\allowdisplaybreaks \numberwithin{equation}{section}
\numberwithin{equation}{section}
\newtheorem{theorem}{Theorem}[section]
\newtheorem{proposition}[theorem]{Proposition}
\newtheorem{lemma}[theorem]{Lemma}
\newtheorem*{Yudovich's Theorem}{Yudovich's Theorem}
\theoremstyle{definition}
\newtheorem{definition}[theorem]{Definition}
\theoremstyle{remark}
\newtheorem{remark}[theorem]{Remark}
\begin{document}

\title
[On Arnold-type stability theorems on a sphere]{On Arnold-type stability theorems for the Euler equation on a sphere}

 \author{Daomin Cao, Guodong Wang}
 \address{State Key Laboratory of Mathematical Sciences, Academy of Mathematics and Systems
Science, Chinese Academy of Sciences, Beijing 100190, P.R. China}
\email{dmcao@amt.ac.cn}
\address{School of Mathematical Sciences, Dalian University of Technology, Dalian 116024, P.R. China}
\email{gdw@dlut.edu.cn}


\begin{abstract}
In this paper, we establish three Arnold-type stability theorems   for steady or rotating solutions of the incompressible Euler equation on a sphere. Specifically, we prove that if the stream function of a flow solves a semilinear elliptic equation with a monotone nonlinearity, then, under appropriate conditions, the flow is stable or orbitally stable in the Lyapunov sense. In particular, our theorems apply to degree-2 Rossby-Haurwitz waves. These results are achieved via a variational approach, with the key ingredient being to show that the flows under consideration
satisfy the conditions of two Burton-type stability criteria which are established in this paper. As byproducts, we obtain some sharp rigidity results for solutions of semilinear elliptic equations on a sphere.
\end{abstract}

\maketitle

\section{Introduction}\label{sec1}

The incompressible Euler equation on a sphere is a fundamental model in the study of fluid flows on the surface of a planet, which has attracted the attention of many geophysicists, meteorologists, and mathematicians over the past few decades. Among the many related issues, stability analysis of specific flow patterns is particularly important, as it is crucial for understanding the complex long-time dynamics of fluids on the sphere.

There has been extensive research in the literature on the stability of incompressible Euler flows on the sphere, with much of it focusing on the stability of certain specific flows, particularly the stability of Rossby-Haurwitz waves  and zonal solutions. See \cite{Baines,Benard,Cap,Hos,Hos2,Lorenz,Skiba0,Skiba1,Skiba2,T}.  Recently, Constantin and Germain proved a general Arnold-type stability theorem (cf. Theorem 5 in \cite{CG}) concerning steady Euler flows on the sphere, indicating noticeable differences between Arnold's stability theorems on the sphere and those in planar Euclidean domains.
Shortly thereafter, an alternative proof for that result of Constantin and Germain  was presented in \cite{CWZ} (cf. Theorem 5.7 therein).
The purpose of this paper is to provide a more thorough investigation of Arnold's stability theorems (including both the first and the second) for the incompressible Euler equation on the sphere, using a more concise and novel approach.

Arnold's stability theorems, proposed by Arnold \cite{A1,A2} in the 1960s, including the first stability theorem and the second stability theorem,  are two general theorems for determining the Lyapunov stability of ideal fluids in two-dimensional Euclidean domains. Although Arnold's stability theorems have different formulations for different domains, they can be roughly summarized as follows (cf. \cite{A3} or \cite{MP}): If the stream function $\psi$ and the vorticity $\omega$ of a steady flow satisfy
\begin{equation}\label{ar1cc}
-c_1< \frac{\nabla\omega}{\nabla\psi}< -c_2
\end{equation}
for some $c_1,c_2 > 0$, then the flow is stable (the first stability theorem);
 If  $\psi$ and $\omega$  satisfy
\begin{equation}\label{ar2cc}
c_3< \frac{\nabla\omega}{\nabla\psi}< c_4
\end{equation}
for some \emph{sufficiently small} $c_3, c_4 >0,$ then stability still holds (the second stability theorem). Note that the steady condition ensures that $\nabla\omega$ and $\nabla\psi$ are parallel, so their ratio is meaningful. For example, for planar bounded domains, we can take $c_4=\lambda_1,$  the first eigenvalue of $-\Delta$ with zero boundary condition. If the planar domain possesses certain symmetry, such as rotational or translational symmetry, then the conditions \eqref{ar1cc} and \eqref{ar2cc} can be appropriately relaxed. See \S 3.3 in \cite{MP}.
The classical stability theorems of  Arnold in Euclidean domains are proved via the energy-Casimir (EC) functional method. Specifically, one needs to construct a suitable EC functional using conserved quantities of the Euler equation and show that the EC functional is  negative-definite  or positive-definite   under the conditions  \eqref{ar1cc}  or  \eqref{ar2cc}. However, this method has some limitations. For example,  it cannot handle the endpoint case ${\nabla\omega}/{\nabla\psi}\leq \lambda_1$ in the second stability theorem for  planar bounded domains.
To handle the endpoint case, one needs  to employ a compactness argument. This has been implemented in \cite{WGu1,WZ}, where Burton's rearrangement theory \cite{BMA,BHP} and Wolansky-Ghil's supporting functional method \cite{WG1,WG2} play an important role.

For the case of the sphere, the situation is very different. First, the sphere exhibits rich symmetries, leading to some instances where Arnold-type stable solutions are actually trivial (cf. Theorem 4 in \cite{CG}).  Second, compared to the case of planar bounded domains,  the Euler equation  on the sphere possesses additional conserved quantities, allowing the endpoint value to be elevated from the first eigenvalue $\lambda_1$ to the second eigenvalue $\lambda_2=6$ (cf.  Theorem 5 in \cite{CG}).  Third, if one considers a rotating sphere, which is physically meaningful, then some difficulties may also arise in the analysis. Due to these differences, it is challenging to establish general stability theorems on the sphere.

In this paper, we establish three Arnold-type stability theorems for steady or rotating Euler flows on the sphere (Theorems \ref{a1s}, \ref{a2s2} and \ref{a2s3} in Section \ref{sec2}), with particular emphasis on the endpoint case. Note that a specific example of the endpoint case is degree-2 Rossby-Haurwitz waves \cite{Ross,Haur}, the stability of which has long been an open problem. This problem was recently solved in \cite{CWZ}.
To achieve these results, we first prove two general stability criteria (Theorems \ref{bsc1} and \ref{bsc2} in Section \ref{sec3}), which  establish  a new variational framework for studying   stability issues on the sphere. Then we show that the  Euler flows under consideration satisfy the conditions of our stability criteria,   and thus possess stability. In this process, we actually obtain variational characterizations  of these Arnold-type flows in terms of conserved quantities of the Euler equation, which in combination with symmetry considerations lead some sharp rigidity results (Theorem \ref{rgr} in Section \ref{sec2}).

 Our stability criteria are inspired by Burton's work \cite{BAR}, where some general stability results were established for steady Euler flows in a bounded simply-connected domain. Specifically, Burton proved that if the vorticity of a steady flow is a global minimizer or an isolated local maximizer of the kinetic energy relative to some rearrangement class, then the steady flow is stable.
For the case of flows on a sphere, due to extra conservation laws, one can expect better stability criteria.  Partial work in this respect was done in \cite{CWZ}. Therein, some conserved functionals were constructed by appropriately combining  conserved quantities of the Euler equation on the sphere, and the minimization/maximization problem of these functionals  relative to some rearrangement class was analyzed, yielding  some Arnold-type stability theorems. However, the method in  \cite{CWZ} is intricate to implement, as the  combination of conserved quantities could be complicated, making compactness hard to verify and the endpoint case challenging to handle. In this paper, we instead incorporate the extra conserved quantities (some linear functionals) into the constraint set (rearrangement class), which is more concise and natural.

 It is worth mentioning that  Burton-type criteria work for the rearrangement class  of any $L^p$ function, where $1<p<\infty$, thus yielding stability under the $L^p$ norm of vorticity; whereas Arnold's method relies on the positive definiteness condition of the EC functional and needs to be conducted in the $L^2$ space of vorticity, therefore only achieving stability under the $L^2$ norm of vorticity.
In addition, Burton's method can handle the stability of nonsmooth Euler flows \cite{CWCV,CWN,Wang}, to which Arnold’s method generally cannot be applied.

 This paper is organized as follows. In Section \ref{sec2}, we present the mathematical setting of this paper and state our main results. In Section \ref{sec3}, we prove two Burton-type stability criteria with a finite number of linear constraints that will be used in subsequent sections. In Section  \ref{sc4}, based on the two stability criteria, we provide  proofs of our stability theorems.  Section \ref{sc5} is dedicated to the proof of our rigidity theorem, while Section \ref{sc6} briefly discusses the case of a rotating sphere.

\section{Euler equation on a sphere and main results}\label{sec2}

\subsection{Euler equation on a sphere}
Denote by  $\mathbb S^2$  the unit 2-sphere in $\mathbb R^3$ centered at the origin,
\[\mathbb S^2=\{\mathbf x=(x_1,x_2,x_3)\in\mathbb R^3\mid x_1^2+x_2^2+x_3^2=1\}.\]
The metric on  $\mathbb S^2$ is  the standard metric induced by the ambient space $\mathbb{R}^3$. Consider the motion of an ideal
fluid of unit density on $\mathbb S^2$. In terms of the velocity field $\mathbf v$ and the scalar pressure $P$, the governing equation is the following incompressible Euler equation:
 \begin{equation}\label{eupf}
  \begin{cases}
    \partial_t\mathbf v+\nabla_{\mathbf v}\mathbf v=2\Omega x_3\mathbf v^\perp -\nabla P, &  \mathbf x\in\mathbb S^2,\,\,t\in\mathbb R, \\
    {\rm div}\, \mathbf v=0.
  \end{cases}
\end{equation}
where $\nabla_{\mathbf v}$ is the  covariant derivative along $\mathbf v$,
$\nabla$ is the gradient operator,  ${\rm div}$ is the divergence operator,
and
$\mathbf v^\perp(t,\mathbf x)=\mathbf v(t,\mathbf x)\times \mathbf x$, the  \emph{clockwise} rotation through $\pi/2$ of $\mathbf v(t,\mathbf x)$  in $T_{\mathbf x}\mathbb S^2$.
Note that we have assumed that the sphere rotates around the polar axis $\mathbf e_3=(0,0,1)$ with a constant angular speed $\Omega$, and the rotation effect is manifested as the Coriolis force $2\Omega x_3\mathbf v^\perp$. Following the method in \cite{BKM,MB,MP}, global well-posedness of \eqref{eupf} in suitable function spaces is guaranteed. For example, a detailed proof of global well-posedness in $H^s$ for any $s>2$ was established in \cite{T}.

Due to the divergence-free condition, there exists a stream function $\psi$  such that  \[\mathbf v=\nabla^\perp\psi:=(\nabla\psi)^\perp.\] The relative vorticity $\omega$ and the absolute vorticity $\zeta$ are defined by
\[\omega={\rm div}\,\mathbf v^\perp,\quad \zeta=\omega+2\Omega x_3,\]
respectively. The absolute vorticity is a measure of the rotation of fluid particles relative to the sphere, taking into account both the relative vorticity $\omega$ resulting from the internal motion of the fluid and the rotation effect of the sphere.
Note that by definition, it holds necessarily that
\[
 \int_{\mathbb S^2}\zeta \dd{\sigma}=\int_{\mathbb S^2}\omega \dd{\sigma}=0,
\]
where $\dd{\sigma}$ is the area element of $\mathbb S^2$ under the standard metric.
The stream function $\psi$ and the absolute vorticity $\zeta$ satisfy the  Poisson equation
 \begin{equation}\label{s02}
 -\Delta\psi+2\Omega x_3=\zeta,
 \end{equation}
 where $\Delta$ is the Laplace-Beltrami operator on $\mathbb S^2$.
By adding a suitable constant, we can always assume that
 \begin{equation}\label{s03}
 \int_{\mathbb S^2}\psi \dd{\sigma}=0,
 \end{equation}
thus $\psi$ is uniquely determined by $\zeta$,
\begin{equation}\label{s04}
\psi=\mathcal G(\zeta-2\Omega x_3)=\mathcal G\zeta -\Omega x_3,
\end{equation}
where $\mathcal G$ denotes the inverse of $-\Delta$ under the zero mean condition \eqref{s03} (cf. \eqref{dog01} in Section \ref{sec3}).  Here we used the fact that $-\Delta x_3=2x_3.$
 From the above discussion, the velocity field $\mathbf v$, the stream function $\psi$, and the absolute vorticity $\zeta$ are three equivalent quantities in describing the motion of an ideal fluid on $\mathbb S^2.$

This paper is mainly approached from the perspective of the absolute vorticity $\zeta$. In terms of $\zeta,$ the  Euler equation \eqref{eupf} can be written as
\[\partial_t\zeta+  \nabla^\perp(\mathcal G\zeta -\Omega x_3)\cdot\nabla \zeta=0. \tag{$V_\Omega$} \]
By direct calculation, one can verify that
\begin{equation}\label{v0v}
 \mbox{\emph{$\zeta(t,\mathbf x)$ is a solution of $(V_0)$ if and only if  $\zeta(t,\mathsf R^{\mathbf e_3}_{\Omega t}\mathbf x)$ is a solution of $(V_\Omega)$,}}
\end{equation}
where  $\mathsf R^\mathbf p_\theta$ denotes a rotation through an angle $\theta$ around some \emph{unit} vector $\mathbf p$,  which can be explicitly expressed via Rodrigues' rotation formula:
\[\mathsf R^{\mathbf p}_\theta\mathbf x=(\cos\theta)\mathbf x+\sin\theta(\mathbf p\times \mathbf x)+(1-\cos\theta)(\mathbf p\cdot\mathbf x)\mathbf p.\]
Due to \eqref{v0v}, in many cases, we only need to consider the non-rotating equation $(V_0)$.

The Euler equation on $\mathbb S^2$ possesses rich symmetries:
\begin{itemize}
  \item[(S1)]  $(V_0)$ is invariant under $\mathbb S\mathbb O(3),$ the three-dimensional rotation group,  i.e., if   $\zeta(t,\mathbf x)$ is a solution of  $(V_0)$, then $\zeta(t,\mathsf R\mathbf x)$ is also a solution of  $(V_0)$ for any $\mathsf R\in\mathbb S\mathbb O(3).$

  \item [(S2)] For general $\Omega\in\mathbb R$, $(V_\Omega)$ is invariant under $\mathbb H$, the subgroup of $\mathbb S\mathbb O(3)$ with $\mathbf e_3$ fixed,
      \begin{equation}\label{dfh1}
       \mathbb H:=\{\mathsf R\in\mathbb S\mathbb O(3)\mid \mathsf R\mathbf e_3=\mathbf e_3\}.
      \end{equation}
      More specifically,
       if $\zeta(t,\mathbf x)$ is a solution of $(V_\Omega)$, then  $\zeta(t,\mathsf H\mathbf x)$  is also a solution of $(V_\Omega)$ for any $\mathsf H\in\mathbb H.$
\end{itemize}

There are three important conservation laws for the Euler equation on $\mathbb S^2$, which will play an essential role in the sequel.
\begin{itemize}
  \item [(C1)]  The kinetic energy of the fluid
  \[\mathcal E=\frac{1}{2}\int_{\mathbb S^2}|\mathbf v|^2\dd{\sigma}\]
  is conserved. Note that by \eqref{s02} and \eqref{s04}, $\mathcal E$ can be expressed in terms of $\zeta$:
  \begin{equation}\label{s05}
   \mathcal E=\frac{1}{2}\int_{\mathbb S^2} \zeta\mathcal G\zeta \dd{\sigma}-\Omega\int_{\mathbb S^2}x_3\zeta \dd{\sigma}+\frac{4}{3}\pi\Omega^2,
  \end{equation}
  \item [(C2)]  The distribution function of $\zeta$ is invariant, i.e., $\zeta(t,\cdot)\in\mathcal R_{\zeta_0}$ for any $t\in\mathbb R,$
      where $\mathcal R_v$ denotes the  rearrangement class of a given measurable function $v$,
 \begin{equation}\label{dor1}
 \mathcal R_{v}:=\left\{ u:\mathbb S^2\mapsto\mathbb R \mid |\{\mathbf  x\in\mathbb S^2\mid u(\mathbf   x)>s\}|=|\{\mathbf   x\in\mathbb S^2\mid v(\mathbf   x)>s\}|\,\,\forall\,s\in\mathbb R\right\},
 \end{equation}
 where $|\cdot|$ is the two-dimensional Hausdorff measure.
   \item [(C3)] If $\Omega=0$, then the position-weighted integral of $\zeta$
\begin{equation}\label{dom1}
\mathbf m(\zeta):=\int_{\mathbb S^2}\mathbf x\zeta(t,\mathbf x) \dd{\sigma}
\end{equation}
  is conserved. In combination with the property \eqref{v0v}, we see that for general $\Omega\in\mathbb R,$
   \[\int_{\mathbb S^2}\mathbf x\zeta(t,\mathsf R^{\mathbf e_3}_{-\Omega t}\mathbf x) \dd{\sigma}=\mathsf R^{\mathbf e_3}_{\Omega t}\mathbf m(\zeta)\]
   is conserved. In particular,
   \[\int_{\mathbb S^2} x_3\zeta(t,\mathbf x)\dd{\sigma},\quad \left(\int_{\mathbb S^2} x_1\zeta(t,\mathbf x) \dd{\sigma}\right)^2+ \left(\int_{\mathbb S^2} x_2\zeta(t,\mathbf x) \dd{\sigma}\right)^2\]
   are both conserved for general $\Omega\in\mathbb R$.
\end{itemize}
As a consequence of (C1) and (C2),
\begin{equation}\label{doe1}
E(\zeta):=\frac{1}{2}\int_{\mathbb S^2}\zeta\mathcal G\zeta \dd{\sigma}
\end{equation}
is a conserved functional. Note that $\mathcal E=E$ if $\Omega=0$.

\subsection{Steady and rotating solution}
It is easy to check that if the stream function $\psi$ satisfies the following semilinear elliptic equation
\begin{equation}\label{s101}
-\Delta \psi+2\Omega x_3=g(\psi),
\end{equation}
then the corresponding flow is steady for $(V_\Omega)$. In terms of the absolute vorticity $\zeta,$ \eqref{s101} becomes
\begin{equation}\label{s141}
\zeta =g(\mathcal G\zeta -\Omega x_3).
\end{equation}

In this paper, we consider the following equation, which is more general than  \eqref{s141}:
\begin{equation}\label{seee}
 \zeta= g(\mathcal G \zeta+\beta\mathbf p\cdot\mathbf x),
\end{equation}
 where $\beta\in\mathbb R$ and $\mathbf p$ is a unit vector.
By direct calculation, one can check that $ \zeta(\mathsf R^{\mathbf p}_{\beta  t}\mathbf x)$ is a  solution of $(V_0)$, which is a steady solution if $\beta=0,$ and is a rotating solution around $\mathbf p$ if $\beta\neq 0$ and $\zeta$ is not
rotationally invariant about $\mathbf p.$
  Furthermore, using the property \eqref{v0v}, we see that
 for general $\Omega\in\mathbb R$, $ \zeta(\mathsf R^{\mathbf p}_{\beta t}\mathsf R^{\mathbf e_3}_{\Omega t}\mathbf x)$ is a  solution of $(V_\Omega)$, which also represents a steady or rotating solution (which may not necessarily have a fixed axis).

Typical examples of steady and rotating solutions arising from  equation \eqref{seee} include the famous Rossby-Haurwitz waves.
Recall that the $j$-th eigenvalue of $-\Delta$ on $\mathbb S^2$ under the zero mean condition \eqref{s03} is $\lambda_j=j(j+1),$  $j\in\mathbb Z_+,$
and  the  eigenspace $\mathbb E_j$ related to $\lambda_j$ has dimension $2j+1$. For example,
\[\lambda_1=2,\quad \mathbb E_1={\rm span}\{x_1,x_2,x_3\},\]
\[\lambda_2=6,\quad \mathbb E_2={\rm span}\{x_1x_2,x_1x_3,x_2x_3,x_1^2-x_2^2,3x_3^2-1\}.\]
In \eqref{seee}, if ${\mathbf p}=\mathbf e_3$ and $ g(s)=j(j+1)s$,
then $\zeta=\alpha x_3+Y$ is a solution of \eqref{seee}, where
 \begin{equation}\label{rhw007}
\quad  Y\in\mathbb E_j,\quad  \alpha:=\begin{cases}
 \frac{2\lambda_j}{2-\lambda_j}\beta&\mbox{if }j\geq 2,\\
 \mbox{arbitrary real number}& \mbox{if }j=1.
 \end{cases}
 \end{equation}
 Therefore
 \begin{equation}\label{rhw0}
 \zeta(\mathsf R^{\mathbf e_3}_{\beta  t}\mathbf x)= \alpha x_3+Y(\mathsf R^{\mathbf e_3}_{\beta  t}\mathbf x),\quad  Y\in\mathbb E_j
 \end{equation}
solves $(V_0),$ and thus
\begin{equation}\label{rhw0}
\zeta(\mathsf R^{\mathbf e_3}_{\beta t}\mathsf R^{\mathbf e_3}_{\Omega t}\mathbf x)= \alpha x_3+Y(\mathbf R^{\mathbf e_3}_{(\beta+\Omega)  t}\mathbf x),\quad  Y\in\mathbb E_j
\end{equation}
solves $(V_\Omega).$ Such solutions are called  degree-$j$ Rossby-Haurwitz waves. It is easy to see that a degree-$j$ Rossby-Haurwitz wave of the form \eqref{rhw0} is steady if and only if $\beta=-\Omega$ or $Y$ is \emph{zonal} (i.e., $Y$ depends only on $x_3$).

\subsection{Main results}

 Now we are ready to state our main results. To make the statements concise and the proofs clear, we will focus only  on the nonrotating case $\Omega=0$. The  rotating case  will be discussed in Section \ref{sc6}.

Define
\begin{equation}\label{lp0}
\mathring L^p(\mathbb S^2):=\left\{v\in L^p(\mathbb S^2)\mid \int_{\mathbb S^2}v \dd{\sigma}=0\right\}.
\end{equation}
Our first result is the following extension of Arnold's first stability theorem.

\begin{theorem}\label{a1s}
Let $1<p<\infty.$ Suppose that $\zeta\in \mathring L^p(\mathbb S^2)$ satisfies
 \begin{equation}\label{nav1}
\zeta= g(\mathcal G\zeta+\beta{\mathbf p}\cdot\mathbf x)\quad{\rm a.e.\, \,on } \,\,\mathbb S^2,
 \end{equation}
for some decreasing\footnote{Namely, $g(s_1)\geq g(s_2)$ as long as $s_1\leq s_2.$} function $g:\mathbb R\to\mathbb R\cup\{\pm \infty\}$, some unit vector  $\mathbf p\in\mathbb R^3$ and some $\beta\in\mathbb R.$
Then $\zeta$  is    $L^p$-stable   under the dynamics of $(V_0)$:  for any   $\varepsilon>0,$ there exists some $\delta>0$, such that for any smooth solution $\zeta(t,\mathbf x)$ of $(V_0)$,  it holds that
     \begin{equation}\label{scmii1}
     \|\zeta(0,\cdot)-\zeta\|_{L^p(\mathbb S^2)}<\delta\quad\Longrightarrow\quad   \|\zeta(t,\cdot)-\zeta\|_{L^p(\mathbb S^2)}<\varepsilon\quad\forall\,t\in\mathbb R.
     \end{equation}

\end{theorem}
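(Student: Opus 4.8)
The plan is to identify the given steady/rotating state, which I denote $\hat\zeta$ (reserving $\zeta(t,\cdot)$ for the solution of $(V_0)$), as the \emph{unique} minimizer of the kinetic energy over a rearrangement class cut out by a single linear constraint, and then to read off $L^p$-stability from the global-minimizer form of the Burton-type criterion (Theorem \ref{bsc1}), whose hypotheses this minimality encodes. Since $\Omega=0$, the conserved quantities available are the energy $E(\zeta)=\frac12\int_{\mathbb S^2}\zeta\mathcal G\zeta\,d\sigma$ (C1), the rearrangement class $\mathcal R_{\hat\zeta}$ (C2), and the momentum $\mathbf m(\zeta)$ (C3); in particular the single linear functional $L(u):=\int_{\mathbb S^2}(\mathbf p\cdot\mathbf x)\,u\,d\sigma=\mathbf p\cdot\mathbf m(u)$ is conserved along $(V_0)$. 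I therefore set
\[
\mathcal K:=\{u\in\mathcal R_{\hat\zeta}\mid L(u)=L(\hat\zeta)\},
\]
which is invariant under the dynamics, and aim to show that $\hat\zeta$ minimizes $E$ on $\mathcal K$ coercively and uniquely.

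The decisive (and conceptually new) step is the variational characterization. Writing $\psi:=\mathcal G\hat\zeta+\beta\,\mathbf p\cdot\mathbf x$, so that $\hat\zeta=g(\psi)$ with $g$ nonincreasing, self-adjointness of $\mathcal G$ gives, for every $u\in\mathcal K$,
\[
E(u)-E(\hat\zeta)=\tfrac12\int_{\mathbb S^2}(u-\hat\zeta)\,\mathcal G(u-\hat\zeta)\,d\sigma+\int_{\mathbb S^2}(u-\hat\zeta)\,\psi\,d\sigma,
\]
where I have replaced $\mathcal G\hat\zeta$ by $\psi$ in the cross term because the contribution of $\beta\,\mathbf p\cdot\mathbf x$ is exactly $\beta\bigl(L(u)-L(\hat\zeta)\bigr)=0$ on $\mathcal K$. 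Two classical facts now finish the characterization. First, the Hardy--Littlewood/Burton rearrangement inequality: since $\hat\zeta=g(\psi)$ with $g$ decreasing, $\hat\zeta$ and $\psi$ are oppositely ordered, so $\int_{\mathbb S^2}\hat\zeta\psi\,d\sigma=\min_{u\in\mathcal R_{\hat\zeta}}\int_{\mathbb S^2}u\psi\,d\sigma$ and the cross term is nonnegative. Second, the positive-definiteness of $\mathcal G=(-\Delta)^{-1}$ on the mean-zero space: as $u-\hat\zeta\in\mathring L^p$, the quadratic term equals $\tfrac12\|u-\hat\zeta\|_{H^{-1}}^2>0$ unless $u=\hat\zeta$. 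Hence $E(u)-E(\hat\zeta)\ge\tfrac12\|u-\hat\zeta\|_{H^{-1}}^2$ on $\mathcal K$, so $\hat\zeta$ is the unique global minimizer with a coercive lower bound. I emphasize that the uniqueness is driven by the $\mathcal G$-term, not by the rearrangement term, so it persists even when $g$ is only non-strictly decreasing (as permitted by the footnote); as a byproduct this forces $\hat\zeta$ to be rotationally symmetric about $\mathbf p$, which feeds into the rigidity statement (Theorem \ref{rgr}).

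With minimality and coercivity established, the criterion delivers $L^p$-stability, and the mechanism it packages is the following. Arguing by contradiction and using continuity of $t\mapsto\zeta(t,\cdot)$ in $L^p$ for smooth solutions, a failure of stability yields solutions with $\|\zeta_n(0,\cdot)-\hat\zeta\|_{L^p}\to0$ and first-exit times $t_n$ at which $w_n:=\zeta_n(t_n,\cdot)$ satisfies $\|w_n-\hat\zeta\|_{L^p}=\varepsilon_0$; conservation then gives $E(w_n)\to E(\hat\zeta)$, $L(w_n)\to L(\hat\zeta)$, and $w_n$ in a rearrangement class converging to $\mathcal R_{\hat\zeta}$, i.e. an approximate minimizing sequence on $\mathcal K$, whence the coercive bound forces $\|w_n-\hat\zeta\|_{H^{-1}}\to0$. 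The main obstacle — the technical heart absorbed into Theorem \ref{bsc1} — is upgrading this $H^{-1}$ convergence to the $L^p$ convergence that contradicts $\|w_n-\hat\zeta\|_{L^p}=\varepsilon_0$. This is exactly where membership in (a near copy of) the rearrangement class is indispensable: the $w_n$ are bounded in $L^p$ with nearly the distribution of $\hat\zeta$, the compact embedding $L^p(\mathbb S^2)\hookrightarrow H^{-1}(\mathbb S^2)$ identifies their weak-$L^p$ limit with the $H^{-1}$-limit $\hat\zeta$, and uniform convexity of $L^p$ for $1<p<\infty$ together with $\|w_n\|_{L^p}\to\|\hat\zeta\|_{L^p}$ promotes weak to strong convergence. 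I would also invoke the standard approximation lemma that elements of a rearrangement class close to $\hat\zeta$ can be approximated in $L^p$ by genuine members of $\mathcal R_{\hat\zeta}$, so that the approximate constraints become exact before the criterion is applied; verifying these compactness ingredients is the part demanding the most care, while the variational identity above is the clean and decisive input specific to Theorem \ref{a1s}.
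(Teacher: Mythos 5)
Your proposal is correct and takes essentially the same approach as the paper: your variational characterization --- expanding $E(u)-E(\hat\zeta)$, using Burton's rearrangement lemma (an oppositely ordered pair minimizes the linear functional over the rearrangement class) to make the cross term nonnegative, and invoking positive-definiteness of $\mathcal G$ for uniqueness --- is precisely the content of Proposition \ref{ps31}, and the follower/compactness mechanism you sketch (exit times, conservation, approximation by genuine rearrangements, weak compactness plus uniform convexity) is exactly the proof of Theorem \ref{bsc1} combined with Proposition \ref{cpct21}. The only minor deviation is that you impose the single scalar constraint $\mathbf p\cdot\mathbf m(u)=\mathbf p\cdot\mathbf m(\hat\zeta)$ rather than the paper's full vector constraint $\mathcal C_{\hat\zeta}$; since your constraint set contains $\mathcal R_{\hat\zeta}\cap\mathcal C_{\hat\zeta}$, your (slightly stronger) minimality conclusion implies the hypothesis of Theorem \ref{bsc1} and the criterion applies verbatim.
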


As a straightforward consequence of Theorem \ref{a1s}, any $Y\in \mathbb E_1$ is   $L^p$-stable   under the dynamics of $(V_0)$.
In fact, write
$Y=\alpha\mathbf q\cdot\mathbf x$, where $\alpha\in\mathbb R$ and  $\mathbf q$ is a unit vector. Then $Y$ solves \eqref{nav1} with
\[g(s)=-2s,\quad \mathbf p=\mathbf q,\quad \beta=- \alpha.\]

Our second result deals with the endpoint case of Arnold's second stability theorem. Recall the definition \eqref{dor1} of the  rearrangement class of a given  measurable function. Define
\[ \zeta+\mathbb E_2=\{\zeta+Y\mid Y\in \mathbb E_2\}.\]

\begin{theorem}\label{a2s2}
Let $1<p<\infty.$  Suppose that $\zeta\in \mathring L^p(\mathbb S^2)$ satisfies
 \begin{equation}\label{nbv2}
\zeta= g(\mathcal G\zeta+\beta{\mathbf p}\cdot\mathbf x)\quad{\rm a.e.\, \,on } \,\,\mathbb S^2
 \end{equation}
 for some $g\in C^1(\mathbb R)$, some unit vector  $\mathbf p\in\mathbb R^3$ and some $\beta\in\mathbb R$. Suppose that $g$ satisfies
 \[ 0\leq  g'(s)\leq 6\quad \forall\,s\in\mathbb R.\]
Then $\zeta$ is orbitally stable  under the dynamics of $(V_0)$ in the following sense: for any $\varepsilon>0,$ there exists some $\delta>0$, such that for any smooth solution $\zeta(t,\mathbf x)$ of $(V_0)$,  if
  \[\|\zeta(0,\cdot)-\zeta\|_{L^p(\mathbb S^2)}<\delta,\]
  then  for any $t\in\mathbb R$, there exists some  $\eta_t\in (\zeta+\mathbb E_2)\cap \mathcal R_{\zeta}$ such that
  \[\| \zeta(t,\cdot)-\eta_t\|_{L^p(\mathbb S^2)}<\varepsilon.\]
     Moreover, if   additionally
     \begin{equation}\label{xy6}
     g'(s)<6\quad \forall\,s\in\mathbb R,
     \end{equation} then
     $(\zeta+\mathbb E_2)\cap \mathcal R_{\zeta}=\{\zeta\},$
      and thus $\zeta$   is  $L^p$-stable  under the dynamics of $(V_0)$ in this case.
\end{theorem}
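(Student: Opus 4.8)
The plan is to recast the statement as a constrained variational problem and then invoke the maximization-type stability criterion (Theorem \ref{bsc2}). First I would record a regularity remark: since $g'$ is bounded, $g$ is Lipschitz, and since $\mathcal{G}\zeta\in W^{2,p}\hookrightarrow C^0(\mathbb S^2)$ for $p>1$, the effective stream function $\Psi:=\mathcal G\zeta+\beta\mathbf p\cdot\mathbf x$ is continuous and bounded; hence $\zeta=g(\Psi)\in L^\infty(\mathbb S^2)\subset L^2(\mathbb S^2)$, which justifies all the $L^2$-based computations below. The key observation is that the conserved components $\int_{\mathbb S^2}x_i\zeta\,d\sigma$ ($i=1,2,3$) from (C3) fix precisely the projection of the vorticity onto $\mathbb E_1$. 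I therefore introduce the constraint set
\[\mathcal K:=\mathcal R_\zeta\cap\Big\{v:\int_{\mathbb S^2}x_i v\,d\sigma=\int_{\mathbb S^2}x_i\zeta\,d\sigma,\ i=1,2,3\Big\},\]
and I claim $\zeta$ is a maximizer of the conserved energy $E$ over $\mathcal K$; the term $\beta\mathbf p\cdot\mathbf x$ in \eqref{nbv2} is exactly the Lagrange multiplier associated with these three linear constraints.

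To prove the maximizer property, for $v\in\mathcal K$ set $h:=v-\zeta$, and denote by $P_j$ the $L^2$-orthogonal projection onto $\mathbb E_j$, so that $P_1 h=0$ (equal $\mathbb E_1$-projections) and $E(v)-E(\zeta)=\int_{\mathbb S^2}h\,\mathcal G\zeta\,d\sigma+\tfrac12\int_{\mathbb S^2}h\,\mathcal G h\,d\sigma$. The quadratic term is controlled by the spectral gap at $\lambda_2=6$: expanding in eigenfunctions and using $P_1h=0$ gives $\tfrac12\int_{\mathbb S^2}h\,\mathcal G h\,d\sigma\le\tfrac{1}{2\lambda_2}\|h\|_{L^2}^2=\tfrac1{12}\|h\|_{L^2}^2$, with equality iff $h\in\mathbb E_2$. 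For the remaining term I would exploit the monotonicity and the Lipschitz bound on $g$ through convex duality. Writing $G(s):=\int_0^s g(\tau)\,d\tau$ (convex, $0\le G''\le 6$) and $G^*$ for its Legendre conjugate, the bound $g'\le 6$ makes $G^*$ $\tfrac16$-strongly convex, and $\zeta=g(\Psi)$ is equivalent to $\Psi\in\partial G^*(\zeta)$; hence the pointwise inequality
\[G^*(v)-G^*(\zeta)-\Psi\,(v-\zeta)\ge\tfrac{1}{12}(v-\zeta)^2\qquad\text{on }\mathbb S^2.\]
Integrating, using $\int_{\mathbb S^2}G^*(v)\,d\sigma=\int_{\mathbb S^2}G^*(\zeta)\,d\sigma$ (as $v\in\mathcal R_\zeta$) and $\int_{\mathbb S^2}h\,\Psi\,d\sigma=\int_{\mathbb S^2}h\,\mathcal G\zeta\,d\sigma$ (the $\beta\mathbf p\cdot\mathbf x$ part drops out since $P_1h=0$), yields $\int_{\mathbb S^2}h\,\mathcal G\zeta\,d\sigma\le-\tfrac1{12}\|h\|_{L^2}^2$. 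Adding the two estimates gives $E(v)-E(\zeta)\le 0$, so $\zeta$ maximizes $E$ over $\mathcal K$, and Theorem \ref{bsc2} then delivers orbital stability with respect to the maximizer set $\mathcal M$.

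It remains to identify $\mathcal M=(\zeta+\mathbb E_2)\cap\mathcal R_\zeta$. For the inclusion $\supseteq$, if $\eta=\zeta+Y$ with $Y\in\mathbb E_2$ and $\eta\in\mathcal R_\zeta$, then $\mathcal GY=\tfrac16 Y$ and $\int_{\mathbb S^2}\eta^2\,d\sigma=\int_{\mathbb S^2}\zeta^2\,d\sigma$ force $\int_{\mathbb S^2}\zeta Y\,d\sigma=-\tfrac12\int_{\mathbb S^2}Y^2\,d\sigma$; a direct computation then gives $E(\eta)=E(\zeta)$, and $P_1\eta=P_1\zeta$, so $\eta\in\mathcal M$. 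For $\subseteq$, any maximizer forces equality in both inequalities above, and equality in the quadratic bound already forces $h\in\mathbb E_2$, giving $\mathcal M\subseteq(\zeta+\mathbb E_2)\cap\mathcal R_\zeta$.

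Finally, under the strict condition $g'(s)<6$ I would prove the rigidity $\mathcal M=\{\zeta\}$ directly: then $(G^*)''=1/g'>\tfrac16$ pointwise, so the displayed convexity inequality is \emph{strict} wherever $v\neq\zeta$, and for $\eta=\zeta+Y\in\mathcal M$ the equality case therefore forces $Y\equiv0$. With $\mathcal M=\{\zeta\}$, orbital stability collapses to genuine $L^p$-stability. The main obstacle is the endpoint nature of the argument: the maximization works only because the spectral-gap constant $\tfrac{1}{2\lambda_2}=\tfrac1{12}$ matches \emph{exactly} the strong-convexity modulus $\tfrac12\cdot\tfrac16=\tfrac1{12}$ coming from $g'\le6$, so there is no slack, and the residual degeneracy along $\mathbb E_2$ is genuine and must be absorbed into the orbit; this is precisely what forces the weaker orbital-stability conclusion in the non-strict case and what the dynamical criterion of Theorem \ref{bsc2} is designed to handle.
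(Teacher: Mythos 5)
Your proposal is correct, and it follows the same overall skeleton as the paper: characterize $\zeta$ variationally as a maximizer of $E$ over $\mathcal R_\zeta\cap\mathcal C_\zeta$, identify the full maximizer set as $(\zeta+\mathbb E_2)\cap\mathcal R_\zeta$, and invoke Theorem \ref{bsc2}. Where you differ is in how the key estimate is organized. The paper (Proposition \ref{ps42}) works on the stream-function side: the Legendre inequality for $\hat G$ plus the smoothness bound $G(s+\tau)\le G(s)+g(s)\tau+3\tau^2$ evaluated at $\mathcal G\zeta+\beta\mathbf p\cdot\mathbf x$, closed by the Poincar\'e inequality $\int_{\mathbb S^2}(\mathcal G\varrho)^2d\sigma\le\frac16\int_{\mathbb S^2}\varrho\,\mathcal G\varrho\, d\sigma$. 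You dualize: $g'\le 6$ becomes $\frac16$-strong convexity of $G^*$ on the vorticity side, giving $\int_{\mathbb S^2}h\,\mathcal G\zeta\, d\sigma\le-\frac1{12}\|h\|_{L^2}^2$, paired with the spectral-gap bound $\frac12\int_{\mathbb S^2}h\,\mathcal Gh\, d\sigma\le\frac1{12}\|h\|_{L^2}^2$. The two are equivalent, but your form makes the exact cancellation at $\lambda_2=6$ transparent and pays off twice: the inclusion $(\zeta+\mathbb E_2)\cap\mathcal R_\zeta\subseteq\mathcal M$ reduces to a two-line $L^2$ identity (versus the paper's chain of equalities verifying \eqref{clm01}), and the rigidity under \eqref{xy6} follows from pointwise strictness of the convexity inequality, whereas the paper first derives the Euler--Lagrange equation $\zeta+\varrho=g(\mathcal G(\zeta+\varrho)+\beta\mathbf p\cdot\mathbf x)$ and runs a Lipschitz estimate (stated there with $\sup_{\mathbb R}g'<6$; your argument uses only the pointwise hypothesis $g'(s)<6$, matching the theorem as stated).

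Two points to tighten. First, the justification ``$(G^*)''=1/g'>\frac16$'' is not literally valid where $g'=0$, since $G^*$ then has a corner; but the strict excess in the strong-convexity inequality survives there, as one sees from $G^*(y_2)-G^*(y_1)-s_1(y_2-y_1)=\int_{s_1}^{s_2}\bigl(g(s_2)-g(\tau)\bigr)d\tau$ together with $g(\tau)-g(s_1)<6(\tau-s_1)$ for $\tau>s_1$. (Relatedly, all evaluations of $G^*$ occur at values of $\zeta$, hence in $g([m_1,m_2])$ with $m_1,m_2$ the extrema of $\mathcal G\zeta+\beta\mathbf p\cdot\mathbf x$, where $G^*$ is finite --- this replaces the paper's device of redefining $g$ outside $[m_1,m_2]$.) Second, Theorem \ref{bsc2} also requires $\mathcal M$ to be nonempty and compact in $L^p(\mathbb S^2)$; you never check compactness, though it is immediate since $\mathcal M$ is a closed, bounded subset of the finite-dimensional affine space $\zeta+\mathbb E_2$.
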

\begin{remark}
Note that by $L^p$ estimate and Sobolev embedding, the function $\zeta$ in Theorem \ref{a2s2}, as well as in  Theorem \ref{a2s3} below, is actually  $C^1.$
\end{remark}

\begin{remark}\label{rkk8}
The conclusion of Theorem \ref{a2s2} can also be stated as follows: the set $(\zeta+\mathbb E_2)\cap \mathcal R_{\zeta}$ is  $L^p$-stable under the dynamics of $(V_0)$, i.e.,  for any $\varepsilon>0,$ there exists some $\delta>0$, such that for any smooth solution $\zeta(t,\mathbf x)$ of $(V_0)$, it holds that
    \[
     \inf_{v\in(\zeta+\mathbb E_2)\cap \mathcal R_{\zeta} }\|\zeta(0,\cdot)-v\|_{L^p(\mathbb S^2)}<\delta\quad\Longrightarrow\quad  \inf_{v\in(\zeta+\mathbb E_2)\cap \mathcal R_{\zeta} }\|\zeta(t,\cdot)-v\|_{L^p(\mathbb S^2)}<\varepsilon\quad \forall\,t\in\mathbb R.
   \]
   In combination with a continuity argument, we see that any isolated set  of $(\zeta+\mathbb E_2)\cap \mathcal R_{\zeta}$ is also $L^p$-stable under the dynamics of $(V_0)$. Here a subset $K$ of $(\zeta+\mathbb E_2)\cap \mathcal R_{\zeta}$ is called isolated if there is a positive distance (under the $L^p$ norm)  between $K$ and its  complement in $(\zeta+\mathbb E_2)\cap \mathcal R_{\zeta}$.
\end{remark}

It is an interesting problem to analyze the structure of the set $(\zeta+\mathbb E_2)\cap \mathcal R_{\zeta}$.   A typical example is the degree-2 Rossby-Haurwitz wave
\[\zeta=\alpha x_3+Y,\quad  \alpha\in\mathbb R,\quad 0\not\equiv  Y\in\mathbb E_2,\]
which corresponds to \eqref{nbv2} with
 \[g(s)=6s,\quad \mathbf p=\mathbf e_3,\quad \beta=-\frac{1}{3}\alpha.\]
 It has been proved in \cite{CWZ} that,
 \begin{itemize}
 \item [(1)] if $\alpha=0,$ then the set of all three-dimensional rigid rotations of $\zeta$ is an isolated subset of $(\zeta+\mathbb E_2)\cap \mathcal R_{\zeta}$;
 \item [(2)] if  $\alpha\neq 0,$ then the set of all  three-dimensional rigid rotations of $\zeta$  around the polar axis  is an isolated subset of $(\zeta+\mathbb E_2)\cap \mathcal R_{\zeta}$.
 \end{itemize}
 Using Remark \ref{rkk8}, we deduce that,  under the dynamics of $(V_0)$,   any $Y\in\mathbb E_2$   is $L^p$-stable up to  three-dimensional rigid rotations, and any $\alpha x_3+ Y\in\mathbb E_2$ with $\alpha\neq 0$  is $L^p$-stable up to  three-dimensional rigid rotations  around the polar axis.  It is worth mentioning that such stability results are actually sharp (cf. Section 8 in \cite{CWZ}).
However, for the general case, it seems difficult to study the structure of  $(\zeta+\mathbb E_2)\cap \mathcal R_{\zeta}$ as well as the sharpness of  orbital stability. We leave them as open problems.

In  the 1990s,  Wolansky and Ghil  \cite{WG1,WG2} proved some extensions of Arnold's second stability theorem for planar  bounded domains   using the  supporting functional method, showing that  ${\nabla\omega}/{\nabla\psi}$  being small pointwise in condition \eqref{ar2cc} can be replaced by  ${\nabla\omega}/{\nabla\psi}$ being small ``in an average sense". Our next theorem provides a similar result for the sphere.

\begin{theorem}\label{a2s3}
Let $1<p<\infty.$   Suppose that $\zeta\in \mathring L^p(\mathbb S^2)$ satisfies
 \begin{equation}\label{nbv3}
 \zeta= g(\mathcal G\zeta+\beta{\mathbf p}\cdot\mathbf x)\quad{\rm a.e.\, \,on } \,\,\mathbb S^2
 \end{equation}
 for some $ g\in C^1(\mathbb R)$, some unit vector  $\mathbf p\in\mathbb R^3$ and some $\beta\in\mathbb R$. Suppose that $g$ is increasing and $-\Delta-g'(\mathcal G \zeta+\beta{\mathbf p}\cdot\mathbf x)>0$ on $\mathbb E_1^\perp,$ i.e.,
 \begin{equation}\label{pe1}
 \int_{\mathbb S^2}|\nabla \varphi|^2-g'(\mathcal G \zeta+\beta{\mathbf p}\cdot\mathbf x)\varphi^2 \dd{\sigma} \geq c\int_{\mathbb S^2} \varphi^2 \dd{\sigma} \quad\forall\,\varphi\in \mathring C^\infty(\mathbb S^2),\,\, \mathbf m(\varphi)=\mathbf 0
 \end{equation}
 for some $c>0$, where $\mathring C^\infty(\mathbb S^2)$ is defined as in \eqref{lp0}, i.e.,
 \[
\mathring C^\infty(\mathbb S^2):=\left\{v\in   C^\infty(\mathbb S^2)\mid \int_{\mathbb S^2}v \dd{\sigma}=0\right\},
\]
and $\mathbf m$ is defined in \eqref{dom1}.
 Then  $\zeta$  is    $L^p$-stable  under the dynamics of $(V_0)$ as in \eqref{scmii1}.
\end{theorem}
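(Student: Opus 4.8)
The plan is to verify that $\zeta$ satisfies the hypotheses of the maximizer-type Burton criterion Theorem \ref{bsc2}, i.e. to show that $\zeta$ is a strict local maximizer of the energy $E$ in \eqref{doe1} relative to the constrained set $\mathcal R_\zeta\cap\{v:\mathbf m(v)=\mathbf m(\zeta)\}$, where $\mathbf m$ is as in \eqref{dom1}; since we work in the nonrotating case, all three components of $\mathbf m$ are conserved, so this whole set is invariant under $(V_0)$. The linear constraint does double duty: it realizes the conservation law (C3), and it forces every admissible perturbation $w:=\eta-\zeta$ to satisfy $\int_{\mathbb S^2}w\,d\sigma=0$ together with $\mathbf m(w)=\mathbf 0$, that is $w\in\mathbb E_1^\perp$ --- precisely the class on which the positive-definiteness hypothesis \eqref{pe1} is imposed. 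Writing $\psi:=\mathcal G\zeta+\beta\mathbf p\cdot\mathbf x$, the equation \eqref{nbv3} reads $\zeta=g(\psi)$ with $g$ increasing, which by rearrangement theory says exactly that $\zeta$ maximizes the linear functional $v\mapsto\int_{\mathbb S^2}v\psi\,d\sigma$ over $\mathcal R_\zeta$; thus $\int_{\mathbb S^2}(\eta-\zeta)\psi\,d\sigma\le 0$, and $\zeta$ is a critical point of the constrained problem with $-\beta\mathbf p$ serving as the Lagrange multiplier attached to $\mathbf m$.

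The heart of the matter is the second-order expansion. For admissible $\eta$, a direct computation, using $\mathbf m(w)=\mathbf 0$ to insert the term $\beta\mathbf p\cdot\mathbf x$ at no cost, gives
\[
E(\eta)-E(\zeta)=\tfrac12\int_{\mathbb S^2}w\,\mathcal Gw\,d\sigma+\int_{\mathbb S^2}w\psi\,d\sigma .
\]
To pin down the indefinite second term I would introduce the convex Casimir $C$ with $C'=g^{-1}$; since $\int_{\mathbb S^2}C(\eta)\,d\sigma=\int_{\mathbb S^2}C(\zeta)\,d\sigma$ on the rearrangement class, Taylor-expanding $C$ about $\zeta$ and using $C'(\zeta)=\psi$, $C''(\zeta)=1/g'(\psi)$ yields $\int_{\mathbb S^2}w\psi\,d\sigma=-\tfrac12\int_{\mathbb S^2}w^2/g'(\psi)\,d\sigma+o(\|w\|^2)$. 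Hence to leading order
\[
E(\eta)-E(\zeta)=\tfrac12\left(\int_{\mathbb S^2}w\,\mathcal Gw\,d\sigma-\int_{\mathbb S^2}\frac{w^2}{g'(\psi)}\,d\sigma\right)+o(\|w\|^2),
\]
so everything reduces to proving that this quadratic form is strictly negative on perturbations $w\in\mathbb E_1^\perp$.

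The key step --- and the precise point where the ``average'' hypothesis \eqref{pe1} supersedes the pointwise bound $g'\le 6$ of Theorem \ref{a2s2} --- is a Cauchy--Schwarz duality between the two quadratic forms. Setting $h:=\mathcal Gw\in\mathbb E_1^\perp$, so that $w=-\Delta h$, one has $\int_{\mathbb S^2}w\,\mathcal Gw\,d\sigma=\int_{\mathbb S^2}|\nabla h|^2\,d\sigma$ and $\int_{\mathbb S^2}w^2/g'(\psi)\,d\sigma=\int_{\mathbb S^2}|\Delta h|^2/g'(\psi)\,d\sigma$, and
\[
\int_{\mathbb S^2}|\nabla h|^2\,d\sigma
=\int_{\mathbb S^2}\frac{-\Delta h}{\sqrt{g'(\psi)}}\,\sqrt{g'(\psi)}\,h\,d\sigma
\le\left(\int_{\mathbb S^2}\frac{|\Delta h|^2}{g'(\psi)}\,d\sigma\right)^{1/2}\left(\int_{\mathbb S^2}g'(\psi)h^2\,d\sigma\right)^{1/2}.
\]
Because $h\in\mathbb E_1^\perp$, the hypothesis \eqref{pe1} gives $\int_{\mathbb S^2}g'(\psi)h^2\,d\sigma\le\int_{\mathbb S^2}|\nabla h|^2\,d\sigma-c\int_{\mathbb S^2}h^2\,d\sigma$, strictly smaller than $\int_{\mathbb S^2}|\nabla h|^2\,d\sigma$ for $h\ne 0$. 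Feeding this back forces $\int_{\mathbb S^2}|\nabla h|^2\,d\sigma<\int_{\mathbb S^2}|\Delta h|^2/g'(\psi)\,d\sigma$, i.e. the quadratic form above is strictly negative, with a quantitative gap controlled by $c\,\|h\|_{L^2}^2=c\,\|\mathcal Gw\|_{L^2}^2$. This is the strict local maximality required to apply Theorem \ref{bsc2} and conclude the $L^p$-stability \eqref{scmii1}.

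I expect the genuine obstacles to be analytic rather than algebraic. The coercivity delivered by Cauchy--Schwarz is measured in the \emph{weak} norm $\|\mathcal Gw\|_{L^2}$, which is not comparable to $\|w\|_{L^p}$ in which the remainder $o(\|w\|^2)$ naturally lives; a naive Taylor argument therefore does not close, and the proper route is a normalization/contradiction argument --- assume a sequence $\eta_n\to\zeta$ in the constraint set with $E(\eta_n)\ge E(\zeta)$, rescale $w_n/\|w_n\|$, pass to a weak limit, and contradict the strict negativity just obtained. This compactness is exactly the mechanism that the Burton-type criterion Theorem \ref{bsc2} is designed to encapsulate, so the verification legitimately reduces to the pointwise-in-direction computation above. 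A secondary technical nuisance is the possible degeneracy $g'(\psi)=0$, where $g^{-1}$ and the factor $1/g'(\psi)$ blow up; there one should argue directly with the rearrangement inequality $\int_{\mathbb S^2}(\eta-\zeta)\psi\,d\sigma\le 0$ rather than through the Casimir $C$, the degeneracy only making the energy defect more negative.
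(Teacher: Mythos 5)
Your overall strategy is the same as the paper's: show that $\zeta$ is an isolated local maximizer of $E$ relative to $\mathcal R_\zeta\cap\mathcal C_\zeta$ and invoke Theorem \ref{bsc2} (this is exactly Proposition \ref{ps43}), and your bookkeeping up to first order is correct — the identity $E(\eta)-E(\zeta)=\tfrac12\int_{\mathbb S^2}w\,\mathcal Gw\,d\sigma+\int_{\mathbb S^2}w\psi\,d\sigma$, the role of $\mathbf m(w)=\mathbf 0$, and the Cauchy--Schwarz duality showing the formal Hessian is negative on $\mathbb E_1^\perp$ are all fine. The genuine gap is in the second-order step, and it is one you flag yourself but do not close. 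You Taylor-expand the Casimir $C$ (with $C'=g^{-1}$) in the \emph{vorticity} perturbation $w$; this requires controlling $C''(\zeta+sw)-C''(\zeta)$, i.e.\ pointwise smallness of $w$, which perturbations inside a rearrangement class never have (they shuffle the values of $\zeta$, so $\|w\|_{L^\infty}$ stays of order the oscillation of $\zeta$ even when $\|w\|_{L^p}$ is tiny). Even granting the expansion, its remainder lives at the scale of $\|w\|_{L^2}^2$, while the coercivity you extract is only $c\|\mathcal Gw\|_{L^2}^2$, which is not bounded below by any power of $\|w\|_{L^p}$; the remainder therefore cannot be absorbed. Your proposed repair --- rescale $w_n/\|w_n\|$, pass to a weak limit, and claim this compactness is ``exactly the mechanism that Theorem \ref{bsc2} is designed to encapsulate'' --- misreads the criterion: Theorem \ref{bsc2} takes isolated local maximality in $L^p$ as a \emph{hypothesis}; its compactness machinery (Lemma \ref{lem21}, Proposition \ref{cpct22}) converts maximality into stability, it does not establish maximality. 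The rescaling argument would have to be run by hand, and it does not close either, since identifying the limit of $\int_{\mathbb S^2}w_n\psi\,d\sigma/\|w_n\|^2$ with $-\tfrac12\int_{\mathbb S^2}\hat w^2/g'(\psi)\,d\sigma$ uses the very Casimir expansion that is in question. Likewise, your fallback for the degenerate set $\{g'(\psi)=0\}$ --- using only the first-order inequality $\int_{\mathbb S^2}w\psi\,d\sigma\le 0$ --- cannot work: that inequality alone is beaten by the positive term $\tfrac12\int_{\mathbb S^2}w\,\mathcal Gw\,d\sigma$.

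The missing idea is the paper's Legendre-transform device (the computation \eqref{js19}, reused verbatim in Proposition \ref{ps43}): instead of expanding a Casimir in $w$, insert the conserved quantity $\int_{\mathbb S^2}\hat G(\zeta+w)-\hat G(\zeta)\,d\sigma=0$, where $\hat G$ is the Legendre transform of $G$ ($G'=g$, after a linear-growth modification of $g$ outside the range of $\psi$), and apply the pointwise Fenchel--Young inequality $\hat G(s)+G(\tau)\ge s\tau$, with equality at $(\zeta,\psi)$ because $\zeta=g(\psi)$. This converts the problematic linear term into quantities whose only Taylor expansion is of $G$ in the \emph{stream-function} increment $\mathcal Gw$ --- which \emph{is} small in $L^\infty$ when $\|w\|_{L^p}$ is small, by the $L^p\to L^\infty$ bound for $\mathcal G$. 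One obtains the exact inequality $E(\zeta)-E(\zeta+w)\ge\tfrac12\int_{\mathbb S^2}w\,\mathcal Gw\,d\sigma-\tfrac12\int_{\mathbb S^2}g'(\psi+\theta\mathcal Gw)(\mathcal Gw)^2\,d\sigma$ with no $o(\|w\|^2)$ remainder at all; then \eqref{pe1} (applied to $\varphi=\mathcal Gw$, which lies in $\mathbb E_1^\perp$) together with uniform continuity of $g'$ gives $E(\zeta)-E(\zeta+w)\ge\tfrac c4\|\mathcal Gw\|_{L^2}^2>0$ for $0<\|w\|_{L^p}<\tau$. The weak-norm coercivity now suffices precisely because there is nothing left to absorb; note also that this route directly produces the quadratic form $\int_{\mathbb S^2}g'(\psi)(\mathcal Gw)^2\,d\sigma$ matching \eqref{pe1}, so your extra Cauchy--Schwarz duality step becomes unnecessary, and Fenchel--Young needs only $g$ increasing, not invertible, so the degeneracy $g'=0$ costs nothing.
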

\begin{remark}
In Theorem \ref{a2s3}, if $g'(s)<6$ for any $s\in\mathbb R,$ then \eqref{pe1} holds. This is a consequence of the following  Poincar\'e inequality:
\[
\int_{\mathbb S^2}\varphi^2 \dd{\sigma}\leq \frac{1}{6}\int_{\mathbb S^2} |\nabla\varphi|^2\dd{\sigma} \quad\forall\,\varphi\in \mathring C^\infty(\mathbb S^2),\,\, \mathbf m(\varphi)=\mathbf 0.
\]

\end{remark}

Our last theorem is concerned with  the rigidity properties of the solutions in Theorems \ref{a1s}, \ref{a2s2} and \ref{a2s3}.

\begin{theorem}\label{rgr}
Let $1<p<\infty.$ Suppose that $\zeta\in \mathring L^p(\mathbb S^2)$ satisfies
 \begin{equation}\label{nav13}
\zeta= g(\mathcal G\zeta+\beta{\mathbf p}\cdot\mathbf x)\quad{\rm a.e.\, \,on } \,\,\mathbb S^2,
 \end{equation}
for some  function $g$, some unit vector  $\mathbf p\in\mathbb R^3$ and some $\beta\in\mathbb R.$
\begin{itemize}
  \item [(i)] Suppose that $g$ satisfies the conditions in Theorem \ref{a1s}, i.e., $g:\mathbb R\to\mathbb R\cup\{\pm \infty\}$ is decreasing. Then $\zeta$ is rotationally invariant around some unit vector $\mathbf q\in\mathbb R^3$, i.e.,
      \[\zeta=\zeta\circ\mathsf R^{\mathbf q}_{\theta}\quad \forall\,\theta\in\mathbb R.\]
      Moreover, we can take $\mathbf q=\mathbf p$ if $\beta\neq 0$.
   \item[(ii)] Suppose that $g$ satisfies the conditions in Theorem \ref{a2s2}, i.e.,  $g\in C^1(\mathbb R)$ and
$  0\leq  g'\leq 6. $
Then $\zeta$ can be decomposed into
\begin{equation}\label{decp}
\zeta=\zeta_e+\zeta_z,
\end{equation}
where  $\zeta_e\in\mathbb E_2,$ and $\zeta_z$ is zonal up to some rigid rotation, i.e.,
there exists some unit vector $\mathbf q\in\mathbb R^3$ such that
\[\zeta_z\circ \mathsf R^{\mathbf q}_{\theta}=\zeta_z\quad \forall\,\theta\in\mathbb R.\]
 Moreover, we can take $\mathbf q=\mathbf p$ if $\beta\neq 0$.
   \item [(iii)] Suppose that $g$ satisfies the conditions in Theorem \ref{a2s3}, i.e.,  $ g\in C^1(\mathbb R)$, $g'\geq0,$  and
 $-\Delta-g'(\mathcal G \zeta+\beta{\mathbf p}\cdot\mathbf x)>0$ on $\mathbb E_1^\perp$. Then $\zeta$ is rotationally invariant around some unit vector $\mathbf q\in\mathbb R^3$.   Moreover, we can take $\mathbf q=\mathbf p$ if $\beta\neq 0$.
\end{itemize}
\end{theorem}

\begin{remark}
In terms of the stream function, Theorem \ref{rgr} provides some rigidity results
for solutions of semilinear elliptic equations on a sphere. More specifically, suppose that $\psi$ satisfies
\[\psi\in \mathring W^{2,p}(\mathbb S^2),\quad -\Delta\psi=g(\psi+\beta\mathbf p\cdot\mathbf x)\quad \mbox{on }\mathbb S^2\]
for some $p\in(1,\infty),$ some  function $g:\mathbb R\mapsto \mathbb R\cup\{\pm\infty\}$, some unit vector  $\mathbf p\in\mathbb R^3$ and some $\beta\in\mathbb R,$ where
 \[\mathring W^{2,p}(\mathbb S^2):=\left\{v\in W^{2,p}(\mathbb S^2)\mid \int_{\mathbb S^2}v \dd{\sigma} =0\right\}.\]
Then the following assertions hold:
\begin{itemize}
\item[(i)] If $g$ is decreasing, then $\psi$ is rotationally invariant around some unit vector $\mathbf q\in\mathbb R^3$,  and, moreover, we can take $\mathbf q=\mathbf p$ if $\beta\neq 0$.
\item[(ii)] If  $g\in C^1(\mathbb R)$ and
$  0\leq  g'\leq 6,$
then   
$\psi=\psi_e+\psi_z$ for some $\psi_e\in\mathbb E_2$ and some $\psi_z$ that is zonal about some unit vector $\mathbf q$. Moreover, we can take $\mathbf q=\mathbf p$ if $\beta\neq 0$.
\item[(iii)]  If $ g\in C^1(\mathbb R)$, $g'\geq0,$  and
 $-\Delta-g'(\psi+\beta{\mathbf p}\cdot\mathbf x)>0$ on $\mathbb E_1^\perp$, then $\psi$ is rotationally invariant around some unit vector $\mathbf q\in\mathbb R^3$, and, moreover, we can take $\mathbf q=\mathbf p$ if $\beta\neq 0$.
\end{itemize}
\end{remark}

\begin{remark}
Theorem \ref{rgr} provides some extensions of \cite[Proposition 1.1]{CDG}, which asserts that $\zeta$ must be identically zero if $ g\in C^1(\mathbb R)$ and $g'<2.$
\end{remark}

It is worth mentioning  that the rigidity results in Theorem \ref{rgr} are actually sharp, as can be clearly seen from the examples of degree-1 and degree-2 Rossby-Haurwitz waves.

\section{Burton-type stability criteria with linear constraints}\label{sec3}

\subsection{Burton-type stability criteria}

 In this section, we establish  a new variational framework for studying the stability of steady or rotating solutions of the Euler equation on a sphere by proving two Burton-type stability criteria.

We begin with some geometric and functional properties about rearrangements that will be used in the sequel.
Throughout this section, let $1<p<\infty$ be fixed. For $v\in L^p(\mathbb S^2)$, let   $\mathcal R_v$ be the set of rearrangements of $v$ on $\mathbb S^2$ (cf. \eqref{dor1}).
Denote by $\bar{\mathcal R}_v$ the weak closure of $\mathcal R_v$ in $L^p(\mathbb S^2).$   Define
 \[\mathcal C_{v}:=\left\{u\in L^p(\mathbb S^2)\mid\mathbf m_u=\mathbf m_v\right\}.\]
 where $\mathbf m_v$ is the position-weighted integral of $v$ defined by \eqref{dom1}.
Some important properties about $\mathcal R_v$  are listed as follows.
\begin{itemize}
\item The weak and strong topologies of $L^p(\mathbb S^2)$ coincide on $\mathcal R_v.$ This follows from the fact  that strong convergence is equivalent to weak convergence for any sequence on $\mathcal R_v$, and the fact that $\mathcal R_v$ with the weak topology is metrizable.
    \item $\bar{\mathcal R}_v$ is weakly sequentially compact in $L^p(\mathbb S^2).$ As a corollary, $\bar{\mathcal R}_v\cap\mathcal C_v$ is also weakly sequentially compact in $L^p(\mathbb S^2)$ since $\mathcal C_v$ is weakly closed in $L^p(\mathbb S^2).$
  \item $\bar{\mathcal R}_v$ is convex, and the set of extremum points of $\bar{\mathcal R}_v$ is exactly $\mathcal R_v$. See \cite[Lemmas 2.2 and 2.3]{BHP}.
    \item $\bar{\mathcal R}_v\cap\mathcal C_v$ is convex, and  the set of extremum points of $\bar{\mathcal R}_v\cap\mathcal C_v$ is exactly $\mathcal R_v\cap\mathcal C_v$. See \cite[Lemma 4.4]{BM} or \cite[Lemma 5]{BRy}. In view this fact, we 
    further deduce that the weak closure of $\mathcal R_v\cap\mathcal C_v$ is exactly 
$\bar{\mathcal R}_v\cap\mathcal C_v.$
          \item
          Let  $\mathcal R_1$ and $\mathcal R_2$ be the sets of  rearrangements  of two $L^{p}$ functions  on $\mathbb S^{2}$, respectively. Then for any $u_1\in\mathcal R_1$, there exists $u_2\in \mathcal R_2$ such that
\begin{equation}\label{kd01}
\|u_1-u_2\|_{L^p(\mathbb S^2)}=\inf_{v_1\in\mathcal R_1, v_2\in\mathcal R_2}\|v_1-v_2\|_{L^p(\mathbb S^2)}.
\end{equation}
See \cite[Lemma 2.3]{Bata}.
\end{itemize}

We also introduce some basic facts about the Green operator $\mathcal G.$
Recall that $\mathcal G$ is  the inverse of $-\Delta$ on $\mathbb S^2$ with  zero mean condition,
i.e.,
\begin{equation}\label{dog01}
-\Delta(\mathcal Gv)=v\,\,{\rm on }\,\,\mathbb S^2,\quad \int_{\mathbb S^2}\mathcal Gv \dd{\sigma}=0.
\end{equation}
The following assertions can be found in Section 3 of \cite{CWZ}:
\begin{itemize}
\item $\mathcal G$ is a bounded linear operator from $\mathring L^p(\mathbb S^2)$ onto $\mathring W^{2,p}(\mathbb S^2)$.
   \item $\mathcal G$ is  a compact linear operator from $\mathring L^p(\mathbb S^2)$ into $\mathring L^r(\mathbb S^2)$ for any $1\leq r\leq \infty.$
    \item $\mathcal G$ is symmetric and positive-definite in $\mathring L^p(\mathbb S^2)$, i.e.,
        \begin{equation}\label{smty01}
        \int_{\mathbb S^2}u\mathcal Gv\dd{\sigma}=\int_{\mathbb S^2}v\mathcal Gu\dd{\sigma}\quad\forall\,u,v\in \mathring L^p(\mathbb S^2),
        \end{equation}
          \begin{equation}\label{podf01}
          \int_{\mathbb S^2}u\mathcal Gu\dd{\sigma}\geq 0  \quad\forall\,u\in \mathring L^p(\mathbb S^2),
          \end{equation}
        and the equality in \eqref{podf01} holds if and only if $u\equiv 0.$
\end{itemize}

\begin{definition}\label{dam01}
\emph{
  Let $1<p<\infty.$ An admissible map is a map  $\zeta:\mathbb R\mapsto \mathring L^p(\mathbb S^2)$ such that
  \begin{itemize}
    \item  [(i)] $E(\zeta(t))=E(\zeta(0))$ for any $t\in\mathbb R,$ where $E$ is defined by \eqref{doe1},
    \item  [(ii)]$ \zeta(t)\in \mathcal R_{\zeta(0)}$ for any $t\in\mathbb R,$   where $\mathcal R$ is defined by \eqref{dor1},
    \item  [(iii)]$\mathbf m( \zeta(t)) =\mathbf m(\zeta(0))$ for any $t\in\mathbb R,$   where $\mathbf m$ is defined by \eqref{dom1}.
  \end{itemize}
  If $\zeta$ is additionally continuous,  it is called a continuous admissible map.}
\end{definition}
Note that any smooth solution $\zeta(t,\cdot)$ of $(V_0)$ is a continuous admissible map.

The main purpose of this section is to prove the following two theorems.
\begin{theorem}[Stability criterion: local minimizer]\label{bsc1}
 Let $1<p<\infty$ and $\zeta\in\mathring L^p(\mathbb S^2)$. Suppose that  $\zeta$  is  a local minimizer of $E$ relative to   ${\mathcal R}_\zeta\cap \mathcal C_\zeta$, i.e.,
  there exists some open neighborhood $U$ of $\zeta$ in $L^p(\mathbb S^2)$ such that
       \begin{equation}\label{etg01}
E(\zeta)\leq   E(v) \quad\forall\,v\in \mathcal R_\zeta\cap\mathcal C_\zeta\cap  U.
       \end{equation}
Then $\zeta$  is $L^p$-stable with respect to  admissible perturbations (cf. Definition \ref{dam01}), i.e., for any $\varepsilon>0,$ there exists some $\delta>0$, such that for any admissible map $\zeta(t)$, it holds that
     \begin{equation}\label{scm4}
     \|\zeta(0)-\zeta\|_{L^p(\mathbb S^2)}<\delta\quad\Longrightarrow\quad   \|\zeta(t)-\zeta\|_{L^p(\mathbb S^2)}<\varepsilon\quad\forall\,t\in\mathbb R.
     \end{equation}
  \end{theorem}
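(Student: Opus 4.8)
The plan is to argue by contradiction, converting the dynamical statement \eqref{scm4} into a static compactness problem on the constraint set $\mathcal R_\zeta\cap\mathcal C_\zeta$ and then exploiting the strict convexity of $E$ furnished by \eqref{podf01}. First I would fix $\varepsilon_0>0$ small enough that the closed ball $\bar B(\zeta,\varepsilon_0)\subset U$ and suppose $\zeta$ fails to be stable at level $\varepsilon_0$. Using that every continuous admissible map (in particular every smooth solution of $(V_0)$) is continuous in $L^p$, I would pass to the first exit time to produce admissible maps with initial data $w_n:=\zeta_n(0)\to\zeta$ and times $t_n$ for which $u_n:=\zeta_n(t_n)$ satisfies $\|u_n-\zeta\|_{L^p(\mathbb S^2)}=\varepsilon_0$. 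By Definition \ref{dam01} one has $u_n\in\mathcal R_{w_n}$, $\mathbf m(u_n)=\mathbf m(w_n)$ and $E(u_n)=E(w_n)$; since $\mathbf m$ (a bounded linear functional, see \eqref{dom1}) and $E$ (see \eqref{doe1}) are continuous on $L^p$ and $w_n\to\zeta$, this yields $E(u_n)\to E(\zeta)$ and $\mathbf m(u_n)\to\mathbf m(\zeta)$.

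Next I would remove the discrepancy between $\mathcal R_{w_n}$ and $\mathcal R_\zeta$. By the nearest-point property \eqref{kd01} there is $\tilde u_n\in\mathcal R_\zeta$ with $\|\tilde u_n-u_n\|_{L^p(\mathbb S^2)}=\mathrm{dist}(\mathcal R_{w_n},\mathcal R_\zeta)\le\|w_n-\zeta\|_{L^p(\mathbb S^2)}\to 0$. Hence $\tilde u_n\in\mathcal R_\zeta$, $\|\tilde u_n-\zeta\|_{L^p(\mathbb S^2)}\to\varepsilon_0$, $\mathbf m(\tilde u_n)\to\mathbf m(\zeta)$ and $E(\tilde u_n)\to E(\zeta)$. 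As $\|\tilde u_n\|_{L^p(\mathbb S^2)}=\|\zeta\|_{L^p(\mathbb S^2)}$ the sequence is bounded, so along a subsequence $\tilde u_n\rightharpoonup u^\ast$ weakly. Because $\mathcal G$ is compact, $E$ is weakly sequentially continuous, and $\mathbf m$ is weakly continuous; hence $u^\ast\in\bar{\mathcal R}_\zeta\cap\mathcal C_\zeta$, $E(u^\ast)=E(\zeta)$, and $\|u^\ast-\zeta\|_{L^p(\mathbb S^2)}\le\liminf\|\tilde u_n-\zeta\|_{L^p(\mathbb S^2)}=\varepsilon_0$, so $u^\ast\in\bar B(\zeta,\varepsilon_0)\subset U$.

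The heart of the argument is to show $u^\ast=\zeta$. The inequality \eqref{podf01} makes $E$ strictly convex, so if $u^\ast\ne\zeta$ then $E((1-t)\zeta+tu^\ast)<E(\zeta)$ for every $t\in(0,1)$; since $\bar{\mathcal R}_\zeta\cap\mathcal C_\zeta$ is convex, this produces points of the weak closure, arbitrarily close to $\zeta$ and inside $U$, with energy strictly below $E(\zeta)$. To contradict the hypothesis I must promote such a point to a genuine rearrangement: minimizing the strictly convex $E$ over the convex, weakly compact set $\bar{\mathcal R}_\zeta\cap\mathcal C_\zeta\cap\bar B(\zeta,r)$ gives a unique minimizer $\zeta_r$, and when $\zeta_r$ lies in the \emph{open} ball its optimality inequality reduces, after introducing a Lagrange multiplier $\mathbf b\cdot\mathbf x$ for the constraint $\mathcal C_\zeta$, to minimizing the linear functional $v\mapsto\int_{\mathbb S^2}(\mathcal G\zeta_r-\mathbf b\cdot\mathbf x)\,v\,d\sigma$ over $\bar{\mathcal R}_\zeta$. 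Invoking that the extreme points of $\bar{\mathcal R}_\zeta\cap\mathcal C_\zeta$ are exactly $\mathcal R_\zeta\cap\mathcal C_\zeta$, this minimizer is a decreasing rearrangement of $\mathcal G\zeta_r-\mathbf b\cdot\mathbf x$, hence lies in $\mathcal R_\zeta\cap\mathcal C_\zeta\cap U$ with energy below $E(\zeta)$ — contradicting local minimality. Thus $u^\ast=\zeta$. Finally, since $\tilde u_n\in\mathcal R_\zeta$ and $\tilde u_n\rightharpoonup\zeta\in\mathcal R_\zeta$, and the weak and strong $L^p$ topologies coincide on $\mathcal R_\zeta$, we obtain $\tilde u_n\to\zeta$ strongly, whence $u_n\to\zeta$, contradicting $\|u_n-\zeta\|_{L^p(\mathbb S^2)}=\varepsilon_0>0$.

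I expect the decisive obstacle to be precisely this transfer of minimality from the non-convex class $\mathcal R_\zeta\cap\mathcal C_\zeta$ to its weak closure. The clean first-order/extreme-point argument applies only when the relaxed minimizer sits in the open ball; ruling out that the constraint $\|v-\zeta\|_{L^p(\mathbb S^2)}=r$ is active — which would contribute an unwanted duality term $|v-\zeta|^{p-2}(v-\zeta)$ and destroy the monotone-rearrangement conclusion — is the delicate point, to be handled by letting $r\to 0$ together with the strict convexity of $E$ and the three linear constraints encoded in $\mathbf m$. A secondary point demanding care is that the conclusion asserts stability to the \emph{point} $\zeta$: this is exactly where one needs $\zeta$ to be isolated among equal-energy elements of $\mathcal R_\zeta\cap\mathcal C_\zeta$, which in the intended applications is supplied by the rotational rigidity of Theorem \ref{rgr}.
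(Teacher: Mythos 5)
Your skeleton (contradiction, followers via \eqref{kd01}, weak sequential compactness, strict convexity of $E$, weak--strong coincidence on $\mathcal R_\zeta$) is the paper's, but the step you yourself call decisive --- promoting the relaxed minimizer to a genuine rearrangement --- is a genuine gap, and the mechanism you propose cannot close it. The extreme-point/linear-functional device is matched to \emph{maximization} of the convex functional $E$ (this is exactly Lemmas \ref{rckd1} and \ref{lem21}, which the paper uses for Theorem \ref{bsc2}); for \emph{minimization} the signs run the wrong way. In detail: the first-order condition says your $\zeta_r$ minimizes $L_{\mathcal G\zeta_r}$ over $\bar{\mathcal R}_\zeta\cap\mathcal C_\zeta$ (after the multiplier removes $\mathbf b\cdot\mathbf x$). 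From this you can extract an extreme point $\eta\in\mathcal R_\zeta\cap\mathcal C_\zeta$ of the face of $L_{\mathcal G\zeta_r}$-minimizers, but then
\begin{equation*}
E(\eta)=E(\zeta_r)+\int_{\mathbb S^2}\mathcal G\zeta_r\,(\eta-\zeta_r)\,d\sigma+E(\eta-\zeta_r)=E(\zeta_r)+E(\eta-\zeta_r)\ \geq\ E(\zeta_r)
\end{equation*}
by \eqref{podf01}: you obtain a rearrangement whose energy is \emph{at least} $E(\zeta_r)$, possibly larger than $E(\zeta)$, and with no control on whether $\eta\in U$; no contradiction with \eqref{etg01} follows. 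The alternative reading of your step --- that $\zeta_r$ \emph{itself} equals the decreasing rearrangement and hence lies in $\mathcal R_\zeta$ --- requires the minimizer of $L_{\mathcal G\zeta_r-\mathbf b\cdot\mathbf x}$ over $\bar{\mathcal R}_\zeta$ to be \emph{unique}, which in Burton's theory demands that $\mathcal G\zeta_r-\mathbf b\cdot\mathbf x$ have no level set of positive measure; nothing in the hypotheses guarantees this. (A minimizer of a strictly convex functional over a convex set is typically not an extreme point, so there is no shortcut here.) This is on top of the active-ball-constraint issue you explicitly defer to ``letting $r\to0$''.

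The paper's resolution is much simpler and purely convex-analytic (Proposition \ref{ps21}): (a) local minimality over $\mathcal R_\zeta\cap\mathcal C_\zeta$ transfers to the weak closure by choosing a weakly open $V$ with $\mathcal R_\zeta\cap U=\mathcal R_\zeta\cap V$ and approximating weakly, using weak continuity of $E$; (b) if $\xi$ denotes the global minimizer of $E$ over the convex, weakly compact set $\bar{\mathcal R}_\zeta\cap\mathcal C_\zeta$, then $\theta\xi+(1-\theta)\zeta\in V$ for small $\theta>0$, and the chain $E(\zeta)\leq E(\theta\xi+(1-\theta)\zeta)\leq\theta E(\xi)+(1-\theta)E(\zeta)\leq E(\zeta)$ forces $\xi=\zeta$ by strict convexity. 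Thus local minimality automatically upgrades to $\zeta$ being the \emph{unique global} minimizer over $\bar{\mathcal R}_\zeta\cap\mathcal C_\zeta$; the compactness statement (Proposition \ref{cpct21}) and your follower construction then finish the proof. This also disposes of your two side concerns: point stability needs no isolation or rigidity input from Theorem \ref{rgr} (uniqueness of the minimizer is what delivers it), and no exit-time argument is needed --- the paper shows that $\|\zeta^n(0)-\zeta\|_{L^p(\mathbb S^2)}\to0$ implies $\|\zeta^n(t_n)-\zeta\|_{L^p(\mathbb S^2)}\to0$ for \emph{arbitrary} times $t_n$. The latter matters because Definition \ref{dam01} does not make admissible maps continuous in $t$ (continuity is assumed only in Theorem \ref{bsc2}), so the first-exit-time step in your proposal is not even available for the class of perturbations the theorem covers.
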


\begin{theorem}[Stability criterion: local maximizers]\label{bsc2}
 Let $1<p<\infty$ and $\bar v\in \mathring L^p(\mathbb S^2)$.  Denote $\mathcal R:=\mathcal R_{\bar v}$ and  $\mathcal C:=\mathcal C_{\bar v}$. Suppose that $\mathcal M\subset\mathcal R\cap\mathcal C$ satisfies
 \begin{itemize}
   \item [(1)] $\mathcal M$ is nonempty and compact in $L^p(\mathbb S^2);$
   \item [(2)] $\mathcal M$ is an isolated set of local maximizers of $E$ relative to $\mathcal R\cap \mathcal C$, i.e., there exists some open neighborhood $U$ of $\mathcal M$ in $L^p(\mathbb S^2)$  such that
       \begin{equation}\label{elv00}
E(u)= E(v):=E|_{\mathcal M} \quad\forall\,u, v\in\mathcal M,
       \end{equation}
       \begin{equation}\label{elv01}
E(u)\leq   E|_{\mathcal M} \quad\forall\,u\in \mathcal R\cap\mathcal C\cap  U,
       \end{equation}
       \begin{equation}\label{elv02}
u\in  {\mathcal R}\cap\mathcal C\cap U,\,\,  E(u)= E|_{\mathcal M} \quad \Longrightarrow \quad u\in\mathcal M.
\end{equation}
 \end{itemize}
  Then  $\mathcal M$  is $L^p$-stable   with respect to continuous admissible perturbations (cf. Definition \ref{dam01}), i.e., for any $\varepsilon>0,$ there exists some $\delta>0$, such that for any continuous  admissible map $\zeta(t)$, it holds that
     \begin{equation}\label{scm4}
     \min_{v\in\mathcal M }\|\zeta(0)-v\|_{L^p(\mathbb S^2)}<\delta\quad\Longrightarrow\quad  \min_{v\in\mathcal M }\|\zeta(t)-v\|_{L^p(\mathbb S^2)}<\varepsilon\quad\forall\,t\in\mathbb R.
     \end{equation}
  \end{theorem}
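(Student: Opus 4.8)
The plan is to argue by contradiction, combining the conservation laws encoded in Definition \ref{dam01} with the weak compactness furnished by reflexivity of $L^p(\mathbb S^2)$. Suppose $\mathcal M$ were not stable in the sense of \eqref{scm4}. Then there would exist $\varepsilon_0>0$, a sequence of continuous admissible maps $\zeta_n(\cdot)$, and times $t_n$ with $\min_{w\in\mathcal M}\|\zeta_n(0)-w\|_{L^p}\to 0$ while $\min_{w\in\mathcal M}\|\zeta_n(t_n)-w\|_{L^p}\ge\varepsilon_0$. Shrinking $\varepsilon_0$ so that the closed $\varepsilon_0$-neighborhood of $\mathcal M$ lies in $U$, and using that $t\mapsto\min_{w\in\mathcal M}\|\zeta_n(t)-w\|_{L^p}$ is continuous (this is exactly where the continuity of the admissible map is needed), the intermediate value theorem produces times $s_n$ with $\min_{w\in\mathcal M}\|\zeta_n(s_n)-w\|_{L^p}=\varepsilon_0$. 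Writing $u_n:=\zeta_n(s_n)$, the three conservation properties give $u_n\in\mathcal R_{\zeta_n(0)}$, $\mathbf m(u_n)=\mathbf m(\zeta_n(0))$, and $E(u_n)=E(\zeta_n(0))$. Since $\mathcal M$ is compact, after passing to a subsequence $\zeta_n(0)\to w_0$ strongly for some $w_0\in\mathcal M$, whence $E(u_n)\to E(w_0)=E|_{\mathcal M}$ and $\mathbf m(u_n)\to\mathbf m(w_0)=\mathbf m(v)$ by strong continuity of $E$ and of $\mathbf m$.

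Next I would extract a weak limit $u_n\rightharpoonup u$ in $L^p$ (the $u_n$ are bounded, being rearrangements of the convergent sequence $\zeta_n(0)$) and identify the constraints it inherits. Since $\zeta_n(0)\to w_0$ strongly with $w_0\in\mathcal R_v$, the classes $\mathcal R_{\zeta_n(0)}$ converge to $\mathcal R_v$, and one obtains $u\in\bar{\mathcal R}_v$ by selecting companions $\hat u_n\in\mathcal R_v$ with $\|u_n-\hat u_n\|_{L^p}\to0$ (using continuity of rearrangements under strong $L^p$ convergence) and passing to the weak limit. Because $\mathbf m$ is weakly continuous and $\mathcal C_v$ is weakly closed, $\mathbf m(u)=\mathbf m(v)$, i.e.\ $u\in\mathcal C_v$. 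For the energy, the compactness of $\mathcal G$ from $\mathring L^p(\mathbb S^2)$ into $L^{p'}(\mathbb S^2)$ renders $E$ weakly sequentially continuous, so $E(u)=\lim E(u_n)=E|_{\mathcal M}$. Finally, pairing each $u_n$ with a nearest point $m_n\in\mathcal M$ (so $\|u_n-m_n\|_{L^p}=\varepsilon_0$), compactness of $\mathcal M$ gives $m_n\to m_*\in\mathcal M$, and weak lower semicontinuity of the norm yields $\mathrm{dist}_{L^p}(u,\mathcal M)\le\|u-m_*\|_{L^p}\le\varepsilon_0$, so $u\in U$. In summary, $u\in\bar{\mathcal R}_v\cap\mathcal C_v\cap U$ with $E(u)=E|_{\mathcal M}$.

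The decisive step is to promote this to strong convergence, equivalently to show that $u$ is a genuine rearrangement, $u\in\mathcal R_v$. Once this is known, the listed property that the weak and strong topologies coincide on $\mathcal R_v$ forces $\hat u_n\to u$, hence $u_n\to u$ strongly and $\mathrm{dist}_{L^p}(u,\mathcal M)=\varepsilon_0>0$; on the other hand $u\in\mathcal R_v\cap\mathcal C_v\cap U$ with $E(u)=E|_{\mathcal M}$, so the isolation hypothesis \eqref{elv02} would give $u\in\mathcal M$, the desired contradiction. To see $u\in\mathcal R_v$, I would exploit the strict convexity of $E$ coming from the positive-definiteness of $\mathcal G$ in \eqref{podf01}: for $a\neq b$ in $\mathring L^p(\mathbb S^2)$ one has $E(\tfrac{a+b}{2})=\tfrac12 E(a)+\tfrac12 E(b)-\tfrac18\int_{\mathbb S^2}(a-b)\mathcal G(a-b)\,d\sigma<\tfrac12 E(a)+\tfrac12 E(b)$. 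Hence any point of the convex, weakly compact set $\bar{\mathcal R}_v\cap\mathcal C_v$ at which $E$ attains a maximum relative to the whole set cannot be a nontrivial convex combination, so it is an extreme point, and by the cited characterization the extreme points of $\bar{\mathcal R}_v\cap\mathcal C_v$ are exactly $\mathcal R_v\cap\mathcal C_v$. It therefore suffices to upgrade the local maximality of $E$ on the rearrangement class $\mathcal R_v\cap\mathcal C_v$ (hypotheses \eqref{elv00}--\eqref{elv02}) to local maximality on its weak closure, and to check that $u$ realizes such a maximum.

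I expect this upgrade to be the main obstacle. The hypotheses \eqref{elv01}--\eqref{elv02} control $E$ only on the extreme points $\mathcal R_v\cap\mathcal C_v$, whereas the variational and compactness machinery naturally lives on the convex weak closure $\bar{\mathcal R}_v\cap\mathcal C_v$; reconciling the strong-topology neighborhood $U$ with the weakly compact convex structure is delicate, precisely because a point of the weak closure need not be strongly approximable by genuine rearrangements. My plan to bridge the gap is to localize by intersecting $\bar{\mathcal R}_v\cap\mathcal C_v$ with a suitable closed ball (which preserves convexity and weak compactness) and to invoke the Bauer maximum principle, so that the maximum of the weakly upper semicontinuous convex functional $E$ over the localized set is attained at an extreme point lying in $\mathcal R_v\cap\mathcal C_v\cap U$, to which \eqref{elv01}--\eqref{elv02} then apply; the extra linear constraint $\mathcal C_v$ and the positive-definiteness of $\mathcal G$ must be used in tandem here. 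This use of strict convexity to force maximizers to be extreme points is specific to the maximizer criterion, and the minimizer criterion of Theorem \ref{bsc1} will require a different treatment of the weak limit.
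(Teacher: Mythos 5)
Your overall architecture coincides with the paper's: the contradiction setup with the intermediate value theorem, the "companion" construction via the attained minimal distance between two rearrangement classes (the paper's \eqref{kd01}), the extraction of a weak limit $u\in\bar{\mathcal R}_v\cap\mathcal C_v\cap U$ with $E(u)=E|_{\mathcal M}$, and the reduction of the whole proof to the single claim $u\in\mathcal R_v$ are exactly the paper's steps (organized there as Proposition \ref{cpct22} plus the final contradiction argument). The gap is at the step you yourself flag as the main obstacle, and your proposed bridge does not work. First, intersecting $\bar{\mathcal R}_v\cap\mathcal C_v$ with a closed ball $B$ destroys the extreme-point structure: since the $L^p$ ball ($1<p<\infty$) is strictly convex, \emph{every} point of $\partial B\cap\bar{\mathcal R}_v\cap\mathcal C_v$ is an extreme point of the intersection, so the Bauer maximum principle may deliver a maximizer sitting on $\partial B$ that is not a rearrangement; the characterization "extreme points $=\mathcal R_v\cap\mathcal C_v$" holds for the unlocalized set (and, as noted below, for its faces), not for its intersection with a ball. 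Second, and prior to that, you cannot transfer hypothesis \eqref{elv01} to the ball-localized set at all: \eqref{elv01} controls $E$ only on genuine rearrangements in $U$, and a point of $\bar{\mathcal R}_v\cap\mathcal C_v\cap B$ is a weak limit of elements of $\mathcal R_v\cap\mathcal C_v$ that may all lie far outside $B$ (weak lower semicontinuity of the norm goes the wrong way), so there is no reason why $E\le E|_{\mathcal M}$ should hold on $\bar{\mathcal R}_v\cap\mathcal C_v\cap B$.

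The paper bridges this gap with two devices missing from your plan. (i) The localization is done with a \emph{weakly} open set: since the weak and strong topologies coincide on $\mathcal R_v$, one can choose a weakly open $V$ with $\mathcal R_v\cap U=\mathcal R_v\cap V$; weakly open localization is compatible with weak approximation, so \eqref{elv01} and \eqref{elv02} do extend to $\bar{\mathcal R}_v\cap\mathcal C_v\cap V$ (Lemma \ref{cxv01}). (ii) To show that a local maximizer $u$ of $E$ on $\bar{\mathcal R}_v\cap\mathcal C_v$ is a rearrangement, instead of Bauer on a localized set one uses a first-order condition: convexity of the constraint set turns local maximality into the \emph{global} linear inequality $\int_{\mathbb S^2}w\,\mathcal Gu\,d\sigma\le\int_{\mathbb S^2}u\,\mathcal Gu\,d\sigma$ for all $w\in\bar{\mathcal R}_v\cap\mathcal C_v$. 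Krein--Milman is then applied not to a ball intersection but to the exposed \emph{face} on which the linear functional $L_{\mathcal Gu}$ attains its supremum; extreme points of a face are extreme points of $\bar{\mathcal R}_v\cap\mathcal C_v$ and hence lie in $\mathcal R_v\cap\mathcal C_v$ (Lemma \ref{rckd1}). Finally, a second-order expansion of $E$ along the segment joining $u$ to the resulting rearrangement $\eta$, together with the positive-definiteness \eqref{podf01} of $\mathcal G$, forces $u=\eta\in\mathcal R_v$ (Lemma \ref{lem21}). Your strict-convexity observation is correct as far as it goes, but it only identifies \emph{global} maximizers as extreme points; it is the face structure---preserved under linear constraints but not under ball constraints---that makes the local-to-global passage work.
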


\subsection{Proof of Theorem \ref{bsc1}}

The following proposition is essential in proving Theorem \ref{bsc1}.

  \begin{proposition}\label{ps21}
 Let $1<p<\infty$ and $\zeta\in\mathring L^p(\mathbb S^2)$.  Then the following three statements  are equivalent:
 \begin{itemize}
   \item [(i)]  $\zeta$ is a local minimizer of $E$ relative to   ${\mathcal R}_\zeta\cap \mathcal C_\zeta$;
          \item [(ii)] $\zeta$ is a local minimizer of $E$ relative to $\bar{\mathcal R}_\zeta\cap \mathcal C_\zeta$;
   \item [(iii)]$\zeta$ is the unique minimizer of $E$ relative to   $\bar{\mathcal R}_\zeta\cap \mathcal C_\zeta$.
 \end{itemize}
\end{proposition}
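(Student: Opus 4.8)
My plan rests on two structural features of the energy \eqref{doe1}. First, since $\mathcal G$ is symmetric and positive-definite \eqref{smty01}--\eqref{podf01}, the quadratic functional $E$ is \emph{strictly convex} on $\mathring L^p(\mathbb S^2)$; indeed for $u\neq w$ in $\mathring L^p(\mathbb S^2)$ one computes $E(\tfrac{u+w}{2})-\tfrac12E(u)-\tfrac12E(w)=-\tfrac18\int_{\mathbb S^2}(u-w)\mathcal G(u-w)\,d\sigma<0$. Second, since $\mathcal G$ is compact, $E$ is \emph{weakly sequentially continuous}: if $w_n\rightharpoonup w$ in $L^p$ then $\mathcal Gw_n\to\mathcal Gw$ strongly in $L^{p'}$, whence $\int_{\mathbb S^2}w_n\mathcal Gw_n\,d\sigma\to\int_{\mathbb S^2}w\mathcal Gw\,d\sigma$. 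Writing $K:=\bar{\mathcal R}_\zeta\cap\mathcal C_\zeta$, which is convex and weakly sequentially compact, these two facts show that $E$ attains on $K$ a \emph{unique} global minimizer, which I denote $\hat\zeta$. The implications (iii)$\Rightarrow$(ii)$\Rightarrow$(i) are then immediate, since $\zeta\in\mathcal R_\zeta\cap\mathcal C_\zeta\subset K$ and $\mathcal R_\zeta\cap\mathcal C_\zeta\subset K$.

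Before attacking the genuine direction I would record the easy half (ii)$\Rightarrow$(iii), which isolates the convexity mechanism. Assume (ii) and suppose $\zeta\neq\hat\zeta$, so that $E(\hat\zeta)<E(\zeta)$. Along the segment $\zeta_t:=(1-t)\zeta+t\hat\zeta\in K$, convexity gives $E(\zeta_t)\le(1-t)E(\zeta)+tE(\hat\zeta)<E(\zeta)$ for every $t\in(0,1]$, while $\|\zeta_t-\zeta\|_{L^p}=t\|\hat\zeta-\zeta\|_{L^p}\to0$. Thus every $L^p$-neighborhood of $\zeta$ meets $K$ in points of strictly smaller energy, contradicting the local minimality (ii). Hence $\zeta=\hat\zeta$ and (iii) holds, so (ii) and (iii) are equivalent.

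The heart of the matter is (i)$\Rightarrow$(iii). Because $E$ is convex and G\^ateaux differentiable with $E'(\zeta)=\mathcal G\zeta$, the tangent inequality $E(w)\ge E(\zeta)+\int_{\mathbb S^2}\mathcal G\zeta\,(w-\zeta)\,d\sigma$ reduces the global minimality of $\zeta$ on $K$ to the \emph{linear} inequality $L(w)\ge L(\zeta)$ for all $w\in K$, where $L(w):=\int_{\mathbb S^2}\mathcal G\zeta\,w\,d\sigma$ (note $\mathcal G\zeta\in L^\infty\subset L^{p'}$, so $L$ is weakly continuous). Since $L$ is weakly continuous and $K$ is weakly compact and convex with extreme-point set exactly $\mathcal R_\zeta\cap\mathcal C_\zeta$, the minimum of $L$ over $K$ is attained at an extreme point, so $\min_K L=\min_{\mathcal R_\zeta\cap\mathcal C_\zeta}L$. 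It therefore suffices to prove that $\zeta$ minimizes $L$ over $\mathcal R_\zeta\cap\mathcal C_\zeta$. Suppose not: there is $w^\ast\in\mathcal R_\zeta\cap\mathcal C_\zeta$ with $\int_{\mathbb S^2}\mathcal G\zeta\,(w^\ast-\zeta)\,d\sigma<0$. I would then manufacture, by a Burton-type rearrangement construction, a sequence $w_n\in\mathcal R_\zeta\cap\mathcal C_\zeta$ with $w_n\to\zeta$ strongly and a genuine first-order decrease $L(w_n)-L(\zeta)\le-c\,\|w_n-\zeta\|_{L^p}$ for some fixed $c>0$. Feeding this into the exact identity $E(w_n)-E(\zeta)=\big(L(w_n)-L(\zeta)\big)+E(w_n-\zeta)$ together with the quadratic bound $E(w_n-\zeta)\le C\|w_n-\zeta\|_{L^p}^2$, the linear gain beats the quadratic remainder for large $n$, giving $E(w_n)<E(\zeta)$ with $w_n\in\mathcal R_\zeta\cap\mathcal C_\zeta$ arbitrarily close to $\zeta$, contradicting (i). Hence $\zeta$ minimizes $L$ on $\mathcal R_\zeta\cap\mathcal C_\zeta$, thus on $K$, and by strict convexity $\zeta=\hat\zeta$ is the unique minimizer of $E$ on $K$, i.e. (iii).

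The main obstacle is exactly the construction of the descent sequence $w_n$, and the non-discreteness of $\mathcal R_\zeta$ is what makes it possible (a purely extreme-point argument cannot work, as the finite-dimensional permutation picture shows). The unconstrained mechanism is classical in Burton's theory: where $\zeta$ and $\mathcal G\zeta$ fail to be oppositely ordered on sets of positive measure, exchanging the values of $\zeta$ on small equal-measure subsets strictly lowers $L$ by an amount comparable to the $L^p$-size of the exchange, producing the required linear-rate descent while keeping $w_n$ in $\mathcal R_\zeta$. The genuine difficulty here is the constraint $\mathcal C_\zeta$: each exchange perturbs the moment $\int_{\mathbb S^2}\mathbf x\,w_n\,d\sigma$, and one must cancel this three-dimensional drift by auxiliary compensating exchanges without destroying the net decrease of $L$. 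Verifying that such moment-preserving rearrangements can always be arranged, so that the descent takes place genuinely inside $\mathcal R_\zeta\cap\mathcal C_\zeta$, is the technical crux, which I would establish by adapting the constrained-rearrangement arguments of \cite{BMA,BHP,BM,BRy}.
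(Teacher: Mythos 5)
Your preliminary reductions are sound: strict convexity and weak sequential continuity of $E$, the existence of a unique global minimizer $\hat\zeta$ of $E$ on $K=\bar{\mathcal R}_\zeta\cap\mathcal C_\zeta$, the implications (iii)$\Rightarrow$(ii)$\Rightarrow$(i), and your segment argument for (ii)$\Rightarrow$(iii) are all correct (the last is in fact cleaner than the paper's version of that step, which routes through a weakly open neighborhood). The problem is (i)$\Rightarrow$(iii), which is the entire content of the proposition, and there your proof is not a proof: the existence of a descent sequence $w_n\in\mathcal R_\zeta\cap\mathcal C_\zeta$ with $w_n\to\zeta$ strongly and $L(w_n)-L(\zeta)\le -c\|w_n-\zeta\|_{L^p}$ is exactly the hard statement, and you defer it to an unspecified ``adaptation'' of Burton's unconstrained exchange arguments. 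Moreover, the quantitative form you announce is not what exchange constructions deliver: swapping the values of $\zeta$ on two disjoint sets of measure $\varepsilon$ changes $L$ by $O(\varepsilon)$ but changes the $L^p$ norm by $O(\varepsilon^{1/p})$, so the decrease of $L$ is of order $\|w_n-\zeta\|_{L^p}^{p}$, not $\|w_n-\zeta\|_{L^p}$; for $p\ge 2$ this is no longer formally dominated by your generic quadratic bound $E(w_n-\zeta)\le C\|w_n-\zeta\|_{L^p}^{2}$, so even the shape of the inequality you feed into the expansion of $E$ must be redone (one has to compare both terms in the measure parameter $\varepsilon$, using that $E(w_n-\zeta)=o(\varepsilon)$ for such concentrated perturbations). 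On top of this, the three moment constraints must be restored after each exchange by compensating exchanges whose effect on $L$ does not destroy the gain; nothing in the cited references does this for you, and you give no argument.

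The paper avoids this machinery entirely, and it is worth seeing how, because it shows the exchange construction is unnecessary. Its hard step is (i)$\Rightarrow$(ii), proved ``softly'': since the weak and strong $L^p$ topologies coincide on $\mathcal R_\zeta$, the strong neighborhood $U$ in (i) can be replaced by a weakly open set $V$ with $\mathcal R_\zeta\cap U=\mathcal R_\zeta\cap V$; then any $v\in\bar{\mathcal R}_\zeta\cap\mathcal C_\zeta\cap V$ is a weak limit of a sequence $v_n\in\mathcal R_\zeta\cap\mathcal C_\zeta$, which lies in $V$ eventually because $V$ is weakly open, and weak continuity of $E$ gives $E(\zeta)\le\lim_{n\to\infty}E(v_n)=E(v)$. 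After that, convexity of $\bar{\mathcal R}_\zeta\cap\mathcal C_\zeta$ and strict convexity of $E$ globalize the minimality exactly as in your segment argument. So to complete your plan you must either genuinely carry out the constrained exchange construction (a substantial piece of work, and the crux you left open), or replace your (i)$\Rightarrow$(iii) step by this topological argument, whose only nontrivial input beyond the listed properties of $\mathcal R_\zeta$ is the weak density of $\mathcal R_\zeta\cap\mathcal C_\zeta$ in $\bar{\mathcal R}_\zeta\cap\mathcal C_\zeta$.
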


\begin{proof}
  First we prove (i)$\Rightarrow$(ii).
Let $U$ be an open neighborhood of $\zeta$ in $L^p(\mathbb S^2)$ such that
       \begin{equation}\label{etg001}
E(\zeta)\leq   E(v) \quad\forall\,v\in \mathcal R_\zeta\cap\mathcal C_\zeta\cap  U.
       \end{equation}
  Since the weak and strong topologies of $L^p(\mathbb S^2)$ coincide on $\mathcal R_\zeta,$ there exists some weakly open set $V$ in $L^p(\mathbb S^2)$ such that $\mathcal R_\zeta\cap U=\mathcal R_\zeta\cap V.$ Hence by \eqref{etg001},
         \begin{equation}\label{etg002}
E(\zeta)\leq   E(v) \quad\forall\,v\in \mathcal R_\zeta\cap\mathcal C_\zeta\cap  V.
       \end{equation}
Now we claim that
\begin{equation}\label{zra01}
E(\zeta)\leq E(v) \quad \forall\,v\in \bar{\mathcal R}_\zeta\cap\mathcal C_\zeta\cap V.
\end{equation}
In fact, for any $v\in\bar{\mathcal R}_\zeta\cap\mathcal C_\zeta\cap V,$  we can choose a sequence $\{v_n\}\subset \mathcal R_\zeta\cap\mathcal C_\zeta$ such that $v_n\rightharpoonup v$ in $L^p(\mathbb S^2)$. Here and in the sequel, ``$\rightharpoonup$" denotes weak convergence. Since $V$ is weakly open,  it holds that  $v_n\in V$ if $n$ is sufficiently large. Therefore by \eqref{etg002} and the weak continuity of $E$,
\[E(\zeta)\leq \lim_{n\to\infty}E(v_n)=E(v),\]
which verifies \eqref{zra01}.

 Next we prove (ii)$\Rightarrow$(iii). Suppose that  \eqref{zra01} holds. Since $\bar{\mathcal R}_\zeta\cap\mathcal C_\zeta$ is weakly sequentially compact  and $E$ is weakly continuous, there exists a  minimizer of $E$ relative to $\bar{\mathcal R}_\zeta\cap\mathcal C_\zeta$, denoted by  $\xi$. Since
 $\bar{\mathcal R}_\zeta\cap\mathcal C_\zeta$ is a convex set and $E$ is a strictly convex functional,   $\xi$  must be the  unique minimizer of $E$ relative to   $\bar{\mathcal R}_\zeta\cap \mathcal C_\zeta$.
 In particular,
 \begin{equation}\label{tt01}
E(\zeta)\geq  E(\xi).
 \end{equation}
To prove (iii), it suffices to show that $\zeta=\xi$. Since $\bar{\mathcal R}_\zeta\cap\mathcal C_\zeta$ is convex, it holds that  $s\xi+(1-s)\zeta\in  \bar{\mathcal R}_\zeta \cap\mathcal C_\zeta$ for any  $s\in[0,1].$
Let $V$ be the open set in \eqref{zra01}. Fix a sufficiently small positive number $s$ such that
$s\xi+(1-s)\zeta\in  \bar{\mathcal R}_\zeta \cap\mathcal C_\zeta\cap V.$ Then by \eqref{zra01},
 \begin{equation}\label{tt02}
 E(s \xi+(1-s)\zeta)\geq E(\zeta).
 \end{equation}
On the other hand, due to the strict convexity of $E$,
 \begin{equation}\label{tt03}
 E(s \xi+(1-s)\zeta)\leq sE(\xi)+ (1-s)E(\zeta), \end{equation}
and the equality holds if and only if $\xi=\zeta.$
Combining \eqref{tt01}, \eqref{tt02} and \eqref{tt03}, we deduce that $E(\zeta)=E(\xi)$, and thus
$\zeta=\xi.$

Finally, the implication (iii)$\Rightarrow$(i) is obvious.
\end{proof}

With Proposition \ref{ps21} at hand,  we can prove the following compactness result.

    \begin{proposition}[Compactness]\label{cpct21}
 Let $\zeta$ be as in Theorem \ref{bsc1}. Then for any sequence $\{v_n\}\subset\mathcal R_\zeta$ satisfying
 \[\lim_{n\to\infty}E(v_n)=E(\zeta),\quad\lim_{n\to\infty}\mathbf m(v_n)=\mathbf m(\zeta),\]
 it holds that $v_n\to\zeta$ in $L^p(\mathbb S^2)$ as $n\to\infty.$
\end{proposition}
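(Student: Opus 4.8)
The plan is to deduce the compactness from the uniqueness statement in Proposition \ref{ps21}(iii), the weak sequential compactness of $\bar{\mathcal R}_\zeta$, and the coincidence of the weak and strong $L^p$ topologies on $\mathcal R_\zeta$. Since every $v_n$ lies in $\mathcal R_\zeta$, we have $\|v_n\|_{L^p(\mathbb S^2)}=\|\zeta\|_{L^p(\mathbb S^2)}$ for all $n$, so $\{v_n\}$ is bounded and contained in the weakly sequentially compact set $\bar{\mathcal R}_\zeta$. The heart of the argument is therefore to identify every weak subsequential limit of $\{v_n\}$ with $\zeta$, and then to upgrade weak convergence to strong convergence.

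First I would fix an arbitrary subsequence of $\{v_n\}$ and, using weak sequential compactness of $\bar{\mathcal R}_\zeta$, extract a further subsequence (not relabeled) with $v_n\rightharpoonup v$ in $L^p(\mathbb S^2)$ for some $v\in\bar{\mathcal R}_\zeta$. The key point is that the two asymptotic conditions on $\{v_n\}$ pin down $v$ exactly. Since each component of $\mathbf m$ is the pairing of $v_n$ against the bounded function $x_i\in L^{p'}(\mathbb S^2)$, the functional $\mathbf m$ is weakly continuous, so $\mathbf m(v)=\lim_{n\to\infty}\mathbf m(v_n)=\mathbf m(\zeta)$ and hence $v\in\bar{\mathcal R}_\zeta\cap\mathcal C_\zeta$. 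Moreover $E$ is weakly continuous: writing $\int_{\mathbb S^2}v_n\mathcal Gv_n\,d\sigma=\int_{\mathbb S^2}v_n\mathcal Gv\,d\sigma+\int_{\mathbb S^2}v_n\mathcal G(v_n-v)\,d\sigma$, the first term converges to $\int_{\mathbb S^2}v\mathcal Gv\,d\sigma$ by weak convergence of $v_n$ tested against the fixed element $\mathcal Gv\in L^{p'}(\mathbb S^2)$, while the second is bounded by $\|v_n\|_{L^p(\mathbb S^2)}\|\mathcal G(v_n-v)\|_{L^{p'}(\mathbb S^2)}\to 0$ because $\mathcal G$ is compact. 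Thus $E(v)=\lim_{n\to\infty}E(v_n)=E(\zeta)$.

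Now I would invoke Proposition \ref{ps21}: the hypothesis of Theorem \ref{bsc1} says $\zeta$ is a local minimizer of $E$ relative to $\mathcal R_\zeta\cap\mathcal C_\zeta$, which by item (iii) makes $\zeta$ the unique minimizer of $E$ over $\bar{\mathcal R}_\zeta\cap\mathcal C_\zeta$. Since $v\in\bar{\mathcal R}_\zeta\cap\mathcal C_\zeta$ attains the minimum value $E(\zeta)$, uniqueness forces $v=\zeta$. As the initial subsequence was arbitrary, every weakly convergent subsequence of $\{v_n\}$ has limit $\zeta$, so the full sequence satisfies $v_n\rightharpoonup\zeta$ in $L^p(\mathbb S^2)$.

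Finally, I would upgrade this to strong convergence. Here both $v_n$ and the limit $\zeta$ lie in $\mathcal R_\zeta$, and on $\mathcal R_\zeta$ the weak and strong $L^p$ topologies coincide; consequently $v_n\rightharpoonup\zeta$ together with $\zeta\in\mathcal R_\zeta$ yields $v_n\to\zeta$ strongly in $L^p(\mathbb S^2)$, as desired. I expect the main subtlety to be the limit-identification step rather than the final upgrade: one must transfer the exact constraint $\mathbf m=\mathbf m(\zeta)$ to the weak limit (legitimate only because $\mathbf m$ is weakly continuous, even though the $v_n$ themselves need not lie in $\mathcal C_\zeta$) and then apply the uniqueness in Proposition \ref{ps21}(iii), which is precisely what converts the convergence of the energy and the constraint into identification with $\zeta$.
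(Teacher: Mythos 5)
Your proof is correct and follows essentially the same route as the paper's: extract a weak limit in $\bar{\mathcal R}_\zeta$, use weak continuity of $E$ and $\mathbf m$ to place it in $\bar{\mathcal R}_\zeta\cap\mathcal C_\zeta$ with the minimal energy, identify it with $\zeta$ via the uniqueness in Proposition \ref{ps21}(iii), and then upgrade to strong convergence (your appeal to the coincidence of weak and strong topologies on $\mathcal R_\zeta$ is the same mechanism the paper invokes as ``uniform convexity''). Your explicit subsequence-of-a-subsequence argument for the full sequence, and your spelled-out verification that $E$ is weakly continuous via compactness of $\mathcal G$, merely make explicit what the paper leaves as brief remarks.
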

\begin{proof}
Since $\bar{\mathcal R}_\zeta$ is weakly sequentially compact in $L^p(\mathbb S^2)$, there exists some subsequence $\{v_{n_j}\}$ such that $v_{n_j}\rightharpoonup\xi$ in $L^p(\mathbb S^2)$ as $j\to\infty$ for some $\xi \in \bar{\mathcal R}_\zeta$. It is easy to check that
\begin{equation}\label{gx01}
\xi\in \bar{\mathcal R}_\zeta\cap\mathcal C_\zeta,\quad E(\xi)=E(\zeta).
\end{equation}
In fact, since $E$ and $\mathbf m$ are both weakly continuous in $L^p(\mathbb S^2),$ we have that
\[E(\xi)=\lim_{j\to\infty}E(v_{n_j})=E(\zeta),\quad \mathbf m(\xi)=\lim_{j\to\infty} \mathbf m(v_{n_j})= \mathbf m(\zeta).\]
On the other hand, by Proposition \ref{ps21}, $\zeta$ is the unique minimizer of $E$ relative to $\bar{\mathcal R}_\zeta\cap\mathcal C_\zeta$. So from \eqref{gx01}, we deduce that $\xi=\zeta$. To summarize, we have proved that the sequence $\{v_{n_j}\}$ satisfies
\[v_{n_j}\subset\mathcal R_\zeta,\quad v_{n_j}\rightharpoonup\zeta \quad \mbox{in}\quad L^p(\mathbb S^2).\]
Therefore $v_{n_j}\to\zeta$ in $L^p(\mathbb S^2)$ by uniform convexity.
  Furthermore, by a contradiction argument, we can prove that the whole sequence  $\{v_{n}\}$ converges to $\zeta$  in $L^p(\mathbb S^2)$.

\end{proof}

Now we are ready to prove Theorem \ref{bsc1}.

   \begin{proof}[Proof of Theorem \ref{bsc1}]
 It suffices to show that for any sequence of admissible maps $\{\zeta^n(t)\}$ and any sequence of times $\{t_n\}\subset\mathbb R$,  if
\begin{equation}\label{tt09}
\|\zeta^n(0)-\zeta\|_{L^p(\mathbb S^2)}\to 0,
\end{equation}
then
\begin{equation}\label{tt010}
\|\zeta^n(t_n)-\zeta\|_{L^p(\mathbb S^2)}\to 0.
\end{equation}
By \eqref{tt09},
 it is clear that
\[\lim_{n\to\infty}E(\zeta^n(0))= E(\zeta),\quad \lim_{n\to\infty}\mathbf m(\zeta^n(0))=\mathbf m(\zeta).\]
Taking into account the fact that $E$ and $\mathbf m$ are both conserved quantities for admissible maps, we have that
\begin{equation}\label{gx21}
\lim_{n\to\infty}E(\zeta^n(t_n))=\lim_{n\to\infty}E(\zeta^n(0))=E(\zeta),
\,\, \lim_{n\to\infty} \mathbf m(\zeta^n(t_n))=\lim_{n\to\infty}\mathbf m(\zeta^n(0))=\mathbf m(\zeta).
\end{equation}
Now, if  $\zeta^n(0)\in\mathcal R_\zeta$, then the desired stability is a straightforward consequence of \eqref{gx21} and Proposition \ref{cpct21}.
In fact, if $\zeta^n(0)\in\mathcal R_\zeta$, then $\zeta^n(t)\in\mathcal R_\zeta$ for any $t\in\mathbb R$ by the definition of admissible maps, and thus  Proposition \ref{cpct21} can be applied.
To deal with general perturbations, we need to construct a ``follower" $\eta_n\in\mathcal R_\zeta$ related to each $\zeta^n(t_n).$ More specifically, for each $n\in\mathbb Z_+$, we take some  $\eta_n\in\mathcal R_\zeta$ such that
\[\|\eta_n-\zeta^n(t_n)\|_{L^p(\mathbb S^2)}=\inf_{u\in\mathcal R_\zeta,v\in\mathcal R_{\zeta^n(0)}}\|u-v\|_{L^p(\mathbb S^2)}.\]
Note that such $\eta_n$ exists by \eqref{kd01}.
 In particular,
\[ \|\eta_n-\zeta^n(t_n)\|_{L^p(\mathbb S^2)}\leq \|\zeta-\zeta^n(0)\|_{L^p(\mathbb S^2)},\]
which in combination with  \eqref{tt09} yields
\begin{equation}\label{tt121}
\|\eta_n-\zeta^n(t_n)\|_{L^p(\mathbb S^2)} \to0\quad\mbox{as }\,\,n\to\infty.
\end{equation}
Hence
\begin{equation}\label{tt08}
\lim_{n\to\infty} E(\eta_n)= E(\zeta),\quad \lim_{n\to\infty}\mathbf m(\eta_n)=\mathbf m(\zeta).
\end{equation}
To summarize, we have constructed a sequence $\{\eta_n\}\subset \mathcal R_\zeta$ such that \eqref{tt08} holds.
Using Proposition \ref{cpct21}, we deduce that $\eta_n\to\zeta$ in $L^p(\mathbb S^2)$, which together with \eqref{tt121} leads to the desired assertion \eqref{tt010}.
   \end{proof}

  \subsection{Proof of Theorem \ref{bsc2}}

The proof of Theorem \ref{bsc2} is similar to that of Theorem \ref{bsc1}, with the major difference being the method used for verifying compactness.

We begin with the following lemma.

\begin{lemma}\label{rckd1}
Let $1<p<\infty$ and  ${\bar v}\in  L^p(\mathbb S^2)$. Denote $\mathcal R:=\mathcal R_{\bar v}$ and  $\mathcal C:=\mathcal C_{\bar v}$. Let $p'=p/(p-1)$ be the H\"older conjugate of $p$. For $w\in L^{p'}(\mathbb S^2),$ define a linear functional $L_w:L^{p}(\mathbb S^2)\mapsto \mathbb R$ by setting
\begin{equation}\label{lfde01}
L_w(v):=\int_{\mathbb S^2}wv\dd{\sigma},\quad v\in L^{p}(\mathbb S^2).
\end{equation}
Then there exists some $\eta\in \mathcal R\cap\mathcal C$ such that
\[L_w(\eta)=\sup_{v\in \bar{\mathcal R}\cap\mathcal C}L_w(v).\]
\end{lemma}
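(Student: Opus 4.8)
The plan is to reduce the assertion to an application of the Krein--Milman / Bauer maximum principle, exploiting the two structural facts recorded just above the statement: that $\bar{\mathcal R}\cap\mathcal C$ is convex and weakly sequentially compact, and that its set of extreme points is precisely $\mathcal R\cap\mathcal C$.

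First I would establish existence of a maximizer of $L_w$ on $\bar{\mathcal R}\cap\mathcal C$. Since $w\in L^{p'}(\mathbb S^2)$, the functional $L_w$ is bounded and linear, hence weakly continuous on $L^p(\mathbb S^2)$. Because $\bar{\mathcal R}\cap\mathcal C$ is weakly sequentially compact, a maximizing sequence has a weakly convergent subsequence whose limit lies in $\bar{\mathcal R}\cap\mathcal C$ and attains the supremum $M:=\sup_{v\in\bar{\mathcal R}\cap\mathcal C}L_w(v)$. Denoting the set of all maximizers by
\[
F:=\{u\in\bar{\mathcal R}\cap\mathcal C\mid L_w(u)=M\},
\]
we see that $F$ is nonempty.

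Next I would observe that $F$ is a nonempty, convex, weakly compact \emph{face} of $\bar{\mathcal R}\cap\mathcal C$. Convexity and weak closedness of $F$ follow from the affineness and weak continuity of $L_w$ together with the convexity and weak closedness of $\bar{\mathcal R}\cap\mathcal C$; being a weakly closed subset of a weakly compact set, $F$ is weakly compact. That $F$ is a face is immediate: if $u\in F$ decomposes as $u=\theta u_1+(1-\theta)u_2$ with $u_1,u_2\in\bar{\mathcal R}\cap\mathcal C$ and $\theta\in(0,1)$, then $M=L_w(u)=\theta L_w(u_1)+(1-\theta)L_w(u_2)$ with $L_w(u_i)\le M$ forces $L_w(u_1)=L_w(u_2)=M$, so $u_1,u_2\in F$. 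Since $L^p(\mathbb S^2)$ is reflexive for $1<p<\infty$, the bounded weakly closed convex set $\bar{\mathcal R}\cap\mathcal C$ is weakly compact and the weak topology is locally convex; hence the Krein--Milman theorem applies to the nonempty weakly compact convex set $F$ and produces an extreme point $\eta$ of $F$. Because $F$ is a face, $\eta$ is automatically an extreme point of the whole set $\bar{\mathcal R}\cap\mathcal C$: if $\eta=\theta w_1+(1-\theta)w_2$ with $w_1,w_2\in\bar{\mathcal R}\cap\mathcal C$ and $\theta\in(0,1)$, the face property puts $w_1,w_2\in F$, and extremality of $\eta$ in $F$ gives $w_1=w_2=\eta$. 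Invoking the cited characterization that the extreme points of $\bar{\mathcal R}\cap\mathcal C$ are exactly $\mathcal R\cap\mathcal C$, we conclude $\eta\in\mathcal R\cap\mathcal C$, while by construction $L_w(\eta)=M$, which is the claim. Equivalently, the whole argument is a direct instance of the Bauer maximum principle for the affine weakly continuous functional $L_w$ on the weakly compact convex set $\bar{\mathcal R}\cap\mathcal C$.

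I expect the only delicate point to be topological bookkeeping: ensuring one is entitled to pass from the stated weak sequential compactness to genuine weak compactness, so that Krein--Milman is applicable. This is precisely where reflexivity of $L^p(\mathbb S^2)$ for $1<p<\infty$ and the Eberlein--Šmulian theorem enter. Everything else is formal once the extreme-point description of $\bar{\mathcal R}\cap\mathcal C$ is taken as given from the preliminaries.
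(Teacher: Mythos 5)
Your proof is correct and takes essentially the same approach as the paper: both obtain a maximizer via weak sequential compactness, apply the Krein--Milman theorem to the set of maximizers (your face $F$ is exactly the paper's $\bar{\mathcal R}\cap\mathcal C\cap\Lambda$), and use the face property together with the extreme-point characterization of $\bar{\mathcal R}\cap\mathcal C$ to conclude $\eta\in\mathcal R\cap\mathcal C$. Your explicit justification for upgrading weak sequential compactness to weak compactness (reflexivity of $L^p$ plus Eberlein--\v{S}mulian) is a point the paper leaves implicit, but otherwise the two arguments coincide.
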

\begin{proof}

Since $\bar{\mathcal R}\cap\mathcal C$ is weakly sequentially compact and $L_w$ is weakly sequentially continuous, there exists some $\xi\in\bar{\mathcal R}\cap\mathcal C$ such that
\begin{equation}\label{zdd01}
L_w(\xi)=\sup_{v\in \bar{\mathcal R}\cap\mathcal C}L_w(v).
\end{equation}
Define
\[\Lambda:=\{v\in L^p(\mathbb S^2)\mid L_w(v)=L_w(\xi)\}.\]
It is easy to see that $\bar{\mathcal R}\cap\mathcal C\cap\Lambda$ is a weakly compact and convex set in $L^p(\mathbb S^2).$  By the   Krein–Milman theorem (cf. Chapter 3 of \cite{Rudin}), there is an extreme point of  $\bar{\mathcal R}\cap\mathcal C\cap\Lambda$, denoted by $\eta$.
 Since $\bar{\mathcal R}\cap\mathcal C$ is convex and  ${\mathcal R}\cap\mathcal C$ is the set of extreme points of $\bar{\mathcal R}\cap\mathcal C$, it suffices to show that $\eta$ is  an extreme point of $\bar{\mathcal R}\cap\mathcal C$. Suppose by contradiction that there exist  $\eta_1,\eta_2\in \bar{\mathcal R}\cap\mathcal C$ such that  $\eta_1,\eta_2\neq \eta$  and $\eta= (\eta_1+\eta_2)/2$. Then $\eta_1\notin \Lambda$ or $\eta_2\notin \Lambda$ since  $\eta$ is an extreme point of  $\bar{\mathcal R}\cap\mathcal C\cap\Lambda$. Hence
\begin{equation}\label{jcc1}
L_w(\eta_1)\neq L_w(\eta)\quad \mbox{or}\quad L_w(\eta_2)\neq L_w(\eta).
\end{equation}
On the other hand, since  $L_w$ is linear, it holds that
\begin{equation}\label{jcc2}
L_w(\eta)=\frac{1}{2}(L_w(\eta_1)+L_w(\eta_2)).
\end{equation}
From \eqref{jcc1} and \eqref{jcc2}, we deduce that $L_w(\eta_1)> L_w(\eta)$ or $L_w(\eta_2)> L_w(\eta)$, a contradiction to \eqref{zdd01}.

\end{proof}

\begin{lemma}\label{lem21}
 Let $1<p<\infty$ and  ${\bar v}\in \mathring L^p(\mathbb S^2)$. Denote $\mathcal R:=\mathcal R_{\bar v}$ and  $\mathcal C:=\mathcal C_{\bar v}$. Suppose  that $\zeta$ is a local maximizer of $E$ relative to $\bar{\mathcal R}\cap \mathcal C$, i.e., there exists  some open neighborhood  of $\zeta$ such that
\begin{equation}\label{leqt01}
  E(u)\leq E(\zeta)\quad \forall\,u\in\bar{\mathcal R}\cap \mathcal C\cap U.
\end{equation}
Then $\zeta\in\mathcal R.$
\end{lemma}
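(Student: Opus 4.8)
The plan is to reduce the statement to the extreme-point structure of $\bar{\mathcal R}\cap\mathcal C$ that has already been recorded in this section. Recall that $\bar{\mathcal R}\cap\mathcal C$ is convex and its set of extreme points is exactly $\mathcal R\cap\mathcal C\subset\mathcal R$. Hence it suffices to show that a local maximizer $\zeta$ of the quadratic energy $E$ over $\bar{\mathcal R}\cap\mathcal C$ is necessarily an extreme point of $\bar{\mathcal R}\cap\mathcal C$; the inclusion $\zeta\in\mathcal R$ then follows at once.

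First I would record the strict convexity of $E$. Using the symmetry \eqref{smty01} and positive-definiteness \eqref{podf01} of $\mathcal G$, a direct expansion yields, for $u\neq w$ in $\mathring L^p(\mathbb S^2)$,
\[
\frac12\big(E(u)+E(w)\big)-E\!\left(\frac{u+w}{2}\right)=\frac18\int_{\mathbb S^2}(u-w)\,\mathcal G(u-w)\,d\sigma>0,
\]
so $E$ is strictly convex, and in particular $E$ is strictly convex along every nondegenerate line segment in $\mathring L^p(\mathbb S^2)$.

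Next I would argue by contradiction. Suppose $\zeta$ is not an extreme point of $\bar{\mathcal R}\cap\mathcal C$. Then there exist $\eta_1,\eta_2\in\bar{\mathcal R}\cap\mathcal C$ with $\eta_1\neq\eta_2$ and $\zeta=\tfrac12(\eta_1+\eta_2)$. By convexity the segment $\gamma(s):=(1-s)\eta_1+s\eta_2$, $s\in[0,1]$, lies in $\bar{\mathcal R}\cap\mathcal C$, with $\gamma(1/2)=\zeta$, and the scalar function $h(s):=E(\gamma(s))$ is strictly convex on $[0,1]$. Since $\gamma$ is continuous and $U$ is an open neighborhood of $\zeta=\gamma(1/2)$, there is $\rho>0$ with $\gamma(s)\in\bar{\mathcal R}\cap\mathcal C\cap U$ for $|s-1/2|<\rho$; hence the local maximality \eqref{leqt01} gives $h(s)\le h(1/2)$ on this interval.

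Finally, a strictly convex scalar function cannot attain a local maximum at an interior point: taking $s_1=1/2-\rho'$ and $s_2=1/2+\rho'$ with $0<\rho'<\rho$, strict convexity gives $h(1/2)<\tfrac12\big(h(s_1)+h(s_2)\big)$, while the local maximum bound gives $\tfrac12\big(h(s_1)+h(s_2)\big)\le h(1/2)$, a contradiction. Therefore $\zeta$ is an extreme point of $\bar{\mathcal R}\cap\mathcal C$, so $\zeta\in\mathcal R\cap\mathcal C\subset\mathcal R$, as claimed. The argument is entirely soft; the only point needing care is to verify that the competitors on the segment remain inside the prescribed neighborhood $U$ so that \eqref{leqt01} applies, which is immediate from the continuity of $\gamma$. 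I do not expect a serious obstacle, since both the strict convexity of $E$ and the extreme-point characterization of $\bar{\mathcal R}\cap\mathcal C$ are already available.
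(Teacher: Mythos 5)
Your proof is correct, but it takes a genuinely different route from the paper's. The paper argues via first-order conditions: local maximality yields that $\zeta$ maximizes the linear functional $L_{\mathcal G\zeta}$ over $\bar{\mathcal R}\cap\mathcal C$; then its Lemma \ref{rckd1} (a Krein--Milman argument) produces a maximizer $\eta\in\mathcal R\cap\mathcal C$ of the same linear functional, and a second-order expansion of $E$ along the segment from $\zeta$ to $\eta$ combined with the local maximality \eqref{leqt01} forces $E(\eta-\zeta)\le 0$, whence $\zeta=\eta\in\mathcal R$ by positive-definiteness of $\mathcal G$. You instead bypass the linearization and Lemma \ref{rckd1} entirely: you use the strict convexity of $E$ (your identity $\tfrac12(E(u)+E(w))-E(\tfrac{u+w}{2})=\tfrac18\int(u-w)\mathcal G(u-w)\,d\sigma$ is exactly right, and \eqref{podf01} applies since every element of $\bar{\mathcal R}\cap\mathcal C$ has zero mean, being a weak limit of zero-mean rearrangements) together with the already-recorded fact that the extreme points of $\bar{\mathcal R}\cap\mathcal C$ are precisely $\mathcal R\cap\mathcal C$, and observe that a strictly convex functional cannot have an interior local maximum along a nondegenerate segment through a non-extreme point. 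Both arguments ultimately rest on the same structural fact from Burton--McLeod/Burton--Ryan, but yours reaches the conclusion more directly and would apply verbatim to any strictly convex functional, making Lemma \ref{rckd1} unnecessary for this purpose; the paper's longer route has the merit of exhibiting $\zeta$ as a maximizer of the supporting linear functional $L_{\mathcal G\zeta}$, which is the viewpoint underlying Burton's general stability machinery.
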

\begin{proof}
For  any $u\in\bar{\mathcal R}\cap\mathcal C$, since $\bar{\mathcal R}\cap \mathcal C$ is convex and $U$ is open, it holds that
\[s u+(1-s) \zeta\in\bar{\mathcal R}\cap\mathcal C\cap U\]
provided that $s$ is nonnegative and sufficiently small.
In view of \eqref{leqt01}, we have that
 \[\frac{d}{ds}E(s u+(1-s) \zeta)\bigg|_{s=0^+}\leq 0,\]
which implies
\begin{equation}\label{pq01}
\int_{\mathbb S^2}u\mathcal G \zeta \dd{\sigma}\leq \int_{\mathbb S^2} \zeta \mathcal G\zeta \dd{\sigma}.
\end{equation}
The above inequality holds for arbitrary $u\in\bar{\mathcal R}\cap\mathcal C$, therefore
 $\zeta$ is a maximizer of the linear functional $L_{\mathcal G \zeta}$  (cf. \eqref{lfde01})  relative to $\bar{\mathcal R}\cap\mathcal C$.
On the other hand, by Lemma \ref{rckd1}, the linear functional $L_{\mathcal G \zeta}$  must attain the maximum value relative to $\bar{\mathcal R}\cap\mathcal C$ at some  $\eta\in\mathcal R\cap\mathcal C$. In combination with \eqref{pq01}, one has
\begin{equation}\label{leqt02}
  \int_{\mathbb S^2}\eta\mathcal G\zeta \dd{\sigma}= \int_{\mathbb S^2}\zeta\mathcal G\zeta \dd{\sigma}.
\end{equation}
Fix a sufficiently small positive number $s$ such that
\[s\eta+(1-s)\zeta\in \bar{\mathcal R}\cap\mathcal C\cap U.\]
By \eqref{leqt01}, it holds that
\begin{equation}\label{lelt01}
E(s \eta+(1-s)\zeta)\leq E(\zeta).
\end{equation}
By a direct computation,  one has
\begin{equation}\label{lelt03}
E(s \eta+(1-s)\zeta)=E(\zeta)+s\int_{\mathbb S^2}(\eta-\zeta)\mathcal G\zeta \dd{\sigma}+ s^2E(\eta-\zeta).
\end{equation}
From \eqref{leqt02}-\eqref{lelt03}, we get $E(\eta-\zeta)\leq 0$.  Therefore  $\zeta=\eta\in\mathcal R$ by the positive-definiteness of $\mathcal G$ (cf. \eqref{podf01}).

\end{proof}

 \begin{lemma}\label{cxv01}
 Let $\mathcal M$ be as in Theorem \ref{bsc2}.  Then there exists some weakly open set $V\subset L^p(\mathbb S^2)$ such that $\mathcal M$ is exactly the set of maximizers of $E$ relative to $\bar{\mathcal R}\cap \mathcal C\cap V$.
 \end{lemma}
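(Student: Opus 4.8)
The plan is to build the weakly open set $V$ by separating $\mathcal M$ from the remainder of a super-level set of $E$. Write $S:=E|_{\mathcal M}$ and set
\[K:=\{v\in\bar{\mathcal R}\cap\mathcal C\mid E(v)\geq S\}.\]
Since $E$ is weakly continuous and $\bar{\mathcal R}\cap\mathcal C$ is weakly sequentially compact, $K$ is a weakly closed subset of a weakly compact set, hence weakly compact, and $\mathcal M\subset K$. If I can find a weakly open $V$ with $K\cap V=\mathcal M$, then every $v\in\bar{\mathcal R}\cap\mathcal C\cap V$ satisfies either $E(v)<S$ or $v\in\mathcal M$, so $\sup_{\bar{\mathcal R}\cap\mathcal C\cap V}E=S$ is attained exactly on $\mathcal M$, which is the assertion. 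Such a $V$ exists precisely when $\mathcal M$ is relatively weakly open in $K$, i.e. when $K\setminus\mathcal M$ is weakly closed. Because $\bar{\mathcal R}$ is bounded and $L^{p'}(\mathbb S^2)$ is separable, the weak topology on $K$ is metrizable, so it suffices to prove that $K\setminus\mathcal M$ is weakly \emph{sequentially} closed.

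The crucial observation that makes this tractable is a Radon--Riesz upgrade: if $\{v_n\}\subset\bar{\mathcal R}$ and $v_n\rightharpoonup u$ with $u\in\mathcal R$, then in fact $v_n\to u$ strongly in $L^p(\mathbb S^2)$. Indeed, every element of $\bar{\mathcal R}$ has $L^p$-norm not exceeding the common norm of the elements of $\mathcal R$, which equals $\|u\|_{L^p(\mathbb S^2)}$ since $u\in\mathcal R$; on the other hand weak lower semicontinuity gives $\|u\|_{L^p(\mathbb S^2)}\leq\liminf_n\|v_n\|_{L^p(\mathbb S^2)}$. Hence $\|v_n\|_{L^p(\mathbb S^2)}\to\|u\|_{L^p(\mathbb S^2)}$, and strong convergence follows from the uniform convexity of $L^p$ for $1<p<\infty$. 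Using this, the weak sequential closedness of $K\setminus\mathcal M$ reduces to a purely strong statement: it is enough to exclude sequences $\{v_n\}\subset(\bar{\mathcal R}\cap\mathcal C)\setminus\mathcal M$ with $E(v_n)\geq S$ that converge \emph{strongly} to some $\zeta_0\in\mathcal M$. (A weak limit of a sequence in $K\setminus\mathcal M$ lies in $K$; were it to lie in $\mathcal M$, the upgrade would turn the convergence into strong convergence, producing exactly such a forbidden sequence.)

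To exclude these sequences I would argue by contradiction, distinguishing whether $v_n$ is an extreme point of $\bar{\mathcal R}\cap\mathcal C$ (the extreme points being exactly $\mathcal R\cap\mathcal C$). If infinitely many $v_n$ lie in $\mathcal R\cap\mathcal C$, then since $v_n\to\zeta_0\in\mathcal M\subset U$ strongly, we have $v_n\in\mathcal R\cap\mathcal C\cap U$ for large $n$; the local maximality \eqref{elv01} forces $E(v_n)=S$, and the isolation property \eqref{elv02} then gives $v_n\in\mathcal M$, a contradiction. If instead $v_n\in(\bar{\mathcal R}\setminus\mathcal R)\cap\mathcal C$, I would replace $v_n$ by an extreme competitor: by Lemma \ref{rckd1} applied with $w=\mathcal G v_n$ there is $\eta_n\in\mathcal R\cap\mathcal C$ maximizing $L_{\mathcal G v_n}$ over $\bar{\mathcal R}\cap\mathcal C$, so that $\int_{\mathbb S^2}(\eta_n-v_n)\mathcal G v_n\,d\sigma\geq 0$, and the identity $E(\eta_n)=E(v_n)+\int_{\mathbb S^2}(\eta_n-v_n)\mathcal G v_n\,d\sigma+E(\eta_n-v_n)$ together with \eqref{podf01} yields $E(\eta_n)\geq E(v_n)\geq S$. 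Once one knows that these $\eta_n$ also accumulate at $\mathcal M$, the extreme case applies to give $\eta_n\in\mathcal M$ and $E(\eta_n)=S$; feeding this back into the identity forces $\int_{\mathbb S^2}(\eta_n-v_n)\mathcal G v_n\,d\sigma=0$ and $E(\eta_n-v_n)=0$, whence $v_n=\eta_n\in\mathcal M$ by the positive-definiteness of $\mathcal G$ — the desired contradiction.

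The main obstacle is exactly the clause ``the $\eta_n$ accumulate at $\mathcal M$''. Passing to a weak limit, $\eta_n\rightharpoonup\eta_\ast$ with $\eta_\ast$ a maximizer of the linear functional $L_{\mathcal G\zeta_0}$ over $\bar{\mathcal R}\cap\mathcal C$; to keep $\eta_\ast$ near $\mathcal M$ rather than letting it escape to a distant rearrangement of equal or larger energy, I expect to need that $\zeta_0$ \emph{itself} maximizes $L_{\mathcal G\zeta_0}$ over $\bar{\mathcal R}\cap\mathcal C$. This is a first-order optimality statement that should be extracted from the hypothesis that $\zeta_0$ is a local maximizer of $E$ on $\mathcal R\cap\mathcal C$ by the differentiation-along-the-convex-set argument used in Lemma \ref{lem21}, and then combined with \eqref{elv02} to force $\eta_\ast$ into $\mathcal M$. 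Controlling this interplay between the weak closure, the linearized functional, and the \emph{local} (rather than global) nature of the maximality encoded in \eqref{elv01}--\eqref{elv02} is the delicate core of the proof; the extreme-point reduction via Lemma \ref{rckd1} and the Radon--Riesz upgrade are the two tools that render it manageable.
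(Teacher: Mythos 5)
Your reduction machinery is fine as far as it goes: $K$ is weakly compact, metrizability of the weak topology on bounded sets justifies working with sequences, the Radon--Riesz upgrade (a sequence in $\bar{\mathcal R}$ converging weakly to a point of $\mathcal R$ converges strongly) is correct, and your Case A is settled correctly by \eqref{elv01} and \eqref{elv02}. But the proof is not complete: Case B, where the bad sequence lies in $(\bar{\mathcal R}\setminus\mathcal R)\cap\mathcal C$, hinges on the clause that the extreme competitors $\eta_n$ ``accumulate at $\mathcal M$,'' which you leave unproven and yourself identify as the delicate core. The gap is genuine, and your proposed repair is circular. You want to show that $\zeta_0$ maximizes $L_{\mathcal G\zeta_0}$ over $\bar{\mathcal R}\cap\mathcal C$ by the differentiation-along-convex-combinations argument of Lemma \ref{lem21}; but that argument requires $E(u)\leq E(\zeta_0)$ for all $u\in\bar{\mathcal R}\cap\mathcal C$ near $\zeta_0$, i.e.\ local maximality relative to the \emph{weak closure}, whereas the hypothesis \eqref{elv01} gives it only relative to $\mathcal R\cap\mathcal C$; convex combinations $\theta u+(1-\theta)\zeta_0$ with $u\in\bar{\mathcal R}\cap\mathcal C$ leave $\mathcal R\cap\mathcal C$, so \eqref{elv01} cannot be applied to them. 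Extending \eqref{elv01} from $\mathcal R$ to $\bar{\mathcal R}$ is exactly the statement the lemma asserts. Moreover, even granting that $\zeta_0$ maximizes $L_{\mathcal G\zeta_0}$, the weak limit $\eta_\ast$ of your $\eta_n$ is merely \emph{some} maximizer of that linear functional over $\bar{\mathcal R}\cap\mathcal C$; the set of such maximizers is a face of the convex set which may contain points far from $\mathcal M$ (for instance near distant rearrangements of equal or higher energy, since $\mathcal M$ consists only of \emph{local} maximizers), so $\eta_\ast$ need not lie in $U$, and neither \eqref{elv01} nor \eqref{elv02} can be invoked to force $\eta_\ast\in\mathcal M$.

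The missing device --- the one the paper's proof is built on --- is to make the neighborhood weakly open \emph{before} taking the weak closure: since the weak and strong topologies coincide on $\mathcal R$, one can choose a weakly open $V$ with $\mathcal R\cap U=\mathcal R\cap V$, as in the proof of Proposition \ref{ps21}. Then any $u\in\bar{\mathcal R}\cap\mathcal C\cap V$ is a weak limit of a sequence in $\mathcal R\cap\mathcal C$, and this sequence eventually enters the weakly open $V$, so \eqref{elv01} together with the weak continuity of $E$ yields $E(u)\leq E|_{\mathcal M}$ on all of $\bar{\mathcal R}\cap\mathcal C\cap V$; if equality holds, Lemma \ref{lem21} returns $u\in\mathcal R$, and then \eqref{elv02} gives $u\in\mathcal M$. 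Your strong-convergence framework cannot substitute for this, because strong closeness of $v_n$ to $\zeta_0$ gives no control over where the $\mathcal R\cap\mathcal C$-sequences weakly approximating $v_n$ live, and that is the only place the hypothesis \eqref{elv01} can ultimately be used. Once the weakly open $V$ is inserted at the start, your Case B collapses to the paper's argument and the case distinction becomes unnecessary.
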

 \begin{proof}

Let $U$ be the open set as in Theorem \ref{bsc2}.
As in the proof of Proposition \ref{ps21}, we can choose some weakly open set $V\subset L^p(\mathbb S^2)$ such that $\mathcal R\cap U=\mathcal R\cap V.$ By \eqref{elv01} and \eqref{elv02},
\begin{equation}\label{peq054}
E(u)\leq E|_{\mathcal M}\quad \forall\,u\in  {\mathcal R}\cap\mathcal C\cap V.
\end{equation}
 \begin{equation}\label{peq055}
u\in  {\mathcal R}\cap\mathcal C\cap V,\,\,  E(u)= E|_{\mathcal M} \quad \Longrightarrow \quad u\in\mathcal M.
\end{equation}
To complete the proof, it suffices to show that
\begin{equation}\label{cdd01}
E(u)\leq E|_{\mathcal M}\quad \forall\,u\in \bar{\mathcal R}\cap\mathcal C\cap V,
\end{equation}
\begin{equation}\label{cdd02}
u\in \bar{\mathcal R}\cap\mathcal C\cap V,\,\,  E(u)= E|_{\mathcal M} \quad \Longrightarrow \quad u\in\mathcal M.
\end{equation}
To prove \eqref{cdd01}, notice that for any $u\in\bar{\mathcal R}\cap\mathcal C\cap V,$ there exists a sequence  $\{u_n\}\subset \mathcal R\cap\mathcal C$ such that $u_n\rightharpoonup u$ in $L^p(\mathbb S^2)$. Since $V$ is weakly open, it holds that $u_n\in V$ if $n$ is large enough. So by \eqref{peq054},
\[E(u)=\lim_{n\to+\infty}E(u_n)\leq E|_{\mathcal M}.\]
Hence \eqref{cdd01} has been verified.
Next we prove \eqref{cdd02}.
Suppose that $u\in \bar{\mathcal R}\cap\mathcal C\cap V$  satisfies $E(u)= E|_{\mathcal M}$, then $u$ must be a local maximizer of $E$ relative to $\bar{\mathcal R}\cap \mathcal C$ by \eqref{cdd01}. Using Lemma \ref{lem21}, we get  $u\in\mathcal R$. Therefore $u\in\mathcal M$ by \eqref{peq055}.

 \end{proof}

     \begin{proposition}[Compactness]\label{cpct22}
 Let $\mathcal M$ be as in Theorem \ref{bsc2}. Then there exists some $\tau>0,$ such that for any sequence $\{v_n\}\subset\mathcal R\cap\mathcal C$ satisfying
 \begin{equation}\label{peqas1}
 \min_{v\in\mathcal M}\|v_n-v\|_{L^p(\mathbb S^2)}<\tau\quad\forall\,n\in\mathbb Z_+,\quad \lim_{n\to\infty}E(v_n)=E|_{\mathcal M},
 \end{equation}
 there exist a subsequence $\{v_{n_j}\}$ and  $\eta\in\mathcal M$ such that
  $v_{n_j}\to\eta$ in $L^p(\mathbb S^2)$ as $j\to\infty.$
\end{proposition}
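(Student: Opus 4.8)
The plan is to mirror the scheme of Proposition \ref{cpct21}: along a subsequence extract a weak limit, identify it with a maximizer through the variational characterization of Lemma \ref{cxv01}, and then upgrade weak to strong convergence using that the limit lies in the rearrangement class $\mathcal R$. The only genuinely new ingredient, compared with the local-minimizer case, is the choice of the radius $\tau$: it must be small enough to force the weak limit back into the weakly open set $V$ furnished by Lemma \ref{cxv01}.

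First I would fix the threshold $\tau$. Let $V$ be the weakly open set of Lemma \ref{cxv01}, for which $\mathcal M$ is exactly the set of maximizers of $E$ over $\bar{\mathcal R}\cap\mathcal C\cap V$; in particular $\mathcal M\subset V$. Since the weak topology is coarser than the strong one, $V$ is also \emph{strongly} open, so $L^p(\mathbb S^2)\setminus V$ is strongly closed. As $\mathcal M$ is strongly compact and disjoint from this complement, the distance $\tau_0:=\mathrm{dist}(\mathcal M,\,L^p(\mathbb S^2)\setminus V)$ is strictly positive, and I would take any $\tau\in(0,\tau_0)$. This guarantees that the open strong $\tau_0$-neighborhood of $\mathcal M$ is contained in $V$.

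Next, given a sequence $\{v_n\}\subset\mathcal R\cap\mathcal C$ satisfying \eqref{peqas1}, I would invoke the weak sequential compactness of $\bar{\mathcal R}\cap\mathcal C$ to pass to a subsequence $v_{n_j}\rightharpoonup\eta\in\bar{\mathcal R}\cap\mathcal C$, and the weak continuity of $E$ to obtain $E(\eta)=\lim_j E(v_{n_j})=E|_{\mathcal M}$. The main obstacle is precisely that $V$ is only weakly open, so membership in $V$ need not survive weak limits; hence I must show directly that $\eta\in V$. To this end, for each $j$ pick $w_{n_j}\in\mathcal M$ realizing $\min_{v\in\mathcal M}\|v_{n_j}-v\|_{L^p}<\tau$, and by compactness of $\mathcal M$ extract a further subsequence with $w_{n_j}\to w\in\mathcal M$ strongly. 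Then $v_{n_j}-w_{n_j}\rightharpoonup\eta-w$, and weak lower semicontinuity of the norm yields $\|\eta-w\|_{L^p}\le\liminf_j\|v_{n_j}-w_{n_j}\|_{L^p}\le\tau<\tau_0$, so $\eta$ lies in the strong $\tau_0$-neighborhood of $\mathcal M$, whence $\eta\in V$.

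With $\eta\in\bar{\mathcal R}\cap\mathcal C\cap V$ and $E(\eta)=E|_{\mathcal M}$ established, the characterization \eqref{cdd02} of Lemma \ref{cxv01} forces $\eta\in\mathcal M\subset\mathcal R$. Finally, since $v_{n_j}\rightharpoonup\eta$ with both $v_{n_j}$ and the limit $\eta$ in the rearrangement class $\mathcal R$—on which the weak and strong topologies coincide (equivalently, $\|v_{n_j}\|_{L^p}=\|\eta\|_{L^p}$ together with the uniform convexity of $L^p$)—I conclude $v_{n_j}\to\eta$ strongly in $L^p(\mathbb S^2)$, which completes the proof. The delicate point throughout is the interplay between the weak topology (needed for compactness and to apply Lemma \ref{cxv01}) and the strong topology (needed for the conclusion and to pin down $\tau$); the resolution hinges entirely on $V$ being open in both topologies.
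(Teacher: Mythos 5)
Your proposal is correct and follows essentially the same route as the paper's proof: fix $\tau$ using the strong compactness of $\mathcal M$ and the fact that the weakly open set $V$ from Lemma \ref{cxv01} is also strongly open, extract a weak limit $\eta\in\bar{\mathcal R}\cap\mathcal C$ with $E(\eta)=E|_{\mathcal M}$, place $\eta$ in $V$, invoke \eqref{cdd02} to get $\eta\in\mathcal M\subset\mathcal R$, and upgrade to strong convergence by uniform convexity. Your only addition is to spell out, via the auxiliary minimizers $w_{n_j}\to w$, the weak lower semicontinuity of the distance to the compact set $\mathcal M$, a step the paper uses implicitly when asserting $\min_{v\in\mathcal M}\|\eta-v\|_{L^p(\mathbb S^2)}\leq\tau$.
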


  \begin{proof}
Let $V$ be as in Lemma \ref{cxv01}.
Since $\mathcal M$ is compact, we can choose a $\tau >0$ such that
\[\left\{u\in L^p(\mathbb S^2)\mid \min_{v\in\mathcal M}\|u-v\|_{L^p(\mathbb S^2)}\leq \tau\right\}\subset V.\]
Let  $\{v_n\}\subset\mathcal R\cap\mathcal C$ be a sequence satisfying \eqref{peqas1}. Since $\{v_n\}$ is obviously bounded in $L^p(\mathbb S^2)$, there exist a subsequence  $\{v_{n_j}\}$ and some $\eta\in\bar{\mathcal R}\cap\mathcal C$ such that $v_{n_j}\rightharpoonup \eta$  in $L^p(\mathbb S^2)$ as $j\to\infty$.  It is clear that $E(\eta)=E|_{\mathcal M}$. On the other hand, since \[\min_{v\in\mathcal M}\|\eta-v\|_{L^p(\mathbb S^2)}\leq \tau,\]
we have that  $\eta\in V$.
To conclude,  $\eta$ is a maximizer of $E$ relative to $\bar{\mathcal R}\cap \mathcal C\cap V$, and thus $\eta\in\mathcal M$ by Lemma \ref{cxv01}. Besides, $v_{n_j}\to \eta$ in $L^p(\mathbb S^2)$ as $j\to\infty$ by uniform convexity.

   \end{proof}

Having made the necessary preparations, we are ready to prove Theorem \ref{bsc2}.
   \begin{proof}[Proof of Theorem \ref{bsc2}]
 Suppose by contradiction that there exist some $\varepsilon_0>0$, some sequence of continuous admissible maps $\{\zeta^n(t)\}$, and some sequence of times $\{t_n\}\subset\mathbb R$ such that
\begin{equation}\label{pp01}
\min_{v\in\mathcal M}\|\zeta^n(0)-v\|_{L^p(\mathbb S^2)}\to0 \quad \mbox{as}\quad n\to\infty,
\end{equation}
\begin{equation}\label{pp0101}
\min_{v\in\mathcal M}\|\zeta^n(t_n)-v\|_{L^p(\mathbb S^2)}\geq \varepsilon_0\quad \forall\,n\in\mathbb Z_+.
\end{equation}
Without loss of generality, we assume that $\varepsilon_0<\tau$, where $\tau$ is the positive number determined in Proposition \ref{cpct22}. Since each $\zeta^n(t)$ is continuous,
the following function
\[\min_{v\in\mathcal M}\|\zeta^n(t)-v\|_{L^p(\mathbb S^2)}\]
is  continuous with respect to $t$. Hence by reselecting a new sequence of times, still denoted by $\{t_n\}$, we can assume that
\begin{equation}\label{pp02}
\min_{v\in\mathcal M}\|\zeta^n(t_n)-v\|_{L^p(\mathbb S^2)}= \varepsilon_0<\tau\quad \forall\,n\in\mathbb Z_+.
\end{equation}
As in the proof of Theorem \ref{bsc1}, for each $n\in\mathbb Z_+$,
we can  take some ``follower" $\eta_n\in\mathcal R$  such that
\[\|\zeta^n(t_n)-\eta_n\|_{L^p(\mathbb S^2)}=\min_{u\in \mathcal R_{\zeta^n(t_n)},\,v\in\mathcal R}\|u-v\|_{L^p(\mathbb S^2)}.\]
Since $\mathcal R_{\zeta^n(t_n)}=\mathcal R_{\zeta^n(0)}$, we have that
\begin{equation}\label{pp03}
\|\zeta^n(t_n)-\eta_n\|_{L^p(\mathbb S^2)}\leq \|\zeta^n(0)-v\|_{L^p(\mathbb S^2)}\quad\forall\,v\in\mathcal R,\,n\in\mathbb Z_+.
\end{equation}
By \eqref{pp01} and \eqref{pp03},
\begin{equation}\label{pp04}
\|\zeta^n(t_n)-\eta_n\|_{L^p(\mathbb S^2)}\to 0\,\,\,\,\mbox{as}\,\,\,\, n\to\infty.
\end{equation}
 Taking into account \eqref{pp02}, we deduce that
\begin{equation}\label{pp041}
\min_{v\in\mathcal M}\|\eta_n-v\|_{L^p(\mathbb S^2)} <\tau\quad\mbox{for sufficiently large $n$.}
\end{equation}
On the other hand, by \eqref{pp01},
\[\lim_{n\to+\infty} E(\zeta^n(0))=E|_{\mathcal M},\]
which, in combination with the fact that $E$ is conserved for admissible maps,  yields
\begin{equation}\label{pp05}
\lim_{n\to\infty} E(\zeta^n(t_n))=E|_{\mathcal M}.
\end{equation}
From \eqref{pp04} and \eqref{pp05}, we deduce that
\begin{equation}\label{pp051}
\lim_{n\to\infty} E(\eta_n)=E|_{\mathcal M}.
\end{equation}
To summarize, we have constructed a sequence $\{\eta_n\}\subset\mathcal R$ such that \eqref{pp041} and \eqref{pp051} hold. By Proposition \ref{cpct22}, we deduce that, up to a subsequence, $\eta_n\to\zeta$ in $L^p(\mathbb S^2)$ for some $\zeta\in\mathcal M.$ This is a contradiction to \eqref{pp02} and \eqref{pp04}.
   \end{proof}

\begin{remark}
In the proofs of Theorems \ref{bsc1} and \ref{bsc2}, we develop a new approach to constructing the ``follower" $\eta_n$, which is different from the one in \cite{BAR}  by solving a linear transport equation. The advantage of our approach  is that it does not require additional discussion on the regularity of perturbed solutions.
\end{remark}

\section{Proofs of stability theorems}\label{sc4}

In this section, we give the proofs of Theorems \ref{a1s}, \ref{a2s2} and \ref{a2s3}. We will show that the solutions in  Theorem  \ref{a1s} satisfy the conditions of Theorem \ref{bsc1}, and the solutions in  Theorems \ref{a2s2} and \ref{a2s3} satisfy the conditions of Theorem \ref{bsc2}.

\subsection{Proof of Theorem \ref{a1s}}

According to Theorem \ref{bsc1}, Theorem \ref{a1s}  is a straightforward consequence of the following proposition.
\begin{proposition}\label{ps31}
Let $\zeta$ be as in Theorem \ref{a1s}. Then $\zeta$ is the unique minimizer of $E$ relative to ${\mathcal R}_\zeta\cap\mathcal C_\zeta.$
\end{proposition}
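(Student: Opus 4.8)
The plan is to reduce the whole statement to the convexity of $E$ together with a single rearrangement inequality. Write $w:=\mathcal G\zeta$ and $\tilde w:=\mathcal G\zeta+\beta\mathbf p\cdot\mathbf x$, so that the hypothesis \eqref{nav1} reads $\zeta=g(\tilde w)$ with $g$ decreasing. Since $\mathcal G\zeta\in\mathring W^{2,p}(\mathbb S^2)$ is continuous and $\beta\mathbf p\cdot\mathbf x$ is smooth, $\tilde w$ is a bona fide element of $L^{p'}(\mathbb S^2)$ (with $p'=p/(p-1)$), and $\zeta$ is a decreasing function of $\tilde w$. By Burton's rearrangement theory \cite{BMA,BHP}, used in its minimization form (apply the maximization statement to $-\tilde w$), a function of the shape $g\circ\tilde w$ with $g$ decreasing is precisely a minimizer of the linear functional $u\mapsto\int_{\mathbb S^2}u\,\tilde w\,d\sigma$ over the entire rearrangement class $\mathcal R_\zeta$. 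In particular
\[
\int_{\mathbb S^2}\zeta\,\tilde w\,d\sigma\le\int_{\mathbb S^2}u\,\tilde w\,d\sigma\qquad\forall\,u\in\mathcal R_\zeta .
\]

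Next I would feed in the constraint set $\mathcal C_\zeta$. For $u\in\mathcal C_\zeta$ one has $\mathbf m_u=\mathbf m_\zeta$, hence $\int_{\mathbb S^2}u\,(\beta\mathbf p\cdot\mathbf x)\,d\sigma=\beta\,\mathbf p\cdot\mathbf m_\zeta$ is independent of $u$; the same identity holds for $\zeta$ itself. Subtracting this common constant from the displayed inequality eliminates the linear term coming from $\beta\mathbf p\cdot\mathbf x$ and leaves
\[
\int_{\mathbb S^2}(u-\zeta)\,\mathcal G\zeta\,d\sigma\ge0\qquad\forall\,u\in\mathcal R_\zeta\cap\mathcal C_\zeta .
\]
In words, the first-order term in the expansion of $E$ about $\zeta$ has the correct sign along the constraint class. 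I would then invoke the exact quadratic identity, which follows from the symmetry of $\mathcal G$:
\[
E(u)=E(\zeta)+\int_{\mathbb S^2}(u-\zeta)\,\mathcal G\zeta\,d\sigma+E(u-\zeta).
\]
For $u\in\mathcal R_\zeta\cap\mathcal C_\zeta$ the middle term is nonnegative by the previous step, while $E(u-\zeta)\ge0$ by the positive-definiteness of $\mathcal G$ in \eqref{podf01}, with equality if and only if $u=\zeta$. Hence $E(u)\ge E(\zeta)$ with equality only at $u=\zeta$, which is exactly the assertion that $\zeta$ is the unique minimizer of $E$ relative to $\mathcal R_\zeta\cap\mathcal C_\zeta$. (If preferred, one may instead verify that $\zeta$ is merely a local minimizer and then quote Proposition \ref{ps21} to upgrade this to the unique global minimizer.)

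The step that carries the genuine content, as opposed to bookkeeping, is the rearrangement inequality identifying $g\circ\tilde w$ with $g$ decreasing as a minimizer of $L_{\tilde w}$ over $\mathcal R_\zeta$. The delicate point is that $g$ is only assumed monotone — not strictly monotone, possibly discontinuous, and possibly $\pm\infty$-valued — so one cannot simply appeal to a naive Hardy–Littlewood comparison and must instead use the general monotone-function characterization of extremizers of linear functionals over rearrangement classes from Burton's theory. Note, however, that existence and regularity of the extremizer are not at issue here, since $\zeta=g(\tilde w)\in\mathring L^p(\mathbb S^2)$ is handed to us outright, and the level sets of $\tilde w$ play no role in the argument. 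Everything beyond this inequality is the convexity computation above.
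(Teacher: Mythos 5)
Your proof is correct, and it rests on the same two essential ingredients as the paper's: Burton's characterization of extremizers of linear functionals over rearrangement classes (Lemma 3 in \cite{BMA}, which is precisely what the paper cites to get $\int_{\mathbb S^2}\zeta\,(\mathcal G\zeta+\beta\mathbf p\cdot\mathbf x)\,d\sigma\le\int_{\mathbb S^2}u\,(\mathcal G\zeta+\beta\mathbf p\cdot\mathbf x)\,d\sigma$ on $\mathcal R_\zeta$, with the same caveat about $g$ being merely monotone and possibly $\pm\infty$-valued), together with the cancellation of the $\beta\mathbf p\cdot\mathbf x$ term on $\mathcal C_\zeta$ and the exact quadratic identity plus positive-definiteness of $\mathcal G$. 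Where you genuinely differ is the packaging. The paper first passes to the weak closure $\bar{\mathcal R}_\zeta\cap\mathcal C_\zeta$, uses its weak sequential compactness and convexity together with the weak continuity and strict convexity of $E$ to produce the unique minimizer $\xi$ of $E$ there, and only then combines the first-order inequality with the quadratic identity to force $E(\xi-\zeta)\le 0$ and hence $\xi=\zeta$. You dispense with the weak closure entirely: expanding $E$ about $\zeta$ at an arbitrary competitor $u\in\mathcal R_\zeta\cap\mathcal C_\zeta$, both the first-order term and $E(u-\zeta)$ are nonnegative, giving $E(u)\ge E(\zeta)$ with equality only at $u=\zeta$ — the proposition as stated, with no functional-analytic machinery. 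What the paper's detour buys is the formally stronger conclusion that $\zeta$ is the unique minimizer over $\bar{\mathcal R}_\zeta\cap\mathcal C_\zeta$, which is the form ultimately consumed by the compactness argument behind Theorem \ref{bsc1}; but nothing is lost in your version, since Proposition \ref{ps21} upgrades minimality over $\mathcal R_\zeta\cap\mathcal C_\zeta$ to exactly that statement, as you note parenthetically at the end.
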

\begin{proof}

 We will show that $\zeta$ is the unique minimizer of $E$ relative to $\bar{\mathcal R}_\zeta\cap\mathcal C_\zeta$.
 Let $\xi$ be the unique minimizer of $E$ relative to $\bar{\mathcal R}_\zeta\cap\mathcal C_\zeta$. Note that such $\xi$ exists since $\bar{\mathcal R}_\zeta\cap\mathcal C_\zeta$ is weakly sequentially compact and convex,  and $E$ is weakly continuous and strictly convex. Then
  \begin{equation}\label{twi1}
E(\zeta)\geq  E(\xi).
  \end{equation}
 According to  \cite[Lemma 3]{BMA},  the condition \eqref{nav1} in Theorem \ref{a1s} implies that
 $\zeta$ is a minimizer of the  linear functional $L_{\mathcal G\zeta+\beta\mathbf p\cdot\mathbf x}$ (cf. \eqref{lfde01})  relative to $\mathcal R_\zeta$.
Taking into account the fact that $ <\beta\mathbf p\cdot\mathbf x,\cdot>$ is constant on $\mathcal C_\zeta$, we deduce that
 $\zeta$ is a minimizer of $L_{ \mathcal G\zeta}$ relative to $\mathcal R_\zeta\cap\mathcal C_\zeta$, and thus
 $\zeta$ is a minimizer of $L_{ \mathcal G\zeta}$ relative to $\bar{\mathcal R}_\zeta\cap\mathcal C_\zeta$.
In particular,
  \begin{equation}\label{twi2}
  \int_{\mathbb S^2}\zeta \mathcal G\zeta \dd{\sigma} \leq \int_{\mathbb S^2}\xi\mathcal G\zeta \dd{\sigma}.
  \end{equation}
 On the other hand, we have the following identity:
   \begin{equation}\label{twi21}
   E(\xi)= E(\zeta)+\int_{\mathbb S^2}\mathcal G\zeta(\xi-\zeta)\dd{\sigma}+E(\xi-\zeta).
   \end{equation}
In view of \eqref{twi1}, \eqref{twi2}  and \eqref{twi21}, we obtain $E(\xi-\zeta)\leq 0$. Therefore $\zeta=\xi$ by the positive-definiteness of $\mathcal G.$ This completes the proof.

\end{proof}

\subsection{Proof  of Theorem   \ref{a2s2}}

By Theorem \ref{bsc2}, the stability assertion in Theorem   \ref{a2s2} follows from the obvious fact that $(\zeta+\mathbb E_2)\cap \mathcal R_\zeta$ is compact in $L^p(\mathbb S^2)$ and the following variational characterization.

\begin{proposition}\label{ps42}
Let $\zeta$ be as in Theorem \ref{a2s2}. Then $(\zeta+\mathbb E_2)\cap \mathcal R_\zeta$ is the set of maximizers of $E$ relative to ${\mathcal R}_\zeta\cap\mathcal C_\zeta.$
\end{proposition}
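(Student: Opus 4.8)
The plan is to identify $(\zeta+\mathbb E_2)\cap\mathcal R_\zeta$ with the maximizer set by a sharp quadratic energy estimate in which the endpoint bound $g'\le 6$ is matched exactly against the second eigenvalue $\lambda_2=6$. Write $w:=\mathcal G\zeta+\beta\mathbf p\cdot\mathbf x$, so the hypothesis reads $\zeta=g(w)$ with $g$ nondecreasing and $0\le g'\le 6$. For $u\in\mathcal R_\zeta\cap\mathcal C_\zeta$ set $h:=u-\zeta$; since $u,\zeta\in\mathcal C_\zeta$ we have $\mathbf m(h)=\mathbf 0$, i.e. $h\perp\mathbb E_1$, and $\int_{\mathbb S^2}h\,d\sigma=0$. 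Using the symmetry of $\mathcal G$ and $\mathbf m(h)=\mathbf 0$, the identity $E(u)=E(\zeta)+\int_{\mathbb S^2}\mathcal G\zeta\,h\,d\sigma+E(h)$ (cf. \eqref{twi21}) becomes
\[
E(u)-E(\zeta)=\int_{\mathbb S^2}w\,h\,d\sigma+E(h).
\]
Everything then reduces to bounding the linear term $\int_{\mathbb S^2}w\,h\,d\sigma$ from above by a negative multiple of $\|h\|_{L^2(\mathbb S^2)}^2$, and the quadratic term $E(h)$ from above by the same multiple.

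First I would record that, because $u\in\mathcal R_\zeta$ preserves the $L^2$ norm (note $\zeta\in C^1$ by the remark, so all rearrangements lie in $L^\infty\subset L^2$), one has the exact identity $\int_{\mathbb S^2}\zeta\,h\,d\sigma=-\tfrac12\|h\|_{L^2(\mathbb S^2)}^2$. The crucial step is then to prove
\[
\int_{\mathbb S^2}w\,h\,d\sigma\le\frac16\int_{\mathbb S^2}\zeta\,h\,d\sigma=-\frac1{12}\|h\|_{L^2(\mathbb S^2)}^2,
\]
equivalently, that $\zeta$ maximizes the linear functional $L_{w-\zeta/6}$ over $\mathcal R_\zeta$. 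To see this I would apply the rearrangement inequality (Lemma 3 in \cite{BMA}, used already for Theorem \ref{a1s}) to the modified multiplier $m:=w-\tfrac16\zeta=w-\tfrac16 g(w)$: as a function of the real variable $w$ its derivative is $1-\tfrac16 g'(w)\in[0,1]$, so $m$ is nondecreasing in $w$, while $\zeta=g(w)$ is also nondecreasing in $w$; hence $\zeta$ and $m$ are similarly ordered, which is exactly the condition for $\zeta$ to maximize $L_m$ relative to $\mathcal R_\zeta$.

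For the quadratic term I would expand $h$ in spherical harmonics. Since $h\perp\mathbb E_0\oplus\mathbb E_1$, writing $h=\sum_{j\ge2}h_j$ with $h_j\in\mathbb E_j$ and using $\mathcal G h_j=\lambda_j^{-1}h_j$ with $\lambda_j\ge\lambda_2=6$ gives
\[
E(h)=\frac12\sum_{j\ge2}\frac1{\lambda_j}\|h_j\|_{L^2(\mathbb S^2)}^2\le\frac1{12}\|h\|_{L^2(\mathbb S^2)}^2,
\]
with equality precisely when $h_j=0$ for all $j\ge3$, i.e. $h\in\mathbb E_2$. Combining the two bounds yields $E(u)\le E(\zeta)$, so $\zeta$ is a maximizer with maximal value $E(\zeta)$. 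If $u$ is any maximizer, equality in the quadratic bound forces $h=u-\zeta\in\mathbb E_2$, so the maximizer set is contained in $(\zeta+\mathbb E_2)\cap\mathcal R_\zeta$. Conversely, given $u=\zeta+Y$ with $Y\in\mathbb E_2$ and $u\in\mathcal R_\zeta$: since $Y\perp\mathbb E_1$ we have $u\in\mathcal C_\zeta$ automatically, and the $L^2$ identity $\int_{\mathbb S^2}\zeta Y\,d\sigma=-\tfrac12\|Y\|_{L^2(\mathbb S^2)}^2$ together with $\mathcal GY=Y/6$ gives $\int_{\mathbb S^2}w\,Y\,d\sigma=\tfrac16\int_{\mathbb S^2}\zeta Y\,d\sigma=-\tfrac1{12}\|Y\|_{L^2(\mathbb S^2)}^2$ and $E(Y)=\tfrac1{12}\|Y\|_{L^2(\mathbb S^2)}^2$, whence $E(u)=E(\zeta)$; thus every such $u$ is a maximizer, giving the reverse inclusion.

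The main obstacle is the sharp quadratic refinement of the linear estimate: the naive rearrangement inequality only gives $\int_{\mathbb S^2}w\,h\,d\sigma\le0$, which is too weak to defeat the nonnegative term $E(h)$. Extracting the extra $-\tfrac1{12}\|h\|_{L^2(\mathbb S^2)}^2$ requires the full strength of $g'\le6$ through the modified multiplier $w-\zeta/6$ (or, alternatively, through the $\tfrac16$-strong convexity of a primitive of $g^{-1}$), and this must be balanced exactly against the eigenvalue bound $\lambda_2=6$; it is here that the constraint $\mathcal C_\zeta$ is indispensable, since deleting the $\mathbb E_1$ modes is precisely what upgrades the relevant constant from $\lambda_1=2$ to $\lambda_2=6$. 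Some care is also needed when invoking the rearrangement inequality if $g$ has flat parts ($g'=0$ or $g'=6$), where the \emph{similarly ordered} formulation, rather than a strict ``$\zeta$ is an increasing function of $m$'' relation, is the correct tool.
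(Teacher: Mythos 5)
Your proposal is correct, and it reaches the result by a genuinely different route than the paper. The paper's proof runs through convex duality: it introduces the primitive $G$ of $g$ and its Legendre transform $\hat G$, exploits the rearrangement invariance of the Casimir integral $\int_{\mathbb S^2}\hat G(\cdot)\,d\sigma$ together with the pointwise duality inequality to arrive at the master estimate \eqref{js19}, bounds the $G$-difference by $G(s+\tau)\le G(s)+g(s)\tau+3\tau^2$ (this is where $g'\le 6$ enters, cf. \eqref{js20}), and closes with the Poincar\'e inequality \eqref{js21} on the stream-function side; the reverse inclusion is then obtained by tracking when \eqref{js19} and \eqref{js20} become equalities (the claim \eqref{clm01}). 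You stay entirely on the vorticity side: the duality step is replaced by the exact identity $\int_{\mathbb S^2}\zeta h\,d\sigma=-\tfrac12\|h\|_{L^2(\mathbb S^2)}^2$ (rearrangements preserve the $L^2$ norm) combined with the comonotone rearrangement inequality for the modified multiplier $w-\zeta/6$ --- which is where $0\le g'\le 6$ enters for you --- and the Poincar\'e inequality is replaced by the equivalent spectral bound $E(h)\le\tfrac1{12}\|h\|_{L^2(\mathbb S^2)}^2$ with equality exactly on $\mathbb E_2$. What your route buys: no Legendre transform, hence none of the paper's technical extension of $g$ outside $[m_1,m_2]$ needed to make $\hat G$ finite, and the reverse inclusion (that $\zeta+Y$ with $Y\in\mathbb E_2$, $\zeta+Y\in\mathcal R_\zeta$, is a maximizer) becomes a two-line computation rather than an equality-tracking argument through the duality chain. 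What it costs: your key linear estimate requires the ``similarly ordered''/comonotone form of the rearrangement inequality, which goes slightly beyond a literal reading of Lemma 3 in \cite{BMA} when $g$ has flat parts, since $\zeta$ need not then be a function of $w-\zeta/6$; you flag this correctly, and it is repairable either by the layer-cake/comonotonicity argument or, more simply, by using the multiplier $w-g(w)/(6+\epsilon)$, which is a strictly increasing function of $w$ so that Burton's lemma applies verbatim, and letting $\epsilon\to 0$. One further point in favor of the paper's packaging: its computation \eqref{js19} is reused essentially verbatim in the proof of Proposition \ref{ps43} (Theorem \ref{a2s3}), so the duality framework treats the endpoint and non-endpoint cases uniformly, whereas your argument is tailored to the exact constant $6$.
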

\begin{proof}
Define $G(s)=\int_0^sg(\tau)\dd{\tau}$. Let $\hat G$ be the Legendre transform of $G$, i.e.,
\begin{equation}\label{ltsfm1}
\hat {G}(s):=\sup_{\tau\in\mathbb R}(s\tau-{G}(\tau)),\quad s\in\mathbb R.
\end{equation}
Note that to make $\hat {G}$ well-defined, one may need to redefine the values of $g$ outside the interval $[m_1,m_2]$ such that $g(s)$ grows linearly as $|s|\to\infty$, where
\[m_1:=\min_{\mathbf x\in\mathbb S^2}(\mathcal G\zeta(\mathbf x)+\beta\mathbf p\cdot\mathbf x),\quad m_2:=\max_{\mathbf x\in\mathbb S^2}(\mathcal G\zeta(\mathbf x)+\beta\mathbf p\cdot\mathbf x).\]
 For example, we can redefine $g$   on $(-\infty,m_1]$ as follows (the case of $[m_2,+\infty)$ can be handled similarly):
\begin{itemize}
  \item [(i)]If $g'(m_1)>0$,  define
\begin{equation}
g(s)=
g'(m_1)(s-m_1)+g(m_1),\quad s\in(-\infty,m_1).
\end{equation}
  \item [(ii)]If $g'(m_1)=0$,  define
\begin{equation}
g(s)=
\begin{cases}
g(m_1)-(s-m_1)^2&\mbox{ if } m_1-1\leq s< m_1,\\
2(s-m_1+1)+g(m_1)-1 &\mbox{ if } s< m_1-1.
\end{cases}
\end{equation}
\end{itemize}
From the definition of Legendre transform, one has
\[\hat {G}(s)+  G (\tau)\geq s\tau \quad\forall\,s,\tau\in\mathbb R,\]
and the equality holds if and only if $s= g(\tau).$ In particular
\[\hat {G}(\zeta )+  G  (\mathcal G \zeta +\beta\mathbf p\cdot\mathbf x)=  \zeta (\mathcal G \zeta+\beta\mathbf p\cdot\mathbf x).\]
Besides, one can verify that
 $\hat{G}$ is locally Lipschitz continuous.

Fix a function $\varrho$ such that $ \zeta+\varrho\in\mathcal R_{\zeta}\cap \mathcal C_\zeta$.  Then it is easy to see that
\begin{equation}\label{mp00}
\mathbf m(\varrho) =\mathbf 0.
\end{equation}
 We compute as follows:
    \begin{equation}\label{js19}
    \begin{split}
  & E( \zeta)- E( \zeta+\varrho)\\
  =&\frac{1}{2}\int_{\mathbb S^2} \left(\zeta\mathcal G \zeta -( \zeta+\varrho)\mathcal G ( \zeta+\varrho)\right) \dd{\sigma}+\int_{\mathbb S^2}\left( \hat {G}( \zeta+\varrho)-\hat {G}( \zeta)\right) \dd{\sigma} \\
 \geq &\frac{1}{2}\int_{\mathbb S^2} \left(- 2\varrho\mathcal G \zeta-\varrho\mathcal G\varrho \right)\dd{\sigma}+\int_{\mathbb S^2}\left(( \zeta+\varrho)(\mathcal G( \zeta+\varrho)+\beta\mathbf p\cdot\mathbf x)- G(\mathcal G( \zeta+\varrho)+\beta\mathbf p\cdot\mathbf x) \right)\dd{\sigma}\\
 &-\int_{\mathbb S^2}\left(\zeta(\mathcal G\zeta+\beta\mathbf p\cdot\mathbf x)-G(\mathcal G\zeta+\beta\mathbf p\cdot\mathbf x)\right) \dd{\sigma} \\
=& \frac{1}{2}\int_{\mathbb S^2}\varrho\mathcal G\varrho \dd{\sigma}+\int_{\mathbb S^2}\varrho\mathcal G\zeta \dd{\sigma}-\int_{\mathbb S^2} \left(G(\mathcal G(\zeta+\varrho)+\beta\mathbf p\cdot\mathbf x)-G(\mathcal G\zeta+\beta\mathbf p\cdot\mathbf x) \right)\dd{\sigma}.
\end{split}
\end{equation}
Since $  g'\leq 6$, we  have that
\[ G(s+\tau)\leq  G(s)+ g(s) \tau+3\tau^2,\quad\forall\,s,\tau\in\mathbb R.\]
Therefore
\begin{equation}\label{js20}
\begin{split}
&\int_{\mathbb S^2}   \left(G(\mathcal G(\zeta+\varrho)+\beta\mathbf p\cdot\mathbf x)-  G(\mathcal G\zeta+\beta\mathbf p\cdot\mathbf x) \right)\dd{\sigma}\\
 \leq& \int_{\mathbb S^2}   g(\mathcal G \zeta +\beta\mathbf p\cdot\mathbf x)\mathcal G\varrho \dd{\sigma}+3\int_{\mathbb S^2}(\mathcal G\varrho)^2 \dd{\sigma}\\
 =&\int_{\mathbb S^2} \zeta\mathcal G\varrho \dd{\sigma}+3\int_{\mathbb S^2}(\mathcal G\varrho)^2 \dd{\sigma}.
 \end{split}
\end{equation}
Inserting \eqref{js20} into \eqref{js19}, we have that
\[
 E( \zeta)- E( \zeta+\varrho)\geq \frac{1}{2}\int_{\mathbb S^2} \varrho\mathcal G\varrho \dd{\sigma}-3\int_{\mathbb S^2}(\mathcal G\varrho)^2 \dd{\sigma}.
 \]
In view of \eqref{mp00},  the following Poincar\'e inequality  holds:
\begin{equation}\label{js21}
\int_{\mathbb S^2}(\mathcal G \varrho)^2 \dd{\sigma}\leq \frac{1}{6}\int_{\mathbb S^2} \varrho\mathcal G\varrho \dd{\sigma}.
\end{equation}
So we obtain
$
 E( \zeta)\geq E( \zeta+\varrho),
$
and the equality holds if and only if \eqref{js19}-\eqref{js21} are all equalities.

Below we analyze \eqref{js19}-\eqref{js21}.
It is clear that \eqref{js21} is an equality if and only if  $\varrho\in\mathbb E_2.$ Now we claim that
\begin{equation}\label{clm01}
\mbox{if $\varrho\in\mathbb E_2,$ then \eqref{js19} and \eqref{js20} are  both  equalities.}
\end{equation}
In fact, since $\zeta+\varrho\in\mathcal R_{\zeta}$, one has
\[ \int_{\mathbb S^2}\hat {G}( \zeta+\varrho)\dd{\sigma}=\int_{\mathbb S^2}\hat {G}( \zeta)\dd{\sigma}.\]
On the other hand, in view of \eqref{js20},
\begin{align*}
  &\int_{\mathbb S^2}\left(\hat {G}( \zeta+\varrho)-\hat {G}( \zeta)\right)\dd{\sigma} \\
 \geq & \int_{\mathbb S^2}\left(( \zeta+\varrho)(\mathcal G( \zeta+\varrho)+\beta\mathbf p\cdot\mathbf x)-  G(\mathcal G( \zeta+\varrho)+\beta\mathbf p\cdot\mathbf x) \right)\dd{\sigma}\\
 &- \int_{\mathbb S^2}\left(\zeta(\mathcal G\zeta+\beta\mathbf p\cdot\mathbf x)- G(\mathcal G\zeta+\beta\mathbf p\cdot\mathbf x) \right)\dd{\sigma}\\
= &\int_{\mathbb S^2}\left(( \zeta+\varrho) \mathcal G( \zeta+\varrho) -\zeta \mathcal G\zeta\right)  \dd{\sigma}-\int_{\mathbb S^2} \left(G(\mathcal G( \zeta+\varrho)+\beta\mathbf p\cdot\mathbf x)- G(\mathcal G\zeta+\beta\mathbf p\cdot\mathbf x) \right)\dd{\sigma}\\
\ge&\int_{\mathbb S^2}\left(( \zeta+\varrho) \mathcal G( \zeta+\varrho) -\zeta \mathcal G\zeta\right)  \dd{\sigma}-\int_{\mathbb S^2} \zeta\mathcal G\varrho \dd{\sigma}+3\int_{\mathbb S^2}(\mathcal G\varrho)^2 \dd{\sigma}\\
=&\int_{\mathbb S^2} \left(\varrho  \mathcal G\varrho +\zeta\mathcal G\varrho\right) \dd{\sigma} -3\int_{\mathbb S^2}(\mathcal G\varrho)^2 \dd{\sigma}\\
=&\frac{1}{6}\int_{\mathbb S^2} \left(\varrho^2 +\zeta \varrho\right) \dd{\sigma}-\frac{1}{12}\int_{\mathbb S^2} \varrho^2   \dd{\sigma}\\
=&\frac{1}{12}\int_{\mathbb S^2}\varrho^2 \dd{\sigma}+\frac{1}{6}\int_{\mathbb S^2} \zeta \varrho \dd{\sigma}\\
=&0.
 \end{align*}
 Note that the last equality follows from the fact that  $\|\zeta+\varrho\|_{L^2(\mathbb S^2)}=\|\zeta\|_{L^2(\mathbb S^2)}$.
So \eqref{clm01} has been verified.

To summarize, we have proved that $\zeta+\varrho\in\mathcal R_\zeta\cap\mathcal C_\zeta$ is  a maximizer of $E$ relative to $\mathcal R_\zeta\cap\mathcal C_\zeta$ if and only if  $\varrho\in\mathbb E_2.$ In other words,  the set of maximizers of $E$ relative to $\mathcal R_\zeta\cap\mathcal C_\zeta$ is exactly  $(\zeta+\mathbb E_2)\cap \mathcal R_\zeta\cap\mathcal C_\zeta$. Notice that
\[(\zeta+\mathbb E_2)\cap \mathcal R_\zeta\cap\mathcal C_\zeta=(\zeta+\mathbb E_2)\cap \mathcal R_\zeta,\]
 since $\zeta+\varrho\in \mathcal C_\zeta$ holds automatically  for any $\varrho\in\mathbb E_2$. This completes the proof.
\end{proof}

Next we show that the set $(\zeta+\mathbb E_2)\cap \mathcal R_\zeta$ is a singleton if
$\sup_{\mathbb R}g' <6.$
\begin{proposition}
In the setting of Theorem \ref{a2s2}, if  additionally
$\sup_{\mathbb R}g' <6,$
then
\[(\zeta+\mathbb E_2)\cap \mathcal R_\zeta=\{\zeta\}.\]
\end{proposition}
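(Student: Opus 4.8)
The plan is to argue via a quantitative sharpening of the energy estimate already carried out in the proof of Proposition \ref{ps42}. Suppose $\zeta+\varrho\in(\zeta+\mathbb E_2)\cap\mathcal R_\zeta$ with $\varrho\in\mathbb E_2$; I want to conclude $\varrho\equiv 0$. Writing $\sup_{\mathbb R}g'=6-2\delta$ with $\delta>0$, the key observation is that the bound $g'\leq 6$ entered the earlier computation only through the Taylor-type inequality $G(s+\tau)\leq G(s)+g(s)\tau+3\tau^2$, and under the strict uniform bound this sharpens to
\[
G(s+\tau)\leq G(s)+g(s)\tau+(3-\delta)\tau^2\qquad\forall\,s,\tau\in\mathbb R,
\]
since $g(s+r)-g(s)=\int_s^{s+r}g'(u)\,du\leq(6-2\delta)r$ for $r\geq 0$, and symmetrically for $r\leq 0$. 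Since $\zeta+\varrho\in\mathcal R_\zeta$ with $\varrho\in\mathbb E_2$, Proposition \ref{ps42} guarantees that $\zeta+\varrho$ is a maximizer of $E$ relative to $\mathcal R_\zeta\cap\mathcal C_\zeta$, so that $E(\zeta+\varrho)=E(\zeta)$.

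Concretely, I would reproduce the chain \eqref{js19}, in which the cross term $\int_{\mathbb S^2}\varrho\mathcal G\zeta\,d\sigma$ cancels against $\int_{\mathbb S^2}\zeta\mathcal G\varrho\,d\sigma$ by the symmetry of $\mathcal G$, and replace the estimate \eqref{js20} by its sharpened form above. This yields
\[
E(\zeta)-E(\zeta+\varrho)\geq \frac12\int_{\mathbb S^2}\varrho\mathcal G\varrho\,d\sigma-(3-\delta)\int_{\mathbb S^2}(\mathcal G\varrho)^2\,d\sigma.
\]
Because $\varrho\in\mathbb E_2$ is an eigenfunction of $-\Delta$ with eigenvalue $6$, one has $\mathcal G\varrho=\varrho/6$, so the right-hand side becomes
\[
\frac{1}{12}\int_{\mathbb S^2}\varrho^2\,d\sigma-\frac{3-\delta}{36}\int_{\mathbb S^2}\varrho^2\,d\sigma=\frac{\delta}{36}\int_{\mathbb S^2}\varrho^2\,d\sigma.
\]
Combining this with $E(\zeta+\varrho)=E(\zeta)$ forces $\int_{\mathbb S^2}\varrho^2\,d\sigma=0$, hence $\varrho\equiv 0$ and $(\zeta+\mathbb E_2)\cap\mathcal R_\zeta=\{\zeta\}$.

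I expect the computation to be routine once the refined quadratic estimate is in hand; the one place demanding care is the extension of $g$ outside the interval $[m_1,m_2]$ used in Proposition \ref{ps42} to make the Legendre transform $\hat G$ well-defined. The sharpened Taylor bound must hold for all $\tau$, and in particular at arguments $\mathcal G(\zeta+\varrho)+\beta\mathbf p\cdot\mathbf x$ that may fall outside $[m_1,m_2]$, so the extension must be chosen to preserve $\sup g'<6$; the linear and quadratic pieces prescribed there have slopes at most $\max\{\sup_{[m_1,m_2]}g',2\}<6$, so this is automatic, and the constant $\delta$ should be taken with respect to the extended $g$. This bookkeeping is the main, though minor, obstacle; the essential gain is simply that strict monotonicity $\sup g'<6$ upgrades the previous equality $E(\zeta)=E(\zeta+\varrho)$ on $\mathbb E_2$ into a strict spectral gap.
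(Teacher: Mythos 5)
Your proof is correct, but it takes a genuinely different route from the paper's. The paper argues through the equality case of the Legendre-transform inequality: since $\zeta+\varrho\in(\zeta+\mathbb E_2)\cap\mathcal R_\zeta$ forces equality in \eqref{js19} (this is the content of \eqref{clm01}), the pointwise equality condition for the Legendre transform yields $\zeta+\varrho=g(\mathcal G(\zeta+\varrho)+\beta\mathbf p\cdot\mathbf x)$ a.e., i.e.\ the competitor solves the same semilinear equation as $\zeta$; subtracting the two equations and using the mean value theorem together with $\mathcal G\varrho=\varrho/6$ gives the pointwise contraction $|\varrho|\leq\tfrac{1}{6}\sup_{\mathbb R}g'\,|\varrho|$, hence $\varrho\equiv 0$. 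You instead stay at the level of energies: you sharpen the quadratic bound \eqref{js20} using $\sup_{\mathbb R}g'=6-2\delta$, derive the gap $E(\zeta)-E(\zeta+\varrho)\geq\tfrac{\delta}{36}\int_{\mathbb S^2}\varrho^2\,d\sigma$ for $\varrho\in\mathbb E_2$, and close with the fact (Proposition \ref{ps42}) that all elements of $(\zeta+\mathbb E_2)\cap\mathcal R_\zeta$ are maximizers and so share the energy value $E(\zeta)$; your bookkeeping about the extension of $g$ outside $[m_1,m_2]$ is exactly the right point to worry about and is handled correctly, since the prescribed extensions have slope at most $\max\{\sup_{[m_1,m_2]}g',2\}<6$. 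As for what each approach buys: the paper's equality-case argument produces the pointwise Euler--Lagrange identity \eqref{sfyse1}, which is reused verbatim in the proof of the rigidity theorem (Theorem \ref{rgr}(ii)); your quantitative estimate is arguably more robust, because combined with the Poincar\'e inequality \eqref{js21} it gives $E(\zeta)-E(\zeta+\varrho)\geq\tfrac{\delta}{6}\int_{\mathbb S^2}\varrho\mathcal G\varrho\,d\sigma$ for \emph{every} competitor $\zeta+\varrho\in\mathcal R_\zeta\cap\mathcal C_\zeta$, so by the positive-definiteness \eqref{podf01} of $\mathcal G$ the function $\zeta$ is the unique maximizer of $E$ relative to $\mathcal R_\zeta\cap\mathcal C_\zeta$, and the proposition follows without ever isolating the case $\varrho\in\mathbb E_2$.
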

\begin{proof}
For any  $\zeta+\varrho\in (\zeta+\mathbb E_2)\cap \mathcal R_\zeta,$ by \eqref{clm01} we know that the inequality in \eqref{js19} is an equality, i.e.,
\[\int_{\mathbb S^2} \hat G(\zeta+\varrho)\dd{\sigma}=\int_{\mathbb S^2}\left(( \zeta+\varrho)(\mathcal G( \zeta+\varrho)+\beta\mathbf p\cdot\mathbf x)- G(\mathcal G( \zeta+\varrho)+\beta\mathbf p\cdot\mathbf x)\right) \dd{\sigma},\]
which implies that
\[\hat G(\zeta+\varrho)=( \zeta+\varrho)(\mathcal G( \zeta+\varrho)+\beta\mathbf p\cdot\mathbf x)- G(\mathcal G( \zeta+\varrho)+\beta\mathbf p\cdot\mathbf x).\]
Hence $\varrho$ satisfies
\begin{equation}\label{sfyse1}
\zeta+\varrho=g(\mathcal G( \zeta+\varrho)+\beta\mathbf p\cdot\mathbf x).
\end{equation}
Taking into account  $\zeta= g(\mathcal G\zeta+\beta\mathbf p\cdot\mathbf x)$, we have that
\[\varrho= g(\mathcal G( \zeta+\varrho)+\beta\mathbf p\cdot\mathbf x)- g(\mathcal G \zeta  +\beta\mathbf p\cdot\mathbf x).\]
Therefore
\[|\varrho|=| g(\mathcal G(\zeta+\varrho)+\beta\mathbf p\cdot\mathbf x)- g(\mathcal G\zeta+\beta\mathbf p\cdot\mathbf x)|\leq  \sup_{\mathbb R} g' |\mathcal G\varrho|=\frac{ \sup_{\mathbb R} g'}{6}|\varrho|,\]
which yields $\varrho\equiv 0$ provided that $\sup_{\mathbb R}g'<6.$

\end{proof}

\subsection{Proof  of Theorem   \ref{a2s3}}

By Theorem \ref{bsc2}, we only need to prove the following  proposition.

\begin{proposition}\label{ps43}
Let $\zeta$ be as in Theorem \ref{a2s3}. Then $\zeta$ is an isolated local maximizer of $E$ relative to ${\mathcal R}_\zeta\cap\mathcal C_\zeta,$ i.e., there exists some $\tau>0$, such that for any $v\in\mathcal R_\zeta\cap\mathcal C_\zeta$ with
\[\|v-\zeta\|_{L^p(\mathbb S^2)}<\tau,\quad v\neq \zeta,\] it holds that
\[E(v)<E(\zeta).\]

\end{proposition}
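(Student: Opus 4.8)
The plan is to analyze the second variation of $E$ at $\zeta$ along admissible perturbations and show it is strictly negative under the spectral condition \eqref{pe1}. Following the structure of Proposition \ref{ps42}, I would fix $\varrho$ with $\zeta+\varrho\in\mathcal R_\zeta\cap\mathcal C_\zeta$, so that automatically $\mathbf m(\varrho)=\mathbf 0$ and $\|\zeta+\varrho\|_{L^2}=\|\zeta\|_{L^2}$. Using the Legendre transform $\hat G$ of $G(s)=\int_0^s g$ (with $g$ suitably extended to grow linearly at infinity, exactly as in Proposition \ref{ps42}), I would write the energy gap $E(\zeta)-E(\zeta+\varrho)$ in the same form as \eqref{js19}, exploiting that $\int_{\mathbb S^2}\hat G(\zeta+\varrho)\,d\sigma=\int_{\mathbb S^2}\hat G(\zeta)\,d\sigma$ since $\zeta+\varrho\in\mathcal R_\zeta$.

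The key difference from Proposition \ref{ps42} is that here $g$ is only \emph{increasing} (so $g'\ge0$) without the uniform bound $g'\le6$; the control must instead come from the coercivity assumption \eqref{pe1}. First I would establish the pointwise Taylor-type estimate for $G$: since $g$ is $C^1$ and increasing, a second-order expansion gives
\[
G(s+\tau)=G(s)+g(s)\tau+\tfrac12 g'(\xi)\tau^2
\]
for some $\xi$ between $s$ and $s+\tau$. Substituting $s=\mathcal G\zeta+\beta\mathbf p\cdot\mathbf x$ and $\tau=\mathcal G\varrho$, and recalling $g(\mathcal G\zeta+\beta\mathbf p\cdot\mathbf x)=\zeta$, the energy gap reduces to a quadratic form in $\mathcal G\varrho$ of the shape
\[
E(\zeta)-E(\zeta+\varrho)=\tfrac12\int_{\mathbb S^2}\varrho\,\mathcal G\varrho\,d\sigma-\tfrac12\int_{\mathbb S^2}g'(\cdot)\,(\mathcal G\varrho)^2\,d\sigma+o(\|\varrho\|^2),
\]
where the error is genuinely higher order because $g'$ is continuous. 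Setting $\varphi=\mathcal G\varrho\in\mathring W^{2,p}$, one has $\varrho=-\Delta\varphi$ and $\int\varrho\,\mathcal G\varrho=\int|\nabla\varphi|^2$, so the leading quadratic form is exactly $\tfrac12\int_{\mathbb S^2}|\nabla\varphi|^2-g'(\mathcal G\zeta+\beta\mathbf p\cdot\mathbf x)\varphi^2\,d\sigma$, which is precisely the object in \eqref{pe1}.

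I would then check that the constraints transfer correctly: $\mathbf m(\varrho)=\mathbf 0$ forces $\mathbf m(\varphi)=\mathbf 0$ as well (since $\varrho=-\Delta\varphi$ and the first eigenfunctions $x_i$ satisfy $-\Delta x_i=2x_i$, one gets $\int\varphi x_i\,d\sigma=\tfrac12\int\varrho x_i\,d\sigma=0$), so that the test function $\varphi$ lies in the space where \eqref{pe1} applies. Hence the leading quadratic form is bounded below by $\tfrac{c}{2}\int\varphi^2\,d\sigma=\tfrac{c}{2}\|\mathcal G\varrho\|_{L^2}^2$, which is comparable to $\|\varrho\|_{L^2}^2$ by elliptic regularity and the compactness of $\mathcal G$. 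This gives strict positivity of $E(\zeta)-E(\zeta+\varrho)$ for small nonzero $\varrho$, hence $\zeta$ is an isolated local maximizer.

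The main obstacle will be controlling the error term uniformly and converting the $L^2$-based coercivity into a statement valid on an $L^p$-neighborhood. The quadratic estimate lives naturally in $L^2$, but Theorem \ref{bsc2} and the conclusion are phrased in $L^p$; I would handle this by first obtaining the strict inequality for $\varrho$ small in $L^2$, then using the uniform $L^p$-boundedness of $\mathcal R_\zeta$ together with interpolation (or the coincidence of weak and strong topologies on $\mathcal R_\zeta$) to pass between the norms, ensuring the remainder $o(\|\varrho\|^2)$ can genuinely be absorbed. Care is also needed at the endpoint: since only $g'\ge0$ is assumed, I must verify that the redefinition of $g$ outside $[m_1,m_2]$ (as in Proposition \ref{ps42}) does not affect the relevant values of $g'$ on the range of $\mathcal G\zeta+\beta\mathbf p\cdot\mathbf x$, so that the spectral hypothesis \eqref{pe1} is the operative one throughout.
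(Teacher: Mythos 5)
Your overall strategy tracks the paper's proof: reduce to the inequality \eqref{js19} via the Legendre transform, Taylor-expand $G$ to second order, substitute $\varphi=\mathcal G\varrho$ (your verification that $\mathbf m(\varrho)=\mathbf 0$ forces $\mathbf m(\varphi)=\mathbf 0$ is correct and is exactly what makes \eqref{pe1} applicable), and absorb the remainder. However, your final absorption step contains a genuine error. You claim that $\|\mathcal G\varrho\|_{L^2}^2$ is ``comparable to $\|\varrho\|_{L^2}^2$ by elliptic regularity and the compactness of $\mathcal G$.'' This is false, and compactness is precisely the reason it fails: $\mathcal G$ has eigenvalues $1/\lambda_j=1/\bigl(j(j+1)\bigr)\to 0$, so for $\varrho$ a degree-$j$ spherical harmonic one has $\|\mathcal G\varrho\|_{L^2}=\|\varrho\|_{L^2}/\lambda_j$, which is arbitrarily small relative to $\|\varrho\|_{L^2}$; no bound of the form $\|\mathcal G\varrho\|_{L^2}\geq C\|\varrho\|_{L^2}$ can hold. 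Consequently the logic ``main term $\geq \frac{c}{2}\|\mathcal G\varrho\|_{L^2}^2$, remainder $=o(\|\varrho\|^2)$, hence positivity for small $\varrho$'' does not close: a remainder that is merely $o(\|\varrho\|_{L^p}^2)$ can dominate $\frac{c}{2}\|\mathcal G\varrho\|_{L^2}^2$ for highly oscillatory $\varrho$ of fixed small $L^p$ norm. Your proposed repair (get the inequality for $\varrho$ small in $L^2$, then interpolate between $L^p$ and $L^2$) does not address this, since the mismatch is between $\|\mathcal G\varrho\|_{L^2}^2$ and $\|\varrho\|^2$ in \emph{any} of these norms. (A lesser point: the reduction after the Legendre step is an inequality $\geq$, not an equality, but that is harmless.)

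The fix is to never discard the structure of the Taylor remainder: it is exactly
\[
\frac12\int_{\mathbb S^2}\bigl[g'(\mathcal G\zeta+\beta\mathbf p\cdot\mathbf x+\theta\mathcal G\varrho)-g'(\mathcal G\zeta+\beta\mathbf p\cdot\mathbf x)\bigr](\mathcal G\varrho)^2\,d\sigma ,
\]
i.e.\ a small coefficient multiplying the \emph{same} weight $(\mathcal G\varrho)^2$ that appears in the coercive lower bound $\frac{c}{2}\int_{\mathbb S^2}(\mathcal G\varrho)^2\,d\sigma$ coming from \eqref{pe1}. Since $\mathcal G$ is compact from $\mathring L^p(\mathbb S^2)$ into $\mathring L^\infty(\mathbb S^2)$, $\|\mathcal G\varrho\|_{L^\infty}\to 0$ as $\|\varrho\|_{L^p}\to 0$, and since $g'$ is continuous (hence uniformly continuous on the relevant compact interval), there is $\tau>0$ such that the bracket has $L^\infty$ norm below $c/2$ whenever $\|\varrho\|_{L^p}<\tau$; this is \eqref{pe102} in the paper. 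The remainder is then at most $\frac{c}{4}\int_{\mathbb S^2}(\mathcal G\varrho)^2\,d\sigma$ and is absorbed, giving $E(\zeta)-E(\zeta+\varrho)\geq \frac{c}{4}\int_{\mathbb S^2}(\mathcal G\varrho)^2\,d\sigma>0$ for $0<\|\varrho\|_{L^p}<\tau$, as in \eqref{pe103}. Note that no comparison with $\|\varrho\|_{L^p}^2$ is needed at all: strict positivity of the energy gap is all that ``isolated local maximizer'' requires, so the $L^2$-versus-$L^p$ detour in your last paragraph is unnecessary once the remainder is kept in this form.
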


\begin{proof}
 For $ \zeta+\varrho\in\mathcal R_{\zeta}\cap \mathcal C_\zeta$, repeating the computations  in \eqref{js19}, we have that
     \begin{equation}\label{js69}
    \begin{split}
  & E( \zeta)- E( \zeta+\varrho)\\
\ge & \frac{1}{2}\int_{\mathbb S^2}\varrho\mathcal G\varrho \dd{\sigma}+\int_{\mathbb S^2}\varrho\mathcal G\zeta \dd{\sigma}-\int_{\mathbb S^2}  \left( G(\mathcal G(\zeta+\varrho)+\beta\mathbf p\cdot\mathbf x)-  G(\mathcal G\zeta+\beta\mathbf p\cdot\mathbf x) \right)\dd{\sigma}.
\end{split}
\end{equation}
By Taylor's theorem,
\begin{equation}\label{js70}
\begin{split}
 &\int_{\mathbb S^2}
  \left(G(\mathcal G(\zeta+\varrho)+\beta\mathbf p\cdot\mathbf x)-  G(\mathcal G\zeta+\beta\mathbf p\cdot\mathbf x) \right)\dd{\sigma}\\
  =&\int_{\mathbb S^2} g(\mathcal G\zeta+\beta\mathbf p\cdot\mathbf x)\mathcal G\varrho \dd{\sigma}+\frac{1}{2}\int_{\mathbb S^2} g'(\mathcal G\zeta+\beta\mathbf p\cdot\mathbf x+\tau\mathcal G\varrho)(\mathcal G\varrho)^2 \dd{\sigma}\\
  =&\int_{\mathbb S^2}\zeta\mathcal G\varrho \dd{\sigma}+\frac{1}{2}\int_{\mathbb S^2}  g'(\mathcal G\zeta+\beta\mathbf p\cdot\mathbf x+\tau\mathcal G\varrho)(\mathcal G\varrho)^2 \dd{\sigma},
 \end{split}
\end{equation}
where $\tau\in (0,1)$ depends on $\varrho.$
Inserting \eqref{js70} into \eqref{js69}, we have that
\begin{equation}\label{pe101}
 E( \zeta)- E( \zeta+\varrho)\geq \frac{1}{2}\int_{\mathbb S^2}\varrho\mathcal G\varrho \dd{\sigma}-\frac{1}{2}\int_{\mathbb S^2} g'(\mathcal G\zeta+ \beta\mathbf p\cdot\mathbf x+\tau\mathcal G\varrho)(\mathcal G\varrho)^2 \dd{\sigma}.
\end{equation}
Since $\mathcal G$ is  a compact operator from $\mathring L^p(\mathbb S^2)$ into $\mathring L^\infty(\mathbb S^2)$, one has
\[  \|\mathcal G\varrho\|_{L^\infty(\mathbb S^2)}\to 0\quad \mbox{as}\quad \|\varrho\|_{L^p(\mathbb S^2)}\to 0.\]
So there exists some  $r>0$ such that
 \begin{equation}\label{pe102}
 \|g'(\mathcal G\zeta+ \beta\mathbf p\cdot\mathbf x+\tau\mathcal G\varrho)-g'(\mathcal G\zeta+ \beta\mathbf p\cdot\mathbf x)\|_{L^\infty(\mathbb S^2)}<\frac{c}{2}
 \end{equation}
 as long as $\|\varrho\|_p<r$, where $c$ is the positive number in \eqref{pe1}.
 From \eqref{pe101} and \eqref{pe102}, we obtain
 \begin{equation}\label{pe103}
 E( \zeta)- E( \zeta+\varrho)\geq  \frac{c}{4}\int_{\mathbb S^2}(\mathcal G\varrho)^2 \dd{\sigma}>0
\end{equation}
 if $0<\|\varrho\|_{L^p(\mathbb S^2)}<r$. This completes the proof.
\end{proof}

\section{Proof of rigidity theorem}\label{sc5}

In this section, we give the proof of Theorem \ref{rgr} based on the variational characterizations in Propositions \ref{ps31}, \ref{ps42} and \ref{ps43}.

\begin{proof}[Proof of Theorem \ref{rgr}(i)]
 If $\mathbf m(\zeta)=\mathbf 0,$ then for any rigid rotation $\mathsf R\in\mathbb S\mathbb O(3),$ one has
 \begin{equation}\label{skl1}
 \mathbf m(\zeta\circ\mathsf R)=\int_{\mathbb S^2}\mathbf x\zeta(\mathsf R\mathbf x)\dd{\sigma}=\mathsf R^{-1}\int_{\mathbb S^2}\mathbf x\zeta(\mathbf x)\dd{\sigma}=\mathbf 0,
 \end{equation}
 which means $\zeta\circ\mathsf R\in\mathcal C_\zeta.$ Moreover, it is clear that \[\zeta\circ\mathsf R\in\mathcal R_\zeta,\quad E(\zeta\circ\mathsf R)=E(\zeta).\]
  So $\zeta\circ\mathsf R$ is a minimizer of $E$ relative to $\mathcal R_\zeta\cap\mathcal C_\zeta$. By uniqueness of the minimizer in Proposition \ref{ps31}, we deduce that $\zeta\circ\mathsf R=\zeta.$ Since $\mathsf R\in\mathbb S\mathbb O(3)$ is arbitrary, we must have $\zeta\equiv 0.$ So Theorem \ref{rgr}(i) holds in this case.

Below we assume that
\begin{equation}\label{qdp1}
\mathbf m(\zeta)\neq\mathbf 0.
\end{equation}
Define
 \begin{equation}\label{qd0}
 \mathbf q:=\frac{\mathbf m(\zeta)}{|\mathbf m(\zeta)|}.
 \end{equation}
 Then $\zeta\circ\mathsf R^{\mathbf q}_\theta\in\mathcal C_\zeta$ for any $\theta\in\mathbb R$. In fact,
  \[\mathsf m(\zeta\circ\mathsf R^{\mathbf q}_\theta)=\int_{\mathbb S^2}\mathbf x\zeta(\mathsf R^{\mathbf q}_{\theta}\mathbf x)\dd{\sigma}=\mathsf R^{\mathbf q}_{-\theta}\int_{\mathbb S^2}\mathbf x\zeta(\mathbf x)\dd{\sigma}=\mathsf R^{\mathbf q}_{-\theta}\mathbf m(\zeta)=\mathbf m(\zeta).\]
 Besides, it is clear that
\[\zeta\circ\mathsf R^{\mathbf q}_\theta\in\mathcal R_\zeta,\quad E(\zeta\circ\mathsf R^{\mathbf q}_\theta)=E(\zeta).\]
  So   $\zeta\circ\mathsf R^{\mathbf q}_\theta$ is a minimizer of
   $E$ relative to $\mathcal R_\zeta\cap\mathcal C_\zeta$ for any $\theta\in\mathbb R$. By uniqueness of the minimizer again,  $\zeta$ must be rotationally invariant around $\mathbf q,$ i.e.,
 \begin{equation}\label{rzta1}
 \zeta=\zeta\circ\mathsf R^{\mathbf q}_\theta  \quad\forall\, \theta\in\mathbb R,
 \end{equation}
  which in combination with \eqref{nav1} implies that
 \begin{equation}\label{rzta2}
 g(\mathcal G\zeta+\beta\mathbf p\cdot\mathbf x)=g(\mathcal G\zeta+\beta\mathbf p\cdot(\mathsf R^{\mathbf q}_\theta\mathbf x))\quad\forall\,\theta\in\mathbb R.
 \end{equation}
  To finish the proof, it suffices to show that $\mathbf q=\pm \mathbf p$ under the condition $\beta\neq 0.$
Suppose by contradiction that $\mathbf q\neq\pm \mathbf p$.
For any $\mathbf x\in\mathbb S^2$, the following Rodrigues's rotation formula holds:
\begin{equation}\label{rrf0}
\mathbf p\cdot(\mathsf R^{\mathbf q}_\theta\mathbf x)=(\cos\theta) \mathbf p\cdot\mathbf x +\sin\theta\mathbf p\cdot (\mathbf q\times \mathbf x)+(1-\cos\theta)(\mathbf q\cdot\mathbf x)(\mathbf p\cdot\mathbf q).
\end{equation}
If $\mathbf p\cdot (\mathbf q\times \mathbf x)\neq 0,$
then it is easy to check that
\begin{equation}\label{fft1}
 \max_{\theta\in\mathbb R}[\beta\mathbf p\cdot(\mathsf R^{\mathbf q}_\theta\mathbf x)-\beta\mathbf p\cdot\mathbf x]>0,\quad \min_{\theta\in\mathbb R}[\beta\mathbf p\cdot(\mathsf R^{\mathbf q}_\theta\mathbf x)-\beta\mathbf p\cdot\mathbf x]<0,
\end{equation}
which in combination with  \eqref{rzta2} implies that  $g(\mathcal G\zeta+\beta\mathbf p\cdot\mathbf x)$ remains constant for a sufficiently small change in the value of $\mathbf x$. In other words,
 $g(\mathcal G\zeta+\beta\mathbf p\cdot\mathbf x)$, and thus $\zeta,$ is constant in some neighborhood of $\mathbf x$ as long as $\mathbf p\cdot (\mathbf q\times \mathbf x)\neq 0.$ As a result, $\zeta$ must be constant in the  following two connected components of $\mathbb S^2$:
 \[\{\mathbf x\in\mathbb S^2\mid \mathbf p\cdot (\mathbf q\times \mathbf x)>0\},\quad \{\mathbf x\in\mathbb S^2\mid \mathbf p\cdot (\mathbf q\times \mathbf x)<0\}.\]
  On the other hand, recalling that $\zeta$ is rotationally invariant around $\mathbf q,$ we deduce that $\zeta$ is constant on the whole sphere. Taking into account the  zero mean condition
 \[\int_{\mathbb S^2}\zeta \dd{\sigma}=0,\]
  we get $\zeta\equiv 0.$ This is a contradiction to our assumption \eqref{qdp1}.

 \end{proof}

\begin{proof}[Proof of Theorem \ref{rgr}(ii)]

 As in the proof of (i),
 we can assume, without loss of generality, that $\mathbf m(\zeta)\neq\mathbf 0$ (since otherwise $\zeta\equiv 0$). Denote
 \begin{equation}\label{qd0i}
 \mathbf q:=\frac{\mathbf m(\zeta)}{|\mathbf m(\zeta)|}.
 \end{equation}
As in \eqref{skl1},   $\zeta\circ\mathsf R^{\mathbf q}_\theta$  is a maximizer of
   $E$ relative to $\mathcal R_\zeta\cap\mathcal C_\zeta$ for any $\theta\in\mathbb R$. Hence by Proposition \ref{ps42},
   \begin{equation}\label{abrt}
   \zeta\circ\mathsf R^{\mathbf q}_\theta-\zeta\in\mathbb E_2\quad\forall\,\theta\in\mathbb R.
   \end{equation}

  To prove the decomposition \eqref{decp}, we introduce the spherical coordinates $(u,v)$ on $\mathbb S^2$:
  \[\begin{cases}
  x_1=\cos v\cos u,\\
  x_2=\cos v\sin u,\\
  x_3=\sin v,
  \end{cases}\quad -\pi<u<\pi,\quad -\frac{\pi}{2}<v<\frac{\pi}{2}.
  \]
For simplicity, we only consider the special case $\mathbf q=\mathbf e_3$ (the general case can be proved in an analogous manner).  In this case, \eqref{abrt} becomes
\[\zeta(u+\theta, v)-\zeta(u, v)\in\mathbb E_2\quad\forall\,\theta\in\mathbb R,\]
from which we deduce that $\partial_u\zeta\in\mathbb E_2.$ On the other hand, in spherical coordinates,
\[\mathbb E_2= \mathrm{span}\{\cos^2v\sin(2u),\  \sin(2v)\cos u,\  \sin(2v)\sin u, \ \cos^2v\cos(2u), \ 3\sin^2v-1\}.\]
So
\[\partial_u\zeta=a\cos^2v\sin(2u)+b\sin(2v)\cos u+c\sin(2v)\sin u+d \cos^2v\cos(2u)+e(3\sin^2v-1)\]
for some $a,b,c,d,e\in\mathbb R,$
which further implies that
\[\zeta=-\frac{1}{2}a\cos^2v\cos(2u)+b\sin(2v)\sin u-c\sin(2v)\cos u+\frac{1}{2}d \cos^2v\sin(2u)+e(3\sin^2v-1)u+\zeta_z.\]
where $\zeta_z$ depends only on $v$. To ensure periodicity, it is necessary that $e=0.$ The desired decomposition then follows by taking
\[\zeta_e=-\frac{1}{2}a\cos^2v\cos(2u)+b\sin(2v)\sin u-c\sin(2v)\cos u+\frac{1}{2}d \cos^2v\sin(2u)\in\mathbb E_2.\]

   To finish the proof, we only need to show that $\mathbf q=\pm\mathbf p$ if $\beta\neq 0.$
   Repeating the argument as in
  \eqref{sfyse1},  we deduce that $\zeta\circ \mathsf R^{\mathbf q}_\theta$  satisfies
  \[\zeta\circ \mathsf R^{\mathbf q}_\theta=g(\mathcal G\zeta\circ \mathsf R^{\mathbf q}_\theta+\beta\mathbf p\cdot\mathbf x)\quad\forall\,\theta\in\mathbb R,\]
  which  yields
  \[\zeta =g(\mathcal G\zeta +\beta\mathbf p\cdot(\mathsf R^{\mathbf q}_\theta\mathbf x))\quad\forall\,\theta\in\mathbb R.\]
Taking into account   \eqref{nbv2}, we obtain
   \[g(\mathcal G\zeta+\beta\mathbf p\cdot\mathbf x)=g(\mathcal G\zeta+\beta\mathbf p\cdot(\mathsf R^{\mathbf q}_\theta\mathbf x))\quad\forall\,\theta\in\mathbb R.\]
   The remaining proof is almost identical to that  of  (i).

\end{proof}

\begin{proof}[Proof of Theorem \ref{rgr}(iii)]
Just repeat the argument  in the proof of  (i), with the  only modifications being the replacement of \eqref{rzta1} and  \eqref{fft1} by
 \[ \zeta=\zeta\circ\mathsf R^{\mathbf q}_\theta  \quad\forall\, |\theta|<\theta_0,\]
 \[ \max_{|\theta|<\theta_0}\left[\beta\mathbf p\cdot(\mathsf R^{\mathbf q}_\theta\mathbf x))-\beta\mathbf p\cdot\mathbf x\right]>0,\quad \min_{|\theta|<\theta_0}\left[\beta\mathbf p\cdot(\mathsf R^{\mathbf q}_\theta\mathbf x))-\beta\mathbf p\cdot\mathbf x\right]<0,\]
 respectively, where $\theta_0$ is a small positive number.
\end{proof}

\section{Rotating sphere}\label{sc6}

As we have mentioned in Section \ref{sec1}, if $\zeta$ solves \eqref{seee}, then $\zeta(\mathbf R^{\mathbf p}_{\beta t}\mathbf R^{\mathbf e_3}_{\Omega t}\mathbf x)$ is a solution of $(V_\Omega)$. In this section, we discuss the stability of such $\zeta$ under the dynamics of $(V_\Omega)$  based on the relation \eqref{v0v}.

For any function $v:\mathbb S^2\mapsto \mathbb R$, denote by ${\mathbb H}_v$ the set of all rigid rotations of $v$ about the polar axis, i.e.,
\[{\mathbb H}_v:=\left\{ v\circ\mathsf R\mid \mathsf R\in\mathbb H\right\},\]
where $\mathbb H$ is defined by \eqref{dfh1}.  Let $\mathcal M$ be a set of functions on $\mathbb S^2,$
 denote
 \[\mathbb H_{\mathcal M}:=\cup_{u\in\mathcal M}\mathbb H_u.\]

\begin{proposition}\label{relas}
Let $1<p<+\infty.$ Suppose that $\mathcal M\subset\mathring L^p(\mathbb S^2)$ is an $L^p$-stable set   under the dynamics of  $(V_0)$,
i.e., for any $\varepsilon>0,$ there exists some $\delta>0$, such that for any smooth solution $\zeta(t,\mathbf x)$ of $(V_0)$, it holds that
     \begin{equation}\label{scm501}
     \min_{v\in\mathcal M }\left\|\zeta(0,\cdot)-v\right\|_{L^p(\mathbb S^2)}<\delta\quad\Longrightarrow\quad  \min_{v\in\mathcal M }\left\|\zeta(t,\cdot)-v\right\|_{L^p(\mathbb S^2)}<\varepsilon\quad\forall\,t\in\mathbb R.
     \end{equation}
Then   $\mathbb H_{\mathcal M}$   is an $L^p$-stable set  under the dynamics of  $(V_\Omega)$.
\end{proposition}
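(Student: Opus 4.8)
The plan is to transfer the assumed $(V_0)$-stability of $\mathcal M$ to $(V_\Omega)$-stability of $\mathbb H_{\mathcal M}$ through the solution correspondence \eqref{v0v}, exploiting the fact that the whole construction involves \emph{only} rotations about the polar axis $\mathbf e_3$, under which the target set $\mathbb H_{\mathcal M}$ is invariant. Throughout I write $\rho_{\mathcal A}(f):=\inf_{w\in\mathcal A}\|f-w\|_{L^p(\mathbb S^2)}$ for the $L^p$-distance from $f$ to a set $\mathcal A$. Since $d\sigma$ is rotation invariant, a change of variables gives $\|f\circ\mathsf R-g\|_{L^p(\mathbb S^2)}=\|f-g\circ\mathsf R^{-1}\|_{L^p(\mathbb S^2)}$ for every $\mathsf R\in\mathbb S\mathbb O(3)$. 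Because $\mathbb H$ is a group and $\mathbb H_{\mathcal M}=\{u\circ\mathsf R\mid u\in\mathcal M,\ \mathsf R\in\mathbb H\}$, right composition by any $\mathsf H\in\mathbb H$ permutes $\mathbb H_{\mathcal M}$; combined with the previous identity this yields the two facts I will rely on,
\begin{equation}\label{relplan1}
\rho_{\mathbb H_{\mathcal M}}(f\circ\mathsf H)=\rho_{\mathbb H_{\mathcal M}}(f),\qquad \rho_{\mathbb H_{\mathcal M}}(f)=\inf_{\mathsf H\in\mathbb H}\rho_{\mathcal M}(f\circ\mathsf H),
\end{equation}
valid for all $f\in\mathring L^p(\mathbb S^2)$ and $\mathsf H\in\mathbb H$. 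Note also that $\rho_{\mathbb H_{\mathcal M}}\le\rho_{\mathcal M}$, since $\mathcal M\subset\mathbb H_{\mathcal M}$.

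Next, given $\varepsilon>0$, let $\delta>0$ be the number supplied by the assumed $(V_0)$-stability \eqref{scm501} of $\mathcal M$. Let $\eta_t$ be any smooth solution of $(V_\Omega)$ with $\rho_{\mathbb H_{\mathcal M}}(\eta_0)<\delta$. By the second identity in \eqref{relplan1} I may fix an $\mathsf H_0\in\mathbb H$ (depending only on the initial datum) with $\rho_{\mathcal M}(\eta_0\circ\mathsf H_0)<\delta$, and then set
\[
\zeta_t(\mathbf x):=\eta_t\big(\mathsf R^{\mathbf e_3}_{-\Omega t}\mathsf H_0\mathbf x\big).
\]
By \eqref{v0v} the map $\eta_t\circ\mathsf R^{\mathbf e_3}_{-\Omega t}$ is a smooth solution of $(V_0)$, and composing further with the fixed rotation $\mathsf H_0\in\mathbb H\subset\mathbb S\mathbb O(3)$ keeps it a solution of $(V_0)$ by the invariance (S1); hence $\zeta_t$ is a smooth solution of $(V_0)$. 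Since $\mathsf R^{\mathbf e_3}_{0}=\mathrm{Id}$ we have $\zeta_0=\eta_0\circ\mathsf H_0$, so $\rho_{\mathcal M}(\zeta_0)<\delta$, and \eqref{scm501} gives $\rho_{\mathcal M}(\zeta_t)<\varepsilon$ for all $t>0$.

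Finally I transfer this estimate back to $\eta_t$. For each $t$ the rotation $\mathsf R^{\mathbf e_3}_{-\Omega t}\mathsf H_0$ lies in $\mathbb H$, and $\zeta_t\circ(\mathsf R^{\mathbf e_3}_{-\Omega t}\mathsf H_0)^{-1}=\eta_t$, so by the first identity in \eqref{relplan1},
\[
\rho_{\mathbb H_{\mathcal M}}(\eta_t)=\rho_{\mathbb H_{\mathcal M}}\big(\zeta_t\circ(\mathsf R^{\mathbf e_3}_{-\Omega t}\mathsf H_0)^{-1}\big)=\rho_{\mathbb H_{\mathcal M}}(\zeta_t)\le\rho_{\mathcal M}(\zeta_t)<\varepsilon\qquad\forall\,t>0,
\]
which is exactly the claimed $(V_\Omega)$-stability of $\mathbb H_{\mathcal M}$.

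The argument is largely bookkeeping, and the one point that must be got exactly right — the crux — is the invariance $\rho_{\mathbb H_{\mathcal M}}(f\circ\mathsf H)=\rho_{\mathbb H_{\mathcal M}}(f)$ for $\mathsf H\in\mathbb H$, the first identity in \eqref{relplan1}. It is this that renders the time-dependent rotating frame $\mathsf R^{\mathbf e_3}_{-\Omega t}$ harmless: the distance to $\mathbb H_{\mathcal M}$ simply does not see a polar-axis rotation, so no loss is incurred when passing between the two frames. The only subtleties to watch are the direction of the change of variables ($\mathsf R$ versus $\mathsf R^{-1}$) in the $L^p$ identities, and the fact that $\mathsf H_0$ is chosen once from the initial datum and held fixed, so that $\zeta_t$ is a single genuine solution of $(V_0)$ rather than an unrelated $t$-dependent family; smoothness of $\zeta_t$ is automatic since it is obtained from the smooth $\eta_t$ by composition with rotations depending smoothly on $(t,\mathbf x)$.
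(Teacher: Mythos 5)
Your proof is correct, but it follows a genuinely different route from the paper's. The paper proceeds by a sequential compactness argument: it takes a sequence of $(V_\Omega)$-solutions with $\zeta^n_0\to\xi\in\mathcal M$ and arbitrary times $t_n$, converts them to $(V_0)$-solutions $\zeta^n_t\circ\mathsf R^{\mathbf e_3}_{-\Omega t}$ via \eqref{v0v}, uses the assumed stability of $\mathcal M$ to extract a subsequence with $\zeta^n_{t_n}\circ\mathsf R^{\mathbf e_3}_{-\Omega t_n}\to\tilde\zeta\in\mathcal M$, and then undoes the frame change by passing to a further convergent subsequence of the rotations $\mathsf R^{\mathbf e_3}_{\Omega t_n}$; this last step relies on compactness of the circle group $\mathbb H$ and the $L^p$-continuity of the rotation action, and both extraction steps tacitly use compactness of $\mathcal M$ --- harmless in the applications, where $\mathcal M$ is a singleton or a compact set, but not among the stated hypotheses of the proposition. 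You instead give a direct $\varepsilon$--$\delta$ transfer: the invariance of the $L^p$-distance to $\mathbb H_{\mathcal M}$ under right composition with elements of $\mathbb H$ (your first identity) makes the time-dependent frame change $\mathsf R^{\mathbf e_3}_{-\Omega t}$ exactly invisible to the quantity being estimated, while the single rotation $\mathsf H_0$, fixed once from the initial datum, aligns the perturbation with $\mathcal M$ so that the $(V_0)$-stability hypothesis applies verbatim to the genuine $(V_0)$-solution $\zeta_t=\eta_t\circ(\mathsf R^{\mathbf e_3}_{-\Omega t}\mathsf H_0)$. What your approach buys: no compactness of $\mathcal M$ or of $\mathbb H$, no subsequences, and a quantitative conclusion (the same $\delta$ works for $(V_\Omega)$ as for $(V_0)$); what the paper's buys is stylistic consistency with the sequential-compactness arguments of its Section 3, at the cost of the implicit assumptions just noted. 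Both proofs share the same two structural inputs --- the correspondence \eqref{v0v} and the $\mathbb H$-invariance of the construction --- so the difference is in how the rotating frame is absorbed, not in the underlying mechanism.
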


\begin{proof}
  It suffices to show that for any sequence of solutions $\{\zeta^n\}$ of $(V_\Omega)$ and any sequence of times $\{t_n\}\subset\mathbb R$, if $\zeta^n(0,\cdot)$ converges to some $\xi\in \mathcal M$ in $L^p(\mathbb S^2),$ then, up to a subsequence, $\zeta^n(t_n,\cdot)$ converges to some $\eta\in \mathcal M$ in $L^p(\mathbb S^2).$
  Recalling \eqref{v0v}, we know that $\zeta^n(t, \mathbf R^{\mathbf e_3}_{-\Omega t}\mathbf x)$ is a solution of $(V_0)$. Since $\mathcal M$ is $L^p$-stable under the dynamics of $(V_0),$    it holds that, up to a subsequence,
      \[\left\|\zeta^n (t_n, \mathbf R^{\mathbf e_3}_{-\Omega t_n}\cdot)- \tilde \zeta \right\|_{L^p(\mathbb S^2)}\to 0 \]
      for some $\tilde\zeta\in\mathcal M,$
      which yields
      \[\left\|\zeta^n(t_n,\cdot)- \tilde \zeta ( \mathbf R^{\mathbf e_3}_{\Omega t_n}\cdot)\right\|_{L^p(\mathbb S^2)}\to0.\]
The desired result follows from the observation that  $\tilde \zeta ( \mathbf R^{\mathbf e_3}_{\Omega t_n}\mathbf x)$ converges to some $\eta\in \mathbb H_{\tilde\zeta}\subset\mathbb H_{\mathcal M}$.

\end{proof}

With Proposition \ref{relas}, we can discuss the stability of steady or rotating Euler flows related to \eqref{seee} under the dynamics of $(V_\Omega)$ for general $\Omega\in\mathbb R.$

\begin{theorem}
Let $1<p<\infty.$ Suppose that $\zeta\in \mathring L^p(\mathbb S^2)$ satisfies
 \begin{equation*}
\zeta= g(\mathcal G\zeta+\beta{\mathbf p}\cdot\mathbf x)\quad{\rm a.e.\, \,on } \,\,\mathbb S^2,
 \end{equation*}
for some function $g:\mathbb R\to\mathbb R\cup\{\pm \infty\}$, some unit vector  $\mathbf p\in\mathbb R^3$ and some $\beta\in\mathbb R.$
\begin{itemize}
  \item [(i)]If $g$ is decreasing, then $\mathbb H_\zeta$  is  an  $L^p$-stable  set under the dynamics of $(V_\Omega)$.  Moreover, if additionally $\beta\neq 0$ and $\mathbf p\parallel\mathbf e_3,$ then $\mathbb H_\zeta=\{\zeta\}$, and thus $\zeta$  is    $L^p$-stable   under the dynamics of $(V_\Omega)$.
  \item  [(ii)]   If $g\in C^1(\mathbb R)$ satisfies
 \[ 0\leq  g'\leq 6,\]
 then
  $\mathbb H_{(\zeta+\mathbb E_2)\cap \mathcal R_{\zeta}}$   is an    $L^p$-stable  set under the dynamics of $(V_\Omega)$.
     Moreover, if  additionally $\beta\neq 0$ and $\mathbf p\parallel\mathbf e_3,$ then
     \[\mathbb H_{(\zeta+\mathbb E_2)\cap \mathcal R_{\zeta}}= (\zeta+\mathbb E_2)\cap \mathcal R_{\zeta},\] and thus $ (\zeta+\mathbb E_2)\cap \mathcal R_{\zeta}$  is    an    $L^p$-stable  set under the dynamics of $(V_\Omega)$.
  \item  [(iii)] If $g\in C^1(\mathbb R)$ is increasing and $-\Delta-g'(\mathcal G \zeta+\beta{\mathbf p}\cdot\mathbf x)>0$ on $\mathbb E_1^\perp,$ then $\mathbb H_\zeta$  is  an  $L^p$-stable  set under the dynamics of $(V_\Omega)$.
 Moreover, if   additionally  $\beta\neq 0$ and $\mathbf p\parallel\mathbf e_3,$ then $\mathbb H_\zeta=\{\zeta\}$, and thus $\zeta$  is    $L^p$-stable   under the dynamics of $(V_\Omega)$.
\end{itemize}

\end{theorem}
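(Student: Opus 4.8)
The plan is to reduce each of the three cases to the corresponding $L^p$-stability statement under $(V_0)$ proved earlier in the paper, and then to lift it to $(V_\Omega)$-stability by a single application of Proposition~\ref{relas}. For each item I first exhibit a set $\mathcal M\subset\mathring L^p(\mathbb S^2)$ that is $L^p$-stable under $(V_0)$. In cases (i) and (iii) I take $\mathcal M=\{\zeta\}$: its $(V_0)$-stability is precisely Theorem~\ref{a1s} and Theorem~\ref{a2s3}, respectively, and for a singleton the set-stability \eqref{scm501} coincides with the pointwise stability \eqref{scmii1}. In case (ii) I take $\mathcal M=(\zeta+\mathbb E_2)\cap\mathcal R_\zeta$, whose $(V_0)$-stability is exactly the reformulation of Theorem~\ref{a2s2} recorded in Remark~\ref{rkk8}. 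Proposition~\ref{relas} then yields at once that $\mathbb H_{\mathcal M}$ is $L^p$-stable under $(V_\Omega)$, which is the first assertion in each item.

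For the ``moreover'' clauses I would invoke the rigidity theorem to collapse the orbit $\mathbb H_{\mathcal M}$ back onto $\mathcal M$ under the additional hypotheses $\beta\neq0$ and $\mathbf p\parallel\mathbf e_3$. In cases (i) and (iii), Theorem~\ref{rgr}(i) and Theorem~\ref{rgr}(iii) give $\mathbf q=\mathbf p=\pm\mathbf e_3$ together with the rotational invariance $\zeta=\zeta\circ\mathsf R^{\mathbf q}_\theta$ for all $\theta$. Since $\mathbb H$ consists exactly of the rotations about the polar axis, this means $\zeta\circ\mathsf H=\zeta$ for every $\mathsf H\in\mathbb H$, so $\mathbb H_\zeta=\{\zeta\}$ and the stability of the individual flow $\zeta$ under $(V_\Omega)$ follows.

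The only step requiring genuine argument, and thus the main obstacle, is the ``moreover'' part of case (ii): proving $\mathbb H_{(\zeta+\mathbb E_2)\cap\mathcal R_\zeta}=(\zeta+\mathbb E_2)\cap\mathcal R_\zeta$, that is, that this set is invariant under rotations about the polar axis. I would fix $\eta\in(\zeta+\mathbb E_2)\cap\mathcal R_\zeta$, write $\eta=\zeta+Y$ with $Y\in\mathbb E_2$, and take $\mathsf H\in\mathbb H$, so that $\eta\circ\mathsf H=\zeta\circ\mathsf H+Y\circ\mathsf H$. Because $\mathbb E_2$ is an eigenspace of the rotation-invariant Laplace--Beltrami operator, it is preserved by $\mathbb H$, whence $Y\circ\mathsf H\in\mathbb E_2$; and Theorem~\ref{rgr}(ii) with $\mathbf q=\mathbf p=\pm\mathbf e_3$ gives $\zeta\circ\mathsf H-\zeta\in\mathbb E_2$, so $\zeta\circ\mathsf H\in\zeta+\mathbb E_2$. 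Adding these shows $\eta\circ\mathsf H\in\zeta+\mathbb E_2$, while $\eta\circ\mathsf H\in\mathcal R_\eta=\mathcal R_\zeta$ because composition with a rotation preserves the distribution function; hence $\eta\circ\mathsf H\in(\zeta+\mathbb E_2)\cap\mathcal R_\zeta$, giving $\mathbb H_{\mathcal M}\subset\mathcal M$. The reverse inclusion is immediate since the identity lies in $\mathbb H$. With this invariance in hand, $\mathbb H_{\mathcal M}=\mathcal M$, and Proposition~\ref{relas} delivers the $(V_\Omega)$-stability of $(\zeta+\mathbb E_2)\cap\mathcal R_\zeta$ itself.
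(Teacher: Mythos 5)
Your proposal is correct and follows exactly the route the paper intends (the paper states this theorem without explicit proof, immediately after Proposition \ref{relas}): apply Proposition \ref{relas} to the $(V_0)$-stable sets furnished by Theorem \ref{a1s}, Remark \ref{rkk8} (Theorem \ref{a2s2}), and Theorem \ref{a2s3}, then invoke Theorem \ref{rgr} with $\mathbf q=\mathbf p=\pm\mathbf e_3$ for the ``moreover'' clauses. Your verification of the $\mathbb H$-invariance of $(\zeta+\mathbb E_2)\cap\mathcal R_\zeta$ in case (ii) --- combining $\zeta\circ\mathsf H-\zeta\in\mathbb E_2$ from Theorem \ref{rgr}(ii) with the rotation-invariance of the eigenspace $\mathbb E_2$ and of rearrangement classes --- correctly supplies the one detail the paper leaves to the reader.
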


\bigskip

 \noindent{\bf Acknowledgements:}  We are very grateful to the two anonymous referees for their valuable comments and suggestions.
 D. Cao was supported by National Key R\&D Program (Grant 2023YFA1010001) and NNSF of China (Grant
12371212).  G. Wang was supported by NNSF of China (Grant 12471101) and  Fundamental Research Funds for the Central Universities (Grant DUT23RC(3)077).

\bigskip
\noindent{\bf  Data Availability} Data sharing not applicable to this article as no datasets were generated or analyzed during the current study.

\bigskip
\noindent{\bf Declarations}

\bigskip
\noindent{\bf Conflict of interest}  The authors declare that they have no conflict of interest to this work.

\phantom{s}
 \thispagestyle{empty}


\begin{thebibliography}{99}


\bibitem{A1}
Arnold V.I.:  Conditions for nonlinear stability plane curvilinear flow of an idea fluid.
\textit{Sov. Math. Dokl.}  {\bf 6}, 773--777, 1965


\bibitem{A2}
Arnold V.I.:  On an a priori estimate in the theory of hydrodynamical stability. \textit{Amer. Math. Soc. Transl.}  {\bf 79}, 267--269, 1969

\bibitem{A3}
Arnold V.I., Khesin B.A.: \textit{Topological Methods in Hydrodynamics}.   Springer, Cham,  2021

\bibitem{Baines}
Baines P.G.: The stability of planetary waves on a sphere. \textit{J. Fluid Mech.}  {\bf 73}, 193--213,  1976




\bibitem{BKM} Beale J.,   Kato T., Majda  A.:   Remarks on the breakdown of smooth solutions for the 3-D Euler equations. \textit{Comm. Math. Phys.} {\bf94}, 61--66, 1984

\bibitem{Benard}
B\'enard P.:  Stability of Rossby-Haurwitz waves. \textit{Quart. J. R. Met. Soc.} {\bf146}, 613--628,  2020


\bibitem{BMA}
Burton G.R.: Rearrangements of functions, maximization of convex functionals, and vortex rings.  \textit{Math. Ann.} {\bf276}, 225--253,  1987
\bibitem{BHP}
Burton G.R.: Variational problems on classes of rearrangements and multiple configurations for steady vortices. \textit{Ann. Inst. H. Poincar\'e Anal. Non Lin\'eaire} {\bf6}, 295--319,  1989
\bibitem{Bata}
Burton G.R.: Rearrangements of functions, saddle points and uncountable families of steady configurations for a vortex. \textit{Acta Math.} {\bf163}, 291--309, 1989


\bibitem{BAR}
Burton G.R.: Global nonlinear stability for steady ideal fluid flow in bounded planar domains. \textit{Arch. Ration. Mech. Anal.} {\bf 176}, 149--163, 2005
\bibitem{BM}
Burton G.R.,  McLeod J.B.:
Maximisation and minimisation on classes of rearrangements.
\textit{Proc. Roy. Soc. Edinburgh Sect. A}  {\bf 119},  287--300, 1991


 \bibitem{BRy}
 Burton G.R., Ryan  E.P.: On reachable sets and extremal rearrangements of control functions. \textit{SIAM J. Control Optim.} {\bf 26}, 1481--1489, 1988





\bibitem{CWCV}
 Cao D., Wang G.: Steady vortex patches with opposite rotation directions in a planar ideal fluid. \textit{Calc. Var. Partial Differential Equations} {\bf 58},  Paper No.  75, 2019


\bibitem{CWN}
 Cao D., Wang G.:  Nonlinear stability of planar vortex patches in an ideal fluid. \textit{J. Math. Fluid Mech.} {\bf23}, Paper No.  58, 2021

\bibitem{CWZ}
 Cao D., Wang G., Zuo B.:  Stability of degree-2 Rossby-Haurwitz waves. arXiv:2305.03279
\bibitem{Cap}
Caprino S.,  Marchioro C.:  On nonlinear stability of stationary Euler flows on a rotating sphere. \textit{J. Math. Anal. Appl.} {\bf129},  24--36, 1988


\bibitem{CDG}
Constantin P., Drivas T., Ginsberg D.: Flexibility and rigidity in steady fluid motion. \textit{Comm. Math. Phys.} {\bf 385}, 521--563,   2021.

\bibitem{CG}
Constantin A., Germain P.:  Stratospheric planetary flows from the perspective of the Euler equation on a rotating sphere. \textit{Arch. Ration. Mech. Anal.} {\bf245}, 587--644, 2022




\bibitem{Haur}
Haurwitz B.:  The motion of atmospheric disturbances on the spherical earth. \textit{J. Mar. Res.} {\bf3}, 254--267,  1940



 \bibitem{Hos}
Hoskins  B.J.: Stability of the Rossby-Haurwitz wave. \textit{Quart. J. R. Met. Soc.} {\bf99}, 723--745, 1973

 \bibitem{Hos2}
Hoskins B.J.,  Hollingsworth A.:  On the simplest example of the barotropic instability of Rossby wave motion. \textit{J. Atmos. Sci.}  {\bf30}, 150--153,  1973



\bibitem{Lorenz}
Lorenz E.N.:  Barotropic instability of Rossby wave motion. \textit{J. Atmos. Sci.} {\bf29}, 258--264,  1972

\bibitem{MB}
Majda A., Bertozzi  A.: \textit{Vorticity and Incompressible Flow}.  Cambridge University Press, Cambridge, 2002



\bibitem{MP}
Marchioro C., Pulvirenti M.: \textit{Mathematical Theory of Incompressible Noviscous Fluids}.  Springer, New York, 1994








\bibitem{Ross}
Rossby C.G.:  Relations between variations in the intensity of the zonal circulation of the atmosphere and the displacements of the semipermanent centers of action. \textit{J. Mar. Res.} {\bf2}, 38--55,   1939

\bibitem{Rudin}
Rudin W.: \textit{Functional Analysis.} Second edition. International Series in Pure and Applied Mathematics. McGraw-Hill, New York, 1991.

\bibitem{Skiba0}
Skiba Y.N.:  On the normal mode instability of harmonic waves on a sphere. \textit{Geophys. Astrophys. Fluid Dyn.} {\bf92}, 115--127,  2000


\bibitem{Skiba1}
Skiba Y.N.:  On the spectral problem in the linear stability study of flows on a sphere.  \textit{J. Math. Anal. Appl.}  {\bf270}, 165--180,  2002


\bibitem{Skiba2}
Skiba Y.N.:  On Liapunov and exponential stability of Rossby-Haurwitz waves in invariant sets of perturbations. \textit{J. Math. Fluid Mech.} {\bf20},  1137--1154, 2018



\bibitem{T}
 Taylor M.:  Euler equation on a rotating surface. \textit{J. Funct. Anal.}  {\bf270},   3884--3945, 2016



 \bibitem{WGu1}
Wang G.: Nonlinear stability of planar steady Euler flows associated with semistable solutions of elliptic problems. \textit{Trans. Amer. Math. Soc.} {\bf375},   5071--5095, 2022



  \bibitem{Wang}
Wang G.:  Stability of two-dimensional steady Euler flows with concentrated vorticity.   \textit{Math. Ann.} {\bf389}, 121–168, 2023


\bibitem{WZ0}
 Wang G.,  Zuo B.: Stability of sinusoidal Euler flows on a flat two-torus. \textit{Calc. Var. Partial Differential Equations} {\bf62}, Paper No. 207,  2023


 \bibitem{WZ}
 Wang   G.,  Zuo B.: An extension of Arnold’s second stability theorem in a multiply-connected domain.  \textit{Calc. Var. Partial Differential Equations} {\bf 64},  Paper No. 302, 2025



\bibitem{WS}
Wirosoetisno D.,   Shepherd T.G.:  Nonlinear stability of Euler flows in two-dimensional periodic domains. \textit{ Geophys. Astrophys. Fluid Dynam.} {\bf90}, 229--246, 1999

\bibitem{WG1}
Wolansky G., Ghil M.: An extension of Arnold's second stability theorem for the Euler equations.  \textit{Phys. D} {\bf94}, 161--167, 1996


\bibitem{WG2}
Wolansky G., Ghil M.:
Nonlinear stability for saddle solutions of ideal flows and symmetry breaking. \textit{Comm. Math. Phys.} {\bf193}, 713--736,  1998




\end{thebibliography}
\end{document}